\DeclareMathOperator{\End}{End}
\DeclareMathOperator{\tr}{Tr}
\DeclareMathOperator{\ind}{ind}
\DeclareMathOperator{\Td}{Td}
\DeclareMathOperator{\Id}{Id}
\DeclareMathOperator{\Spin}{Spin}
\DeclareMathOperator{\U}{U}
\DeclareMathOperator{\SU}{SU}
\DeclareMathOperator{\Map}{Map} 
\DeclareMathOperator{\SW}{SW}
\newcommand{\R}{\mathbb R}
\newcommand{\C}{\mathbb C}
\newcommand{\N}{\mathbb N}
\newcommand{\G}{\mathcal G}
\newcommand{\A}{\mathcal A}
\newcommand{\E}{\mathcal E}
\newcommand{\Spinc}{\Spin^c}
\newcommand{\diff}{\mathrm{d}}
\newcommand{\dvol}{\mathrm{dvol}}
\newcommand{\su}{\mathfrak{su}}
\newcommand{\odd}{\mathrm{odd}}
\newcommand{\even}{\mathrm{even}}
\renewcommand{\mod}{\text{\rm mod}\,}
\renewcommand{\P}{\mathbb P}
\renewcommand{\u}{\mathfrak{u}}
\renewcommand{\tilde}{\widetilde}
\theoremstyle{plain}
	\newtheorem{theorem}{Theorem}
	\newtheorem{proposition}[theorem]{Proposition}
	\newtheorem{lemma}[theorem]{Lemma}
	\newtheorem{corollary}[theorem]{Corollary}
\theoremstyle{definition}
	\newtheorem{definition}[theorem]{Definition}
	\newtheorem{remark}[theorem]{Remark}
	\newtheorem{remarks}[theorem]{Remarks}
\theoremstyle{plain}
	\newtheorem*{theorem*}{Theorem}
	\newtheorem*{proposition*}{Proposition}
	\newtheorem*{lemma*}{Lemma}
	\newtheorem*{corollary*}{Corollary}
	\newtheorem*{conjecture*}{Conjecture}
\theoremstyle{definition}
	\newtheorem*{definition*}{Definition}
	\newtheorem*{remark*}{Remark}
	\newtheorem*{remarks*}{Remarks}
\numberwithin{equation}{section}
\numberwithin{theorem}{section}
\begin{document}

\title{Seiberg--Witten equations in all dimensions}
\author{Joel Fine and Partha Ghosh}

\maketitle

\vfill

\begin{abstract}
Starting with an $n$-dimensional oriented Riemannian manifold with a $\Spinc$-struc\-ture, we describe an elliptic system of equations which recover the Seiberg--Witten equations when $n=3,4$. The equations are for a $\U(1)$-connection $A$ and spinor $\phi$, as usual, and also an odd degree form $\beta$ (generally of inhomogeneous degree). From $A$ and $\beta$ we define a Dirac operator $D_{A,\beta}$ using the action of $\beta$ and $*\beta$ on spinors (with carefully chosen coefficients)  to modify $D_A$. The first equation in our system is $D_{A,\beta}(\phi)=0$. The left-hand side of the second equation is the principal part of the Weitzenb\"ock remainder for $D^*_{A,\beta}D_{A,\beta}$. The equation sets this equal to $q(\phi)$, the trace-free part of projection against $\phi$, as is familiar from the cases $n=3,4$. In dimensions $n=4m$ and $n=2m+1$, this gives an elliptic system modulo gauge. To obtain a system which is elliptic modulo gauge in dimensions $n=4m+2$, we use two spinors and two connections, and so have two Dirac and two curvature equations, that are then coupled via the form $\beta$. We also prove a collection of a~priori estimates for solutions to these equations. Unfortunately they are not sufficient to prove compactness modulo gauge, instead leaving the possibility that bubbling may occur.
\end{abstract}

\vfill

\tableofcontents

\newpage

\section{Introduction}
Let $(M,g)$ be a Riemannian manifold of dimension $n$, which admits a $\Spinc$-structure. The aim of this article is to introduce an elliptic system of equations on $M$ which generalise the Seiberg--Witten equations in the cases $n=3,4$. (These equations were first written down, in 4-dimensions, by Seiberg and Witten \cite{Seiberg-Witten}.)

We begin by fixing notation. Write $S \to M^n$ for the spin bundle of the $\Spinc$-structure. When $n$ is even, $S =S_+ \oplus S_-$ splits into subbundles of positive and negative spinors. We write $c \colon \Lambda^* \to \End(S)$ for the Clifford action of differential forms on spinors. We follow the conventions of \cite{Friedrich}. In particular, (real) 1-forms act as skew-Hermitian endomorphisms. Meanwhile, in dimension $n=2m$, the volume form satisfies $i^mc(\dvol)) = \pm 1$ on $S_{\pm}$ whilst in dimension $n=2m-1$, $i^m c(\dvol) = 1$ on all of $S$. 

Let $L$ denote the line bundle associated to the $\Spinc$ structure. Recall that if two $\Spinc$ structures with spinor bundles $S,S'$ are related by $S' = S \otimes E$, for some line bundle $E$, then the associated line bundles satisfy $L' = L \otimes E^2$. Let $\A$ denote the set of unitary connections in $L$. A connection $A \in \A$ singles out a unitary connection $\nabla_A$ in $S$ and hence a Dirac operator $D_A$ on sections of $S$. . 

When $n$ is even, a spinor $\phi \in S_\pm$ defines a trace-free Hermitian endomorphism $E_\phi \colon S_\pm \to S_\pm$ via
\begin{equation}
E_\phi(\psi)= \left\langle \psi,\phi \right\rangle\phi - \frac{1}{r} |\phi|^2 \psi.
\label{Ephi}
\end{equation}
where $r$ is the rank of $S_\pm$. When $n$ is odd and $\phi \in S$ we use $E_\phi$ to denote the analogous trace-free endomorphism of $S$, where now $r$ is the rank of $S$. 

Before giving the $n$-dimensional version of the Seiberg--Witten equations, we first recall the three and four dimensional cases, highlighting the features we will generalise. In dimension~4, Clifford multiplication gives an isomorphism 
\begin{equation}\label{S20-in-dimension-4}
c \colon i \Lambda^2_+ \to i\su(S_+)
\end{equation}
between the imaginary self-dual 2-forms and trace-free Hermitian endomor\-phisms of $S_+$. Given $\phi \in \Gamma(S_+)$, we write $q(\phi) \in i\Omega^2_+$ for the imaginary self-dual 2-form corresponding to $E_\phi$ under~\eqref{S20-in-dimension-4}. The Seiberg--Witten equations for $A \in \A$ and $\phi \in \Gamma(S_+)$ are:
\begin{align}
D_A \phi &=0,\label{SW4-Dirac}\\
F_A^+ & = q(\phi)\label{SW4-curvature}.
\end{align}
The gauge group $\G = \Map(M, S^1)$ acts on $(A,\phi)$ by pull-back and this action preserves the space of solutions to the equations. 

The whole 4-dimensional story is now based on two crucial facts. Firstly, these equations are \emph{elliptic modulo gauge}. This is ultimately because the linearisation of the map $A \mapsto F_A^+$ combined with Coulomb gauge, to work modulo the gauge action, gives the truncated de~Rham complex, in the form
\[
 \Omega^1 \stackrel{\diff^+ + \diff^*}{\longrightarrow} \Omega^0 \oplus \Omega^2_+.
\]
Secondly, the space of solutions to the equations are \emph{compact modulo gauge}. This follows ultimately from the fact that the left-hand side of the curvature equation~\eqref{SW4-curvature} is directly related to the Weitzenb\"ock formula\footnote{The formula relating the Dirac Laplacian to the rough Laplacian is due to Schr\"odinger \cite{Schrodinger} in 1932, and was subsequently rediscovered by Lichnerowicz \cite{Lichnerowicz} in 1962.  The analogous formula relating a generalised Laplacian to a rough Laplacian is seemingly due to Weitzenb\"ock. Historically a better name for~\eqref{Weitzenbock-4D} would probably be the ``Schr\"odinger--Lichnerowicz formula'' but we follow a relatively common practice by calling this and similar equations ``Weitzenb\"ock formulae''.}:
\begin{equation}
D^2_A
-
\nabla_A^*\nabla_A 
	=
		\frac{s}{4}
		+
		\frac{1}{2} c(F_A^+),
\label{Weitzenbock-4D}
\end{equation}
where $s$ is the scalar curvature of $(M,g)$. 

To summarise: \emph{on a 4-manifold, using Dirac operators $D_A$ parametrised by $A \in \A$ ensures that prescribing the Weitzenb\"ock remainder gives an elliptic system modulo gauge.}

In dimension~3, meanwhile, Clifford multiplication gives an isomorphism 
\begin{equation}
c \colon i \Lambda^2 \to i\su(S)
\label{Clifford-isomorphism-3D}
\end{equation}
Given $\phi \in \Gamma(S)$, we now write $q(\phi) \in i \Omega^2$ for the imaginary 2-form corresponding to $E_\phi$ under~\eqref{Clifford-isomorphism-3D}. The 3-dimensional Seiberg--Witten equations for $A \in \A$, $\phi \in \Gamma(S)$ and $\beta \in \Omega^3$ are:
\begin{align}
\left(D_A + i c( *\beta) \right) \phi
	&= 0,
		\label{SW3-Dirac}\\
F_A - 2i \diff^* \beta
	&= q(\phi).
		\label{SW3-curvature}
\end{align}
One often sees these equations written with $\beta=0$. This is because when $\phi$ is not identically zero and $M$ is compact, one can check that the equations actually force $\beta=0$. It is also perhaps more common to see the equations with the 3-form $\beta$ replaced by the function $* \beta$. We choose to use $\beta$ as the variable because it fits more cleanly with our generalisation. 

The equations are elliptic modulo gauge because the linearisation of $(A,\beta) \mapsto F_A + i \diff^* \beta$, in combination with Coulomb gauge, produces the de~Rham complex, in the form 
\[
\Omega^1 \oplus \Omega^3 \stackrel{\diff + \diff^*}{\longrightarrow} \Omega^0 \oplus \Omega^2.
\]
Just as in the 4-dimensional case, the curvature equation~\eqref{SW3-curvature} is related to a Weitzenb\"ock remainder. This time
\begin{equation}
D_{A,\beta}^*D_{A,\beta} - \nabla^*_{A}\nabla_{A}
	=
		\frac{s}{4}
		+
		\frac{1}{2}c(F_A)
		-
		i \diff^*\beta
		+
		|\beta|^2.
\label{Weitzenbock-3D}
\end{equation}
Here $D_{A,\beta} = D_A + ic(*\beta)$ is the Dirac operator appearing in~\eqref{SW3-Dirac}. 

To summarise: \emph{on a 3-manifold, using Dirac operators $D_{A,\beta}$ parametrised by $A \in \A$ and $\beta \in\Omega^3$ ensures that prescribing the principal part of the Weitzenb\"ock remainder gives an elliptic system modulo gauge.}

These features are what we generalise to give Seiberg-Witten equations in arbitrary dimensions: we consider a family of Dirac operators $D_{A,\beta}$ parametrised by $A \in \A$ and an a certain choice of odd degree form $\beta$ (of inhomogeneous degree). The odd degree forms ensure that prescribing the principal part of the Weitzenb\"ock remainder of $D_{A,\beta}$ is an elliptic system, modulo gauge. When combined with the Dirac equation, the equations have the potential for good analytic properties. We will give some analytic results in this direction. We stress from the outset however that when $n >4$ our results are not sufficient to prove that the solution space is compact. Instead they leave open the possibility that ``bubbling'' can occur.

An obvious question is what purpose might these higher-dimensional Seiberg--Witten equations serve? In higher dimensions, there is no need for a gauge theoretic approach to study smooth structures since, for example, the $h$-cobordism theorem holds \cite{Smale}. Instead, one might speculate that higher dimensional Seiberg--Witten equations could prove useful when studying manifolds with geometric structures. This tentative line of thought is inspired by Taubes' work on symplectic 4-manifolds. Taubes proved that for a compact symplectic 4-manifold with $b_+>1$ the Seiberg--Witten invariant for the canonical $\Spinc$ structure is always 1 \cite{Taubes1}. This gives an obstruction to the existence of symplectic structures. There is no known obstruction in higher dimensions, beyond the most obvious that there must be a degree~2 cohomology class with non-zero top power. Even deeper is Taubes' Theorem that the Seiberg--Witten invariants are equal to the Gromov--Witten invariants \cite{Taubes2,Taubes3,Taubes4}. In particular, for a symplectic manifold with $b_+>1$, the canonical class is always represented by a $J$-holomorphic curve. In higher dimensions there are no known general existence results of this kind for $J$-holomorphic curves. It is, of course, very speculative to hope that these higher dimensional Seiberg--Witten equations could tell us something about higher dimensional symplectic manifolds (especially in light of the fact that the analysis appears much more complicated; see \S\ref{analysis}!), but at least it does not seem completely impossible. 

Dirac operators of the form $D +c(\beta)$ where $\beta \in \Omega^3$ has degree \emph{three} have appeared in several contexts, going back at least as far as Bismut's pioneering work on Dirac operators associated to metric connections with torsion \cite{Bismut}. This same paper also gives a Weitzenb\"ock formula which is a particular case of the various formulae proved here. Since Bismut's work there has been a huge amount of work on these particular Dirac operators; so much so that it is futile to give a survey here. To the best of our knowledge, however, there is only one paper which considers this kind of Dirac opertaor in the context of the Seiberg--Witten equations, namely the work \cite{Tanaka} of Tanaka. Tanaka  formulates a version of the Seiberg--Witten equations on a \emph{symplectic} 6-manifold, which have some similarity to the equations described here. To write down Tanaka's equations one must first pick an almost complex structure compatible with the symplectic form. This is in contrast to our equations which need nothing more than a Riemannian metric and a $\Spinc$-structure. There is an interesting point in common however: Tanaka perturbs the Dirac operator by adding a $(0,3)$-form to it; this is similar to the point of view taken here, where in 6-dimensions we also perturb the Dirac operator, this time by an arbitrary 3-form. 

The article is structured as follows. In the remaining sections ~\S\S\ref{SW-odd}--\ref{SW-4m-2} of this introduction, we describe the $n$-dimensional Seiberg--Witten equations. In~\S\ref{5D-examples} we give examples of solutions to the equations in dimension~5. In~\S\ref{elliptic} we show that the equations are elliptic, modulo gauge, and compute their index. In~\S\ref{Weitzenbock} we show that the curvature equation is precisely the principal part of the Weitzenb\"ock remainder term. In~\S\ref{analysis} we exploit this to prove some preliminary a~priori estimates on solutions to the equations. Finally, in \S\ref{energy-identity} we use the Weitzenb\"ock formula to show that solutions to the $n$-dimensional Seiberg--Witten equations are absolute minima of a natural energy functional (just as happens in dimensions~3 and~4). 

In one sequel to this article, the second author will give several more examples of solutions to the equations \cite{Ghosh}.  In another sequel we will describe modified versions of these Seiberg--Witten equations in dimensions 6, 7 and 8, which make sense on manifolds with $\SU(3)$, $G_2$ and $\Spin(7)$ structures respectively \cite{Fine-Ghosh-Ragini}.

\subsection*{Acknowledgements}

It is a pleasure to acknowledge conversations with Patrick Weber and Ralph Klaasse on earlier versions of these equations. We are also grateful to Chengjian Yao whose careful reading of a draft helped eliminate errors. JF was supported by FNRS grant PDR T.0082.21 and an ``Excellence of Science'' grant 4000725. PG was supported by FNRS grants FRIA-B1-40004195 and FRIA-B2-40014665.

\subsection{The Seiberg--Witten equations in odd dimensions}\label{SW-odd}

In odd dimensions, $n = 2m+1$, it is relatively straightforward to generalise the 3-dimensional story. Only the notation becomes more complicated. To ease things a little, we define a function $s \colon \N \to \{1,i\}$ by
\[
s_k = \begin{cases}
	1 & \text{if } k \equiv 0 \text{ or } 3\, \mod 4\\
	i & \text{if } k \equiv 1 \text{ or } 2\, \mod 4	
	\end{cases}
\]
The point of this definition is that if $\beta_k \in \Omega^k$  then $s_k c(\beta_k)$ is a self-adjoint endomorphism of the spin bundle.

In dimension $2m+1$, Clifford multiplication gives an isomorphism
\begin{equation}
c \colon i \Lambda^2 \oplus \Lambda^4 \oplus \cdots \oplus s_{2m} \Lambda^{2m} \to i\su(S)
\label{Clifford-isomorphism-2m+1}
\end{equation}
Given $\phi \in S$, we write $q(\phi)$ for the differential form corresponding to $E_\phi$ under~\eqref{Clifford-isomorphism-2m+1}. We consider equations for $(A,\beta,\phi)$ where $A \in \A$, $\beta = \beta_3 + \beta_5 + \cdots + \beta_{2m+1}$ with $\beta_k \in \Omega^k$, and $\phi \in \Gamma(S)$. We set
\begin{equation}
D_{A,\beta}
	=
		D_A
		+
		\sum_{k=1}^{m-1} s_{2k+1} c(\beta_{2k+1}) 
		+ 
		\sum_{k=1}^mi s_{2m-2k}c(*\beta_{2k+1})
\label{Dirac-2m+1}
\end{equation}
(In fact, $c(*\beta_{2k+1})$ is equal to $c(\beta_{2k+1})$ multiplied by some power of $i$, depending on $k$ and $m$; the precise factor is in Lemma~\ref{Clifford-Hodge} below. We choose to write $D_{A,\beta}$ this way to make clear the self-adjoint and skew-adjoint parts.)
Write
\begin{align}
F_\beta
	&=
		2 \sum_{k=1}^{m-1}s_{2k+2} \diff \beta_{2k+1}\label{F-beta-2m+1},\\
C_\beta
	&=
		2\sum_{k=1}^{m}
	(-1)^{km + m+k + 1 + \frac{m(m+1)}{2}} s_{2k} \diff^* \beta_{2k+1}.
		\label{C-beta-2m+1}
\end{align}
The notation $F_\beta$ and $C_\beta$ is explained below in Remark~\ref{gerbes}. The key thing to keep in mind for now is that, up to some factors of $2i$, $F_\beta$ is essentially $\diff \beta$ whilst, again up to some factors of $2i$ and also some cumbursome signs (an artefact of how Clifford multiplication works), $C_\beta$ is essentially $\diff^*\beta$.  Also note that the factors of $s_{2k}$ ensure that both $c(F_\beta)$ and $c(C_\beta)$ are self-adjoint.

\begin{definition}\label{SW2m+1}
Let $M^{2m+1}$ be an oriented $2m+1$-dimensional manifold with $\Spinc$-structure. The $(2m+1)$-dimensional Seiberg--Witten equations for $(A,\beta,\phi)$ are:
\begin{align}
D_{A,\beta}(\phi)
	&=
		0,\label{SW2m+1-Dirac}\\
F_A + F_\beta + C_\beta
	&=
		q(\phi),\label{SW2m+1-curvature}
\end{align}
where $D_{A,\beta}$, $F_\beta$ and $C_\beta$ are given by~\eqref{Dirac-2m+1}, \eqref{F-beta-2m+1} and~\eqref{C-beta-2m+1} respectively.
\end{definition}

When $m=1$, $F_\beta = 0$, $C_\beta = - 2i \diff^*\beta_3$ and we recover the ordinary 3-dimensional Seiberg--Witten equations. 

The point of these equations is that, as is shown in \S\ref{Weitzenbock}, the Dirac operator $D_{A,\beta}$ has a Weitzenb\"ock formula of the form
\[
D^*_{A,\beta} D_{A,\beta} - \nabla^*_{A,\beta}\nabla_{A,\beta}
	=
		\frac{s}{4}
		+
		\frac{1}{2}c(F_A + F_\beta + C_\beta)
		+
		Q(\beta)
\]
where $\nabla_{A,\beta}$ is a unitary connection on $S$ determined by $A$ and $\beta$ and $Q(\beta)$ is a zeroth order term which is purely algebraic in $\beta$. (For example, when $m=1$ this is equation~\eqref{Weitzenbock-3D} above where $Q(\beta) = |\beta|^2$.) So~\eqref{SW2m+1-curvature} prescribes the principal part of the Weitzenb\"ock remainder. Moreover, the system is elliptic modulo gauge, essentially because the de Rham complex is elliptic. This is the reason behind the various factors for $c(\beta_{2k+1})$ and $c(*\beta_{2k+1})$ in~\eqref{Dirac-2m+1}: they are chosen precisely to make $\diff \beta$ and $\diff^*\beta$ appear in the Weitzenb\"ock remainder. (See \S\ref{elliptic} for the details.)

\begin{remark}\label{gerbes}
If we think of $\beta$ as a collection of connection $k$-forms, in the sense of $\U(1)$-gerbes (or ``$k$-form gauge fields'' as they are called in the physics literature) then, up to the various factors of~$i$ and~$2$, $F_\beta$ is the sum of the curvatures of the $\beta_k$. Meanwhile $C_\beta=0$ is the Coulomb gauge condition. With this in mind it is tempting to think of $D_{A,\beta}$ as a Dirac operator coupled to various connections on appropriate $\U(1)$-gerbes. 

One reason to want to do this is to put $F_\beta$ on a similar footing to $F_A$. In 4-dimensional Seiberg--Witten theory it is important to be able to vary the cohomology class $[F_A]$ by choosing different $\Spinc$ structures. In particular, for some choices there are no solutions. In the above description, however, $[F_\beta]=0$ is fixed. To get non-zero classes, one would need to interpret $\beta_{2k+1}$ as a connection in a $2k$-gerbe with non-zero characteristic class. However we have been unable to make sense of ``spinors with values in a gerbe'' or of the action of gerbe gauge-transformations in this setting (or ``$(k-1)$-form gauge transformations'' as they are sometimes called in the physics literature).
\end{remark}

\subsection{The Seiberg--Witten equations in dimension $4m$}\label{SW-4m}

We next give the direct generalisation of the 4-dimensional Seiberg--Witten equations to dimension $n=4m$. Here, Clifford multiplication gives the following isomorphism:
\begin{equation}
c \colon i \Lambda^2 \oplus \Lambda^4 \oplus \cdots \oplus s_{2m} \Lambda^{2m}_+ \to i\su(S_+)
\label{4m-Clifford-isomorphism}
\end{equation}
where $\Lambda^{2m}_+$ is the $+1$ eigenspace of $*$ acting on $\Lambda^{2m}$. Given $\phi \in S_+$, we write $q(\phi)$ for the form which corresponds under~\eqref{4m-Clifford-isomorphism} to $E_\phi \in i\su(S_+)$. We consider equations for $(A,\beta, \phi)$ where $A \in \A$, $\beta = \beta_3 + \beta_5 + \cdots + \beta_{2m-1}$ with $\beta_k \in \Omega^k$ and $\phi \in \Gamma(S_+)$. We set
\begin{equation}
D_{A,\beta} = D_A +  \sum_{k=1}^{m-1} \Big( s_{2k+1} c (\beta_{2k+1}) + s_{4m-2k-1} c(*\beta_{2k+1} )\Big).
\label{Dirac-4mD}
\end{equation}
This is a self-adjoint operator on spinors, which swaps chirality, $D_{A,\beta} \colon \Gamma(S_+) \to \Gamma(S_-)$. Write
\begin{align}
F^+_\beta &=   2s_{2m} \diff^+ \beta_{2m-1} + 2 \sum_{k=1}^{m-2} s_{2k+2} \diff \beta_{2k+1};\label{F-beta-plus-4m}\\
C_\beta &=2  \sum_{k=1}^{m-1}  (-1)^{m+k+1}s_{2k} \diff^* \beta_{2k+1} \label{C-beta-4m}.
\end{align}
Here, $\diff^+ \beta_{2m-1}$ is the $\Lambda^{2m}_+$-component of $\diff \beta_{2m-1}$. 

\begin{definition}\label{SW4m}
Let $M^{4m}$ be an oriented Riemannian $4m$-manifold with $\Spinc$-structure. The $4m$-dimensional Seiberg--Witten equations on $M$ for $(A,\beta, \phi)$ are:
\begin{align}
D_{A,\beta} (\phi) 
	& = 0, \label{SW4m-Dirac}\\
F_A 
+ 
F^+_\beta
+
C_\beta	
	&=
		q(\phi), \label{SW4m-curvature}
\end{align}
where $D_{A,\beta}$, $F_\beta^+$ and $C_\beta$ are given by~\eqref{Dirac-4mD}, \eqref{F-beta-plus-4m} and~\eqref{C-beta-4m} respectively.
\end{definition} 

Again, the point of our equations  is that there is a Weitzenb\"ock formula for $D_{A,\beta}$ (see~\S\ref{Weitzenbock}) and the principal part of the remainder is exactly $\frac{1}{2}(F_A + F_\beta^++C_\beta)$. Moreover, the equations are elliptic modulo gauge, essentially because the truncated de~Rham complex is elliptic:
\[
\Omega^0 \to \Omega^1 \to \cdots \to \Omega^{2m}_+
\]
(See \S\ref{elliptic} for the details.)

\begin{remark}
In the case $M^{4m} = X^{4m-1} \times \R$ is a Riemannian product, one can consider solutions to the equations which are $\R$-invariant, leading to a system of equations for fields defined purely on $X$.. It turns out that these so-called ``reduced'' equations are equivalent to the $(4m-1)$-dimensional Seiberg--Witten equations on $X$ from Definition~\ref{SW2m+1}. To see this, note first that a 1-form $\alpha$ on $X$ can be made to act on $S_+$ via $c'(\alpha) = c(\alpha)c(\diff t)$ where $c$ is the Clifford action on $M$ and $t$ is the coordinate on $\R$ (with $\diff t$ unit length and positively oriented). In this way we identify $S_+$ as the spin bundle $S \to X$, with $c'$ being the Clifford product on $X$; we also identify $\R$-invariant sections of $S_+$ with $\Gamma(X,S)$. Next note that an $\R$-invariant connection $A$ on $M$ is of the form $A' + if \diff t$ where $A'$ is a connection on $X$ and $f \in \Omega^0(X)$. To recover the Seiberg--Witten equations on $X$, one should take the top degree odd form to be $*f$. Similarly, an $\R$-invariant form $\beta_{2k+1} \in \Omega^{2k+1}(M)$ has the shape $\beta_{2k+1} = \beta'_{2k+1} + \beta'_{2k} \wedge \diff t$ for forms $\beta'$ on $X$. It is the odd-degree forms $\beta'_{2k+1}$, $*\beta'_{2k}$, $*f$ (with appropriate signs) the connection $A$ and spinor $\phi$, which solve the Seiberg--Witten equations on $X$. 
\end{remark}

\subsection{The Seiberg--Witten equations in dimension $4m-2$}\label{SW-4m-2}

This leaves dimension $n=4m-2$ and here things are more complicated. Clifford multiplication gives isomorphisms
\begin{align}
c 
	&\colon 
		i \Lambda^2 \oplus \Lambda^4 \oplus \cdots \oplus s_{2m-2}\Lambda^{2m-2} 
		\to 
		i\su(S_+)
		\label{4m-2-Clifford-isomorphism-plus}\\
c
	&\colon
		s_{2m+2}\Lambda^{2m+2} \oplus \cdots \oplus \Lambda^{4m-4}
		\to
		i\su(S_-)
		\label{4m-2-Clifford-isomorphism-minus}
\end{align}
Notice that we choose to identify even forms of degree $<2m$ with Hermitian endomorphisms of $S_+$ and even forms of degree $>2m$ with hermitian endomorphisms of $S_-$. This is just a convenience; even forms of degree $2m$ also act on $S_-$ for example. 

This time, if we were to work only with $S_+$ say, there is no corresponding bundle of odd degree forms with the correct rank to set up an elliptic system. For example, in dimension~6, $i\su(S_+)$ has rank 15, so the curvature equation will give 15 constraints whilst gauge fixing provides one more. Meanwhile, the connection $A$ accounts for 6 degrees of freedom and so we are left looking for 10 more degrees of freedom, but there is no bundle of forms with this rank which we can use to parametrise Dirac operators. The way out is to use \emph{two} spinors and connections, leading to two Dirac equations and two curvature equations, with everything coupled via the odd degree forms.

We do this as follows. Given $\phi \in S_+$ and $\psi \in S_-$ we write $q(\phi)$ the differential form corresponding to $E_\phi$ under~\eqref{4m-2-Clifford-isomorphism-plus} and $q(\psi)$ for the differential form corresponding to $E_{\psi}$ under~\eqref{4m-2-Clifford-isomorphism-minus}. Notice that the components of $q(\phi)$  have degree $<2m$, whilst those of $q(\psi)$ have degree $>2m$. We consider equations for $(A,B,\beta, \phi,\psi)$ where $A, B \in \A$, $\beta = \beta_3 + \beta_5 + \cdots + \beta_{4m-5}$ where $\beta_k \in \Omega^k$, $\phi \in \Gamma(S_+)$ and $\psi \in \Gamma(S_-)$. 
We write
\begin{align}
D_{A,\beta,+}
	&=
		D_A
		+
		\sum_{k=1}^{m-2}
		\Big(s_{2k+1} c(\beta_{2k+1}) + s_{4m-2k-3}c(*\beta_{2k+1})\Big)
		+
		s_{2m-1}c(\beta_{2m-1}),
			\label{Dirac-plus-4m-2D}\\
D_{B,\beta,-}
	&=
		D_B
		+
		s_{2m-1}c(*\beta_{2m-1})
		+
		\sum_{k=m}^{2m-3} 
		\Big(s_{2k+1}c(\beta_{2k+1}) + s_{4m-2k-3} c(*\beta_{2k+1})\Big).
			\label{Dirac-minus-4m-2D}
\end{align}
We regard $D_{A,\beta,+}$ as acting on sections of $S_+$ and $D_{B,\beta,-}$ as acting on sections of $S_-$. Notice that $\beta_{2m-1}$ is the only part of $\beta$ which appears in both Dirac operators. The terms $\beta_k$ for $k <2m-1$ appear in $D_{A,\beta,+}$ and the terms $\beta_k$ for $k > 2m-1$ appear in $D_{B,\beta,-}$. We now write
\begin{align}
F_{\beta,+}
	&=
		2\sum_{k=1}^{m-2} 
			s_{2k+2} \diff \beta_{2k+1}
		\label{F-beta-plus-4m-2}\\
C_{\beta,+}
	&=
		 (-1)^{m}2\sum_{k=1}^{m-1}
		 	s_{2k} \diff^*\beta_{2k+1}
		\label{C-beta-plus-4m-2}\\
F_{\beta,-}
	&=
		2\sum_{k=m}^{2m-3} 
			s_{4m-2k-4} \diff \beta_{2k+1}
		\label{F-beta-minus-4m-2}\\
C_{\beta,-}
	&=
		(-1)^{m+1} 2\sum_{k=m-1}^{2m-3} 
			s_{4m-2k-2} *(\diff^* \beta_{2k+1})
		\label{C-beta-minus-4m-2}
\end{align}
The $\pm$ in the suffices $F_{\beta,\pm}, C_{\beta,\pm}$ are related to whether the $\beta_k$ involved appear in the Dirac operator acting on positive or negative spinors. Again, $F_{\beta,\pm}$ is essentially comprised of $\diff \beta$ and $C_{\beta,\pm}$ is made from $\diff^*\beta$. 

\begin{definition}\label{SW4m-2}
Let $M^{4m-2}$ be an oriented Riemannian $(4m-2)$-manifold with $\Spinc$ -structure. The $(4m-2)$-dimensional Seiberg--Witten equations on $M$ for $(A,B, \beta, \phi, \psi)$ are 
\begin{align}
D_{A,\beta,+} \phi &= 0,\label{SW4m-2-Dirac-plus}\\
F_{A} + F_{\beta,+} + C_{\beta,+} &=q(\phi),\label{SW4m-2-curvature-plus}\\
D_{B,\beta,-} \psi &= 0,\label{SW4m-2-Dirac-minus}\\
(-1)^{m+1}i(*F_{B}) + F_{\beta,-} + C_{\beta,-} &= q(\psi), \label{SW4m-2-curvature-minus}
\end{align}
where $D_{A,\beta,+}$, $D_{B,\beta,-}$, $F_{\beta,\pm}$ and $C_{\beta,\pm}$ are defined in equations~\eqref{Dirac-plus-4m-2D}--\eqref{C-beta-minus-4m-2}
\end{definition}

Once again, the point is that the Dirac operators $D_{A,\beta,+}$ and $D_{B,\beta,-}$ have Weitzenb\"ock formulae (see \S\ref{Weitzenbock}) in which the principal parts of the remainders are precisely $\frac{1}{2}(F_A+F_{\beta,+}+C_{\beta,+})$ and $\frac{1}{2}((-1)^{m+1} i (*F_B) + F_{\beta,-}+C_{\beta,-})$ respectively.

Since there are two spinors and two connections in play, the appropriate gauge group is now $\G = \Map(M, S^1\times S^1)$ where the first factor acts by pull-back on $(A,\phi)$, the second by pull-back on $(B,\psi)$ and both factors act trivially on $\beta$. This action preserves the above equations. With this in mind, the equations~\eqref{SW4m-2-Dirac-plus}--\eqref{SW4m-2-curvature-minus} are elliptic modulo gauge, something which essentially comes down to ellipticity of the de~Rham complex again. The details are found in \S\ref{elliptic} but we explain here how the numerology works out in dimension 6. The important variables here are the connections $A,B$ and the 3-form $\beta$. (The Dirac equations are already elliptic and so $\phi,\psi$ do not concern us for this discussion.) Each curvature equation is 15 constraints, whilst fixing for both factors in the gauge action brings another 2 constraints, making $15+15+2=32$ in total; meanwhile, each connection gives 6 degrees of freedom (the rank of $\Lambda^1$) and the rank of $\Lambda^3$ is 20, so $(A,B,\beta_3)$ corresponds to $6+6+20=32$ degrees of freedom, which is equal to the number of constraints. 

\begin{remarks}
We make three remarks concerning the $4m-2$-dimensional equations.
\begin{enumerate}
\item
We could equally have taken both spinors to be sections of $S_+$. One reason to take sections of both $S_+$ and $S_-$ is that in dimension~$4m-2$, geometric structures do not single out a ``preferred orientation''. For example, in dimension~4, a symplectic manifold $(M,\omega)$ has a preferred orientation, given by $\omega^2$ and typically $M$ will not have a symplectic structure inducing the opposite orientation. However, if $\dim M = 6$, then $\omega$ and $-\omega$ induce opposite orientations. Since geometric structures do not single out an orientation in dimension $4m-2$, we choose the same behaviour for the Seiberg--Witten equations. This choice affects the index of the equations, as computed in \S\ref{elliptic}, but not the analytic estimates in \S\ref{analysis}.
\item
Another interesting choice is to begin with \emph{two} $\Spinc$-structures $S_\pm$ and $W_{\pm}$. We may then take $A \in \A(S_+)$, $\phi \in \Gamma(S_+)$ whilst $B \in \A(W_-)$, $\psi \in \Gamma(W_-)$. Again, this choice will affect the index of the equations, but not the analytic estimates proved later. This choice is convenient in the second author's construction of examples of solutions of the equations over K\"ahler threefolds \cite{Ghosh}.
\item
When $M^{4m-2} = X \times \R$ is a Riemannian product, one can certainly consider solutions to the equations over $M$ which are invariant in the $\R$ direction. This gives a system of equations on the odd-dimensional manifold $X$. Unlike dimensional reduction from $4m$ to $4m-1$ dimensions, however, this time the resulting equations on $X$ are more complicated than the odd-dimensional Seiberg--Witten equations as in Definition~\ref{SW2m+1}. 

One could also dimensionally reduce the $(4m-1)$-dimensional Seiberg--Witten equations on $M^{4m-2}\times \R$ to obtain a system on $M^{4m-2}$. This also gives a system of equations which is more complicated that the $(4m-2)$-dimensional Seiberg--Witten equations of Definition~\ref{SW4m-2}.
\end{enumerate}
\end{remarks}

\section{5-dimensional examples}\label{5D-examples}

Let $(X,\omega)$ be a K\"ahler surface and $L \to X$ a Hermitian holomorphic line bundle, with Chern connection $B$. In this section we describe solutions to the 5-dimensional Seiberg--Witten equations on the total space of the unit circle bundle $M \subset L$. (The ansatz that we use will ultimately require $L$ to be ample and $X$ to have canonical bundle $K = L^3$.)

To begin, recall that there is a canonical $\Spinc$-structure on the K\"ahler surface $X$, with spinor bundles $S^{\mathrm{can}}_+(X) = \Lambda^{0,0} \oplus \Lambda^{0,2}$ and $S^{\mathrm{can}}_-(X) = \Lambda^{0,1}$. We twist this by $L$ to produce a $\Spinc$-structure $S(X)$ with $S_+(X) = L \oplus \Lambda^{0,2}\otimes L$ and $S_-(X) = \Lambda^{0,1}\otimes L$. We will use the fact that the splitting of $S_+(X)$ is preserved by the Clifford action of the K\"ahler form $\omega$; it acts as $2i$ on $L$ and $-2i$ on $\Lambda^{0,2}\otimes L$. 

We next describe the Riemannian metric on $M \to X$. We denote a point of $M$ by $(x,v)$ where $x \in X$ and $v \in L_x$ with $|v|=1$. The connection $B$ in $L$ gives a horizontal distribution in $M$. The horizontal-vertical splitting of $TM$ has the form:
\begin{equation}
T_{(x,v)}M = H_{(x,v)} \oplus V_{(x,v)} \cong \pi^* T_xX \oplus \{ w \in L_x : \left\langle w,v \right\rangle = 0\}.
\label{splitting-TM}
\end{equation}
Here $\left\langle \cdot, \cdot \right\rangle$ denotes the inner-product on $L_x$. This splitting makes $M$ into a Riemannian manifold: we declare~\eqref{splitting-TM} to be orthogonal, the map $\pi \colon M \to X$ to be a Riemannian submersion, and give each fibre of $\pi$ the metric induced from the corresponding fibre of~$L$. 

The vertical space $V_{(x,v)}$ is spanned by $iv$. We define a 1-form $\alpha$ on $M$ by first projecting $TM$ to $V$ against $H$, and then dividing by the generator $iv$. Equivalently, $i\alpha$ is the principal connection 1-form of $B$ when we consider $M$ as the principal frame bundle of $L$. We use the volume form $\frac{1}{2}\alpha \wedge \pi^*\omega^2$ to orient $M$. 

We define a $\Spinc$-structure on $M$ by pulling back $S(X)$:
\begin{equation}
S(M) = \pi^*S(X) = \pi^* L \oplus \pi^*(\Lambda^{0,1}\otimes L) \oplus \pi^*(\Lambda^{0,2}\otimes L)
\label{splitting-S(M)}
\end{equation}
We also pull back the Hermitian structure of $S(X)$, making $S(M)$ Hermitian and~\eqref{splitting-S(M)} an orthogonal decomposition. The Clifford action of covectors on $M$ works as follows. Under the splitting~\eqref{splitting-TM}, the horizontal covectors in $T_{(x,v)}M$ are identified with covectors in $T^*_xX$. These act on $S(X)$ and so on $S(M)$. Notice that the horizontal covectors swap the summands $\pi^*(L \oplus \Lambda^{0,2}\otimes L)$ and $\pi^*(\Lambda^{0,1} \otimes L)$. The remaining direction is given by the 1-form $\alpha$. We define the Clifford action of $\alpha$ by
\begin{equation}
c(\alpha) = \begin{cases}
	-i \Id & \text{on } \pi^*(L \oplus \Lambda^{0,2}\otimes L)\\
	\phantom{-}i \Id & \text{on } \pi^*(\Lambda^{0,1} \otimes L)
	\end{cases}
\label{Clifford-alpha}
\end{equation}
One checks that, together with the action of horizontal covectors, this satisfies the Clifford relations and makes $S(M)$ into the spin bundle of a $\Spinc$-structure on $M$. (In particular, the signs here are chosen so that $c(\dvol) = i$, as it must in dimension 5.)

The line bundle associated to $S(X)$ is $L^2 \otimes K^{-1}$. Pulling this back gives the line bundle associated to $S(M)$. The bundles $L\to X$ and $K \to X$ both have unitary connections (from the Hermitian metric on $L$ and the K\"ahler metric on $X$). We pull these connections back to $M$ and use them to define a connection on $ \pi^*(L^2 \otimes K^{-1})$, which we denote $A$. This connection will ultimately be part of our solution to the Seiberg--Witten equations.

For the spinor, we begin by noting that there is a tautological section of $\pi^*L \to M$, given by $\tau(x,v) = v$. Since $\pi^*L$ is a summand of $S(M)$ we can regard $\tau$ as a spinor over $M$. Since $\tau$ is a unit-length section of the first summand of the orthogonal decomposition~\eqref{splitting-S(M)}, it is  straight-forward to write the action of $q(\tau)$ with respect to~\eqref{splitting-S(M)}:
\[
c(q(\tau)) 
	= 
		\begin{pmatrix}
		\frac{3}{4} \Id & 0 & 0 \\
		0 & - \frac{1}{4} \Id & 0 \\
		0 & 0 & - \frac{1}{4} \Id 
		\end{pmatrix}
\]
(This is trace zero: the middle $\Id$ acts on the rank~2 bundle $\pi^*(\Lambda^{0,1} \otimes L)$, whilst the $\Id$ operators in the corners act on the line bundles $\pi^*L$ and $\pi^*(\Lambda^{0,2} \otimes L)$.) 

To recognise this in terms of even-degree forms (via~\eqref{Clifford-isomorphism-2m+1}), we give the Clifford  action of $i\pi^*\omega$ and $\pi^*\omega^2$ on $S(M)$. These both preserve the splitting~\eqref{splitting-S(M)} of $S(M)$ and they act on the summands as follows:
\[
c(i\pi^*\omega)
	=
		\begin{pmatrix}
		2 \Id &0&0\\
		0&0&0\\
		0&0&-2\Id
		\end{pmatrix},
\qquad
c(\pi^*\omega^2)
	=
		\begin{pmatrix}
		-2 \Id &0&0\\
		0& 2\Id &0\\
		0&0&-2\Id
		\end{pmatrix}
\]
The formula for $c(i\pi^*\omega)$ follows from the action of $\omega$ on $S_+(X)$; the formula for $c(\pi^*\omega^2)$ follows from the fact that $\omega^2$ acts as multiplication by $\mp2$ on $S_{\pm}(X)$. From here we obtain
\begin{equation}
\label{tautological-q(tau)}
q(\tau) = \frac{1}{8} \left( 2i \pi^* \omega - \pi^* \omega^2\right)
\end{equation}

We next compute the action of the Dirac operator on $\tau$. Recall the connection $A$ on $\pi^*(L^2 \otimes K^{-1})$ given by pulling back the connections on $L$ and $K$. This determines a spin connection $\nabla_A$ on $S(M)$. We will only be interested in the action of $\nabla_A$ on the summand $\pi^*L \leq S(M)$. We claim that $\nabla_A$ makes this sub-bundle parallel and moreover on this sub-bundle it acts as $\pi^*B$ (where $B$ is the Chern connection of $L$). To see this, note that downstairs, the natural spin connection on $S_+^{\text{can}}(X) = \Lambda^{0,0} \oplus \Lambda^{0,2}$ makes the summand $\Lambda^{0,0}$ parallel, where it acts as ordinary differentiation. Tensoring by $(L,B)$ we get a spin connection on $S_+^{\text{can}}(X) \otimes L = L \oplus \Lambda^{0,2}\otimes L$ which makes $L$ parallel, where it acts as differentiation with respect to $B$. Now pulling back to $M$ we see the analogous statement is true for $\nabla_A$ and $\pi^*L$. 

We now compute $D_A \tau$. Choose a local orthonormal frame for $TM$ in which $e_1,e_2,e_3,e_4$ are horizontal and $e_0$ is vertical, with $\alpha(e_0) =1$. By the tautological definition of $\tau$, we have $\nabla^A_{e_0} \tau = \nabla^{\pi^*B} \tau = i\tau$. Meanwhile, $\tau$ is parallel for $\nabla^{\pi^*B}$, and so also for $\nabla_A$, in the horizontal directions. This is because the horizontal distribution in $M$ is determined by the same connection in $L$ which gives rise to $B$ itself. From here we see that $\nabla^A \tau = i\alpha \otimes \tau$ and so, using~\eqref{Clifford-alpha},  $D_A \tau =  i c(\alpha)\tau = \tau$.

We are now ready to solve the 5-dimensional Seiberg--Witten equations. Recall that these equations are for a spinor $\phi$, connection $A \in \A$, and forms $\beta_3 \in \Omega^3$, $\beta_5 \in \Omega^5$; they read
\begin{align*}
\left(D_A + c(\beta_3) - c(*\beta_3) + i  c(*\beta_5) \right) \phi &= 0,\\
F_A - 2 i \diff^* \beta_3 + 2\diff \beta_3 + 2 \diff^* \beta_5 & = q(\phi).
\end{align*}
We take $\phi = 2\sqrt{2} \tau$, for $A$ we use the above choice of connection in $\det S(M)$ and for the odd-degree forms we set $\beta_3 = \frac{1}{2} \alpha \wedge \pi^*\omega$ and $\beta_5 = - \alpha \wedge \omega^2$. Then 
\begin{align*}
c(\beta_3)\phi 
	& = \frac{1}{2}c(\alpha) \circ c(\pi^*\omega) (\phi)
	 = - \phi\\
c(*\beta_3)\phi
	& = \frac{1}{2}c(\pi^*\omega) \phi 
	 = - i \phi\\
c(*\beta_5) \phi
	&= -\phi
\end{align*}
Together with $D_A \phi = \phi$ it follows that the Dirac equation is solved.

Meanwhile, since $*\beta_3 = \frac{1}{2}\pi^*\omega$ we have $\diff^*\beta_3 = 0$, whilst $\diff \beta_3 = \frac{1}{2}\diff \alpha \wedge \pi^*\omega$ and $\diff^*\beta_5=0$. Also $F_A = \pi^*(2 F_B - F_K)$, since $A$ is the pull-back of the corresponding connection in $L^2 \otimes K^{-1}$. So the curvature equation reads
\begin{equation}
 \pi^*(2 F_B -  F_K) + \diff \alpha \wedge \pi^*\omega = 2i\pi^* \omega - \pi^*\omega^2
\label{curvature-equation-circle-bundle}
\end{equation}
The 4-form part of this equation is satisfied if $\diff \alpha = - \pi^*\omega$. Recalling that $i\alpha$ is the connection 1-form of $B$, we have $\diff \alpha = -i \pi^*F_B$ and so we require that $F_B = -i\omega$. In particular, $L$ must be ample, with $c_1(L) = 2\pi[\omega]$.  Now the 2-form part of the curvature equation reads $F_K = -4i \omega$. This is satisfied provided $c_1(X) =  - 4c_1(L)$ and $\omega$ is K\"ahler--Einstein. 

Examples are relatively easy to find: any $X$ with canonical bundle of the form $K = L^4$ for some ample $L$ will do since by Aubin--Yau all such surfaces carry K\"ahler--Einstein metrics. For concrete examples, take a degree $d$ hypersurface in $X \subset \C\P^3$. By adjunction, this has $K = \mathcal{O}(d - 4)|_X$. When $d = 4k$ for $k \geq 2$, we see that $K = L^4$ where $L = \mathcal{O}(k-1)|_X$ is ample.

\section{Ellipticity}\label{elliptic}

\begin{proposition}
The odd-dimesional Seiberg--Witten equations~\eqref{SW2m+1-Dirac} and~\eqref{SW2m+1-curvature} over $M^{2m+1}$ are elliptic modulo gauge, with index zero. 
\label{SW-odd-elliptic}
\end{proposition}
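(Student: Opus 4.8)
The plan is to linearise the equations at an arbitrary solution $(A,\beta,\phi)$, add the Coulomb gauge-fixing condition, and show the resulting operator is elliptic by identifying its symbol with that of a known elliptic operator (the de~Rham operator plus a Dirac operator). Since ellipticity is a statement about the principal symbol only, all zeroth-order terms—in particular the algebraic dependence on $\beta$ and $\phi$ and the term $q(\phi)$—are irrelevant, so I may as well linearise at $(A,0,0)$ and work symbolically.

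\medskip
\textbf{Step 1: Assemble the linearised operator.} The variables are $(a,b,\dot\phi) \in \Omega^1 \oplus \bigl(\bigoplus_{k=1}^{m}\Omega^{2k+1}\bigr) \oplus \Gamma(S)$, where $a = \dot A$. The linearisation of the Dirac equation~\eqref{SW2m+1-Dirac} is $D_A\dot\phi + (\text{zeroth order in }a,b)$, which to leading order is just $D_A$, already elliptic on $\Gamma(S)$. The linearisation of the curvature equation~\eqref{SW2m+1-curvature} is, to leading order, $a \mapsto \diff a$ together with $b = (b_3,\dots,b_{2m+1}) \mapsto \diff b + \diff^* b$: precisely because of the factors $s_{2k+1}$, $s_{2m-2k}$ chosen in~\eqref{Dirac-2m+1}, the maps~\eqref{F-beta-2m+1} and~\eqref{C-beta-2m+1} reproduce $\diff\beta$ and $\diff^*\beta$ up to nonzero constants, as the paper emphasises. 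Adjoining the Coulomb condition $\diff^* a = 0$ (linearising the gauge slice) and splitting the target via the isomorphism~\eqref{Clifford-isomorphism-2m+1} of $i\su(S)$ with $i\Lambda^2 \oplus \Lambda^4 \oplus \cdots \oplus s_{2m}\Lambda^{2m}$, the $\beta$-and-$a$ part of the linearised, gauge-fixed operator is
\[
(a,b) \longmapsto \bigl(\diff^* a,\ \diff a + \diff b + \diff^* b\bigr) \in \Omega^0 \oplus \Bigl(\textstyle\bigoplus_{j=1}^{m}\Omega^{2j}\Bigr).
\]

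\medskip
\textbf{Step 2: Recognise the de~Rham operator.} The domain $\Omega^1 \oplus \Omega^3 \oplus \cdots \oplus \Omega^{2m+1}$ is exactly the space of odd-degree forms on $M^{2m+1}$, and the target $\Omega^0 \oplus \Omega^2 \oplus \cdots \oplus \Omega^{2m}$ is the space of even-degree forms; the operator above is $\diff + \diff^*$ restricted to odd forms, which is the rolled-up de~Rham operator and is elliptic. (The only subtlety is bookkeeping: that every even-degree form is hit—$\diff^* a$ covers $\Omega^0$, $\diff a + \diff^* b_3$ covers $\Omega^2$, $\diff b_{2k-1} + \diff^* b_{2k+1}$ covers $\Omega^{2k}$, and $\diff b_{2m-1} + \diff^* b_{2m+1}$ together with the absence of any $\Omega^{2m+2}$ constraint covers $\Omega^{2m}$—and that no form degree is double-counted, which matches the ranks in~\eqref{Clifford-isomorphism-2m+1}.) Hence the symbol of the full linearised gauge-fixed operator is block-diagonal: the de~Rham symbol on forms, direct sum the Dirac symbol on spinors. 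Both are invertible away from the zero covector, so the system is elliptic modulo gauge.

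\medskip
\textbf{Step 3: Index zero.} The operator is the direct sum of $D_A$ on $\Gamma(S)$ and the (rolled-up) de~Rham operator from odd to even forms. A self-adjoint Dirac operator on an odd-dimensional closed manifold has index zero. The de~Rham operator $\diff + \diff^*\colon \Omega^{\odd} \to \Omega^{\even}$ on a closed $n$-manifold has index $\chi(M)\cdot(-1)^{?}$—more precisely its index is $(-1)$ times the Euler characteristic up to sign—but on an \emph{odd}-dimensional closed manifold $\chi(M)=0$, so this contribution also vanishes. Therefore the total index is $0$. I would phrase this last point carefully, since the precise identification of the analytic index of the assembled operator with $\chi(M)$ requires noting that one is computing $\dim\ker - \dim\mathrm{coker}$ of $\diff+\diff^*$ between \emph{all} odd and \emph{all} even forms, whose index is $\sum_k (-1)^{k+1}b_k \cdot(\cdots)$; on an odd manifold Poincaré duality forces this to be zero.

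\medskip
\textbf{Main obstacle.} The genuine content is not the ellipticity—that is forced by the design of the equations—but the \emph{combinatorial verification} that the degrees and ranks match up exactly: that the components of $F_\beta$, $C_\beta$ and $F_A$, together with the single Coulomb constraint, assemble precisely into $\diff + \diff^*$ acting $\Omega^{\odd}\to\Omega^{\even}$ with no form degree missing or repeated, and that the signs/constants $s_k$ never vanish so the symbol is genuinely an isomorphism. This is exactly the ``numerology'' the introduction alludes to, and checking it degree-by-degree (including the two boundary cases, degree $2$ and degree $2m$) is where the care is needed; once it is in place, ellipticity and the vanishing of the index follow from standard facts about the de~Rham and Dirac operators.
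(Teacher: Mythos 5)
Your proposal is correct and follows essentially the same route as the paper: linearise, add the Coulomb gauge condition, discard zeroth-order terms, and identify the form part of the resulting operator with $\diff+\diff^*\colon\Omega^{\odd}\to\Omega^{\even}$ (up to harmless constants and factors of $i$), so that ellipticity and index zero follow from the de~Rham and Dirac operators. Your extra remark that the de~Rham contribution to the index vanishes because $\chi(M)=0$ in odd dimensions is exactly the fact the paper uses implicitly.
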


\begin{proof}
The equations~\eqref{SW2m+1-Dirac},~\eqref{SW2m+1-curvature} define a map
\begin{gather*}
\SW 
	\colon 
		\A \times \Omega^{\odd, \geq 3} \times \Gamma(S) 
			\to 
		\left(  i \Omega^2 \oplus \Omega^4 \oplus \cdots \oplus s_{2m}\Omega^{2m} \right)
		\times \Gamma(S) \\
\SW(A,\beta,\phi) = (F_A + F_\beta + C_\beta - q(\phi), D_{A,\beta}\phi)  
\end{gather*}
Suppose $\delta(A,\beta,\phi) = (2ia,b,\sigma)$ is an infinitesimal perturbation of $(A,\beta,\phi)$, where $a\in \Omega^1$, $b \in \Omega^{\odd, \geq 3}$ and $\sigma \in \Gamma(S)$. As $a,b$ vary, $a+b$ fills out the space $\Omega^{\odd}$ of \emph{all} odd degree forms. The linearisation of $\SW$ at $(A,\beta,\phi)$ is
\begin{gather*}
\diff_{(A,\beta,\phi)}\SW 
	\colon 
		\Omega^{\odd} \oplus \Gamma(S) 
			\to 
		 \left(  
			i \Omega^2 \oplus \Omega^4 \oplus \cdots \oplus s_{2m}\Omega^{2m}
		\right)
			\oplus
		\Gamma(S)\\
\diff_{(A,\beta,\phi)} \SW(a+b,\sigma) 
	= 
		\left(2ida + F_b + C_b - \diff_{\phi}q (\sigma), D_{A,\beta}\sigma +  c(ia+b) \phi\right)
\end{gather*}
(Note $F_\beta$ and $C_\beta$ are linear in $\beta$ and so equal to their own derivative.) We supplement this with the Coulomb gauge condition $2 \diff^* \colon \Omega^1 \to \Omega^0$ and discard the zeroth order terms $\diff_\phi q (\psi)$ and $c(a)\phi$ which do not affect ellipticity or the index. This leaves the map
\begin{gather*}
L \colon \Omega^{\odd} \oplus \Gamma(S) \to s\Omega^{\even} \oplus \Gamma(S)\\
L(a,b,\sigma) = \left( 2\diff^*a + 2i \diff a + F_b + C_b, D_A \sigma \right)
\end{gather*}
where $s\Omega^{\even} = \bigoplus_{k=0}^m s_{2k}\Omega^{2k}$. The point is that the first component, $2(\diff^*a + i\diff a) + F_b + C_b$, is essentially the operator $2(\diff + \diff^*)$ acting on $a+b$, just with some extraneous signs and factors of $i$.  This doesn't affect invertibility of the symbol, nor does it change the kernel and cokernel. Since both $\diff +\diff^*$ and $D_A$ are elliptic with index zero, the same is true for~$L$. 
\end{proof}

\begin{proposition}
The $4m$-dimensional Seiberg--Witten equations~\eqref{SW4m-Dirac} and~\eqref{SW4m-curvature} are elliptic modulo gauge, with index:
\[
- 1 + b_1 - b_2 + \cdots + b_{2m-1} - b_{2m}^+ +  2\int_M c_1(L) \wedge \Td(M) 
\]
\end{proposition}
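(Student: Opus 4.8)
Here is how I would go about it. The plan is to run the argument of Proposition~\ref{SW-odd-elliptic} essentially verbatim; the one new feature is that the de~Rham complex is now truncated in the middle and capped off with the self-dual projection.

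\medskip

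First I would write equations~\eqref{SW4m-Dirac}--\eqref{SW4m-curvature} as a map
\[
\SW\colon \A\times\big(\Omega^3\oplus\Omega^5\oplus\cdots\oplus\Omega^{2m-1}\big)\times\Gamma(S_+)\longrightarrow\big(i\Omega^2\oplus\Omega^4\oplus\cdots\oplus s_{2m}\Omega^{2m}_+\big)\times\Gamma(S_-),
\]
\[
\SW(A,\beta,\phi)=\big(F_A+F^+_\beta+C_\beta-q(\phi),\,D_{A,\beta}\phi\big),
\]
and linearise at a solution. Writing the perturbation of $A$ as $2ia$ with $a\in\Omega^1$, the perturbation of $\beta$ as $b\in\Omega^3\oplus\cdots\oplus\Omega^{2m-1}$ and the perturbation of $\phi$ as $\sigma$, the form $a+b$ runs over all odd-degree forms of degree $\le 2m-1$. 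As in Proposition~\ref{SW-odd-elliptic} I would then append the Coulomb gauge operator $2\diff^*\colon\Omega^1\to\Omega^0$ and discard the zeroth-order terms $\diff_\phi q(\sigma)$ and $c(ia+b)\phi$, which change neither the symbol nor the index. What remains is $L=L_{\mathrm{form}}\oplus D_A$, where $D_A\colon\Gamma(S_+)\to\Gamma(S_-)$ is the $\Spinc$ Dirac operator and, exactly as in the odd case, up to harmless signs and factors of $i$ the map $L_{\mathrm{form}}$ sends $a+b$ to $(\diff+\diff^*)(a+b)$ with its top-degree component projected onto $\Lambda^{2m}_+$ (this is the point of the $\diff^+\beta_{2m-1}$ term in $F^+_\beta$). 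In other words $L_{\mathrm{form}}$ is the odd-to-even roll-up of the truncated, self-dual-capped de~Rham complex
\[
\mathcal C_+\colon\qquad 0\to\Omega^0\xrightarrow{\diff}\Omega^1\xrightarrow{\diff}\cdots\xrightarrow{\diff}\Omega^{2m-1}\xrightarrow{\diff^+}\Omega^{2m}_+\to 0 .
\]
The symbol sequence of $\mathcal C_+$ is an initial segment of the Koszul complex for $\xi\wedge(\cdot)$, truncated and composed with the orthogonal projection onto $\Lambda^{2m}_+$, and it stays exact for $\xi\ne 0$; so $\mathcal C_+$ is elliptic, hence so is $L$, and the equations are elliptic modulo gauge.

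\medskip

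For the index I would use $\ind L=-\chi(\mathcal C_+)+\ind_{\R}D_A$, the minus sign because $L_{\mathrm{form}}$ runs from odd- to even-degree forms, i.e.\ it is the roll-up of $\mathcal C_+$ in the direction opposite to the one computing $\chi(\mathcal C_+)$. To evaluate $\chi(\mathcal C_+)$ I would identify its cohomology: at $\Omega^k$ with $k\le 2m-2$ it is $H^k_{\mathrm{dR}}(M)$; at $\Omega^{2m-1}$ it is still $H^{2m-1}_{\mathrm{dR}}(M)$, because if $\eta\in\Omega^{2m-1}$ has $(\diff\eta)^+=0$ then $\diff\eta$ is anti-self-dual, hence closed and coclosed, hence harmonic, hence (being exact) zero, so $\ker\diff^+=\ker\diff$ on $\Omega^{2m-1}$; and at $\Omega^{2m}_+$ the cokernel of $\diff^+$ is the space $\mathcal{H}^{2m}_+$ of self-dual harmonic forms, of dimension $b_{2m}^+$, since a self-dual $\gamma$ is $L^2$-orthogonal to $\diff^+\Omega^{2m-1}$ iff $\diff^*\gamma=0$ iff $\diff\gamma=0$. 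Hence $\chi(\mathcal C_+)=1-b_1+b_2-\cdots-b_{2m-1}+b_{2m}^+$, so $-\chi(\mathcal C_+)=-1+b_1-b_2+\cdots+b_{2m-1}-b_{2m}^+$. Finally $\ind_{\R}D_A=2\ind_{\C}D_A$, and the Atiyah--Singer index theorem evaluates $\ind_{\C}D_A$ to the characteristic number $\int_M c_1(L)\wedge\Td(M)$ (a routine rewriting of $\int_M e^{c_1(L)/2}\hat A(M)$). Summing the two contributions gives the stated index.

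\medskip

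The only step that needs genuine care — everything else being a repeat of Proposition~\ref{SW-odd-elliptic} with more cumbersome indexing — is the top degree: one must check that the cokernel of the gauge-fixed curvature operator is $\mathcal{H}^{2m}_+$ and not all of $\mathcal{H}^{2m}$, which is exactly the small Hodge-theoretic fact about the self-dual projection recorded above (and which is also what keeps the truncated symbol sequence exact). I expect this to be the main obstacle, though it is not a deep one; the repackaging of $F^+_\beta$ and $C_\beta$ as $\diff+\diff^*$ up to signs and factors of $i$, the irrelevance of the zeroth-order terms, and the evaluation of the Dirac index are all routine.
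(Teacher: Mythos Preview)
Your proposal is correct and follows essentially the same route as the paper's proof: linearise, add Coulomb gauge, drop zeroth-order terms, and recognise the result as the roll-up of the truncated self-dual de~Rham complex plus the Dirac operator, then read off the index from Hodge theory and Atiyah--Singer. The only difference is that you spell out the Hodge-theoretic computation of $\chi(\mathcal C_+)$ (in particular the identification $\ker\diff^+=\ker\diff$ on $\Omega^{2m-1}$ and the cokernel $\mathcal H^{2m}_+$), whereas the paper simply asserts that the extraneous signs and factors of $i$ do not change kernel and cokernel and writes down the index directly.
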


\begin{proof}
The equations~\eqref{SW4m-Dirac} and~\eqref{SW4m-curvature} define a map
\begin{gather*}
\SW \colon \A \times \left( \Omega^{3} \oplus \Omega^5 \oplus \cdots \oplus \Omega^{2m-1}\right) \times \Gamma(S_+)
	\to
		\left(i
		\Omega^2 \oplus \Omega^4 \oplus \cdots \oplus s_{2m} \Omega^{2m}_+
		\right) 
		\times \Gamma(S_+)\\
\SW(A,\beta,\phi)
	=
		\left( 
		F_A + F^+_\beta + C_\beta - q(\phi), D_{A,\beta}\phi
		\right) 
\end{gather*}
Following the same arguments as in the proof of Proposition~\ref{SW-odd-elliptic}, we conclude that the ellipticity of the equations modulo gauge is equivalent to that of the operator
\begin{gather*}
L \colon  \Omega^1 \oplus \Omega^3 \oplus \cdots \oplus \Omega^{2m-1} \oplus \Gamma(S_+)
	\to
		\Omega^0 \oplus i \Omega^2 \oplus \cdots \oplus s_{2m}	\Omega^{2m}_+ 
		\oplus 
		\Gamma(S_-)\\
L(a+b,\sigma) = \left( 2d^*a + 2ida + F_b^+  + C_b, D_A\sigma \right)
\end{gather*}
where $F_b^+$ and $C_b$ are defined by~\eqref{F-beta-plus-4m} and~\eqref{C-beta-4m} respectively. The point now is that the first component, $(2\diff^*a + i\diff a) + F_b^++C_b$ is essentially the elliptic operator corresponding to the truncated de Rham complex
\[
\Omega^0 \stackrel{\diff}{\to} \Omega^1  \stackrel{\diff}{\to} \cdots \to \Omega^{2m-1}
 \stackrel{\diff^+}{\to}\Omega^{2m}_+
\] 
The fact that $L$ has some additional signs and factors of $i$ affects neither the invertibility of the symbol nor the kernel and cokernel. It follows that the equations are elliptic and have the same index as $\ind(D_A) - \sum_{k=0}^{2m-1} (-1)^kb_k - b^+_{2m}$. The result now follows from the Atiyah--Singer index theorem~\cite{Atiyah-Singer}. 
\end{proof}

\begin{proposition}
The $(4m-2)$-dimensional Seiberg--Witten equations~\eqref{SW4m-2-Dirac-plus}--\eqref{SW4m-2-curvature-minus} are elliptic modulo gauge, with index $-\chi(M)$ where $\chi(M)$ denotes the Euler characteristic of $M$.
\end{proposition}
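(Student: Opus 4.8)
The plan is to follow the template of Proposition~\ref{SW-odd-elliptic} and the preceding $4m$-dimensional result. First I would write the left-hand sides of \eqref{SW4m-2-Dirac-plus}--\eqref{SW4m-2-curvature-minus} as a single map
\[
\SW \colon \A \times \A \times \textstyle\bigoplus_{k=1}^{2m-3}\Omega^{2k+1} \times \Gamma(S_+) \times \Gamma(S_-) \longrightarrow (\text{target}),
\]
linearise at a solution $(A,B,\beta,\phi,\psi)$, and supplement the linearisation with the \emph{two} Coulomb conditions $2\diff^*a$, $2\diff^*\bar a$ coming from the two factors of $\G = \Map(M,S^1\times S^1)$. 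Just as in the earlier proofs, all zeroth-order terms --- the derivatives $\diff_\phi q$ and $\diff_\psi q$, the Clifford terms $c(a)\phi$ and $c(\bar a)\psi$, and the algebraic $\beta$-terms inside $D_{A,\beta,+}$ and $D_{B,\beta,-}$ --- are discarded without affecting the symbol, kernel or cokernel. What remains has the shape $L = L_{\mathrm{form}} \oplus D_A \oplus D_B$, where $D_A \colon \Gamma(S_+)\to\Gamma(S_-)$, $D_B \colon \Gamma(S_-)\to\Gamma(S_+)$, and $L_{\mathrm{form}}$ is built out of $\diff^*a$, $\diff a + F_{\beta,+}+C_{\beta,+}$, $\diff^*\bar a$, and $(-1)^{m+1}i(*\diff\bar a) + F_{\beta,-}+C_{\beta,-}$.

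The two Dirac contributions cancel in the index: $D_B\colon\Gamma(S_-)\to\Gamma(S_+)$ is homotopic through elliptic operators to $D_A^*\colon\Gamma(S_-)\to\Gamma(S_+)$ (connect $B$ to $A$ linearly), so $\ind D_A + \ind D_B = 0$. This is the structural reason that, unlike in dimension $4m$, no Chern-number terms survive. It therefore suffices to show that $L_{\mathrm{form}}$ is elliptic of index $-\chi(M)$, and the claim is that it has the same symbol, kernel and cokernel as the rolled-up de~Rham operator $\diff + \diff^* \colon \Omega^{\odd}(M) \to \Omega^{\even}(M)$, which is the adjoint of the Gauss--Bonnet operator and hence has index $-\chi(M)$. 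Concretely: identifying $\Omega^k \cong \Omega^{4m-2-k}$ via the Hodge star, the ``$+$'' block $(\diff^*a,\ \diff a + F_{\beta,+}+C_{\beta,+})$ reproduces the lower half $\Omega^0 \to \Omega^1 \to \cdots \to \Omega^{2m-1}$ of the de~Rham complex (rolled up), the ``$-$'' block reproduces a second copy of that half once $*$ is applied to bring $*F_B$ and the $*\diff^*\beta$-terms down to low degree, and $\beta_{2m-1}$ --- the only part of $\beta$ shared by $D_{A,\beta,+}$ and $D_{B,\beta,-}$ --- is precisely the glue joining the two halves at the middle degree into a single full de~Rham complex. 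One then checks that the ranks match (as the introduction does in dimension $6$: $6+6+20 = 1+1+15+15$) and that the coefficients $s_k$, the signs, and the placement of the stars do not spoil invertibility of the symbol.

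The main obstacle is this last step of bookkeeping: verifying that the coupling through $\beta_{2m-1}$ genuinely sews the two truncated complexes into the \emph{full} de~Rham complex --- rather than leaving two half-complexes, which are not elliptic on their own and would give the wrong index --- and that, after the Poincar\'e-duality identifications, every odd degree occurs exactly once on the source and every even degree exactly once on the target. Arranging the signs $(-1)^m$, $(-1)^{m+1}$, the factors $s_k$, and the Hodge stars in $F_{\beta,\pm}$, $C_{\beta,\pm}$ so that the symbol of $L_{\mathrm{form}}$ is invertible for every $\xi \ne 0$ is the delicate part. Once that is done, $\ind L_{\mathrm{form}} = -\chi(M)$, and the Atiyah--Singer index theorem (here just Gauss--Bonnet together with $\ind D_A + \ind D_B = 0$) gives $\ind \SW = -\chi(M)$.
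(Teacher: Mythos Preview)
Your proposal is correct and follows the same strategy as the paper: linearise, add two Coulomb conditions, drop zeroth-order terms, and recognise the form part as $\diff+\diff^*$ on $\Omega^{\odd}$ while the two Dirac indices cancel. The paper, however, makes one clean move that removes exactly the ``bookkeeping obstacle'' you worry about: instead of keeping the variation of $B$ in $\Omega^1$ and later applying $*$ to match the $-$ block with the upper half of the de~Rham complex, it uses the Hodge star at the outset to identify $T_B\A \cong i\Omega^{4m-3}$, so the infinitesimal variation is $(a,\tilde a,b)$ with $a\in\Omega^1$, $\tilde a\in\Omega^{4m-3}$, and $b\in\bigoplus_{k=1}^{2m-3}\Omega^{2k+1}$. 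Under this identification the $B$-Coulomb gauge becomes $-2\diff\colon\Omega^{4m-3}\to\Omega^{4m-2}$, and $a+b+\tilde a$ already fills out \emph{all} of $\Omega^{\odd}$. The form operator is then manifestly $2(\diff+\diff^*)\colon\Omega^{\odd}\to\Omega^{\even}$ up to signs and factors of $i$, so there is no ``gluing through $\beta_{2m-1}$'' to verify and no separate check that two half-complexes assemble correctly. Your Poincar\'e-duality description is equivalent but requires the sign-tracking you flag; the paper's trick sidesteps it entirely.
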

\begin{proof}
To ensure the equations fit nicely on the page, we use the shorthand
\begin{align*}
E_+(A,\beta,\phi) &= F_A + F_{\beta,+} + C_{\beta,+} - q(\phi)\\
E_-(B,\beta,\psi) &= (-1)^{m+1}i (*F_B) + F_{\beta,-} + C_{\beta,-} - q(\psi)
\end{align*}
so that the curvature equations are $E_+ = 0 = E_-$. The equations~\eqref{SW4m-2-Dirac-plus}--\eqref{SW4m-2-curvature-minus} define a map
\begin{gather*}
\SW 
	\colon 
		\A \times \A \times \bigoplus_{k=1}^{2m-3} \Omega^{2k+1}
		\times \Gamma(S_+) \times \Gamma(S_-)
	\to
		\bigoplus_{k=1}^{2m-2} s_{2k}\Omega^{2k}
		\times \Gamma(S_-) \times \Gamma(S_+)\\
\SW(A,B,\beta,\phi,\psi)
	=
		\left( 
			E_+(A,\beta,\phi)
			+
			E_-(B,\beta,\psi),
			D_{A,\beta,+}\phi,
			D_{B,\beta,-}\psi
		\right)
\end{gather*}
The zeros of the map $\SW$ are precisely solutions to~\eqref{SW4m-2-Dirac-plus}--\eqref{SW4m-2-curvature-minus}.

$\A$ is an affine space modelled on $i\Omega^1$. Using the Hodge star, we consider the second copy of $\A$, where $B$ lives, to be modelled on $i \Omega^{4m-3}$. Now an infinitesimal variation $\delta(A,B,\beta,\phi,\psi) = (2ia, 2i\tilde{a}, b, \sigma, \xi)$ has $a \in \Omega^1$, $\tilde{a} \in \Omega^{4m-3}$, $b \in \bigoplus_{k=1}^{2m-3}\Omega^{2k+1}$,  $\sigma \in \Gamma(S_+)$ and $\xi \in \Gamma(S_-)$. The point is that as $a, \tilde{a}$ vary, $a + b + \tilde{a}$ fills out the space $\Omega^{\odd}$ of \emph{all} odd degree forms. 

Recall that the gauge group consists of maps $M \to S^1 \times S^1$, with the first factor acting on $(A,\phi)$ and the second on $(B, \psi)$. So we supplement the linearisation of $\SW$ with two gauge fixing conditions, namely $2\diff^* \colon \Omega^1 \to \Omega^0$, for the Coulomb gauge on $A$ and $-2 \diff \colon \Omega^{4m-3} \to \Omega^{4m-2}$ which corresponds to Coulomb gauge for the gauge action on $B$, together with the Hodge star which we used to identify $T_B \A \cong i\Omega^{4m-3}$. Now following the same arguments as in the proof of Proposition~\ref{SW-odd-elliptic} we see that the ellipticity of the equations modulo gauge is equivalent to that of the operator
\[
L 
	\colon 
		\Omega^{\odd} \oplus \Gamma(S_+) \oplus \Gamma(S_-)
	\to
		s\Omega^{\even} \oplus \Gamma(S_-) \oplus \Gamma(S_+)
\]
where $L(a,b,\tilde{a},\sigma,\xi)$ is equal to
\[		
\left( 
	2\diff^*a 
	+ 
	2i\diff a + F_{b,+} + C_{b,+} 
	+ 
	F_{b,-} + C_{b,-} + 2(-1)^{m+1}i\diff^* \tilde{a} 
	-
	2 \diff \tilde{a},
	D_{A}\sigma, D_{B}\xi
\right)
\]
The point is that now the first component is essentially the operator $2(\diff + \diff^*)$ acting on $a + b + \tilde{a}$, together with some signs and factors of $i$. This doesn't affect the invertibility of the symbol or the index. Since $\diff + \diff^*$, $D_A$ and $D_B$ are all elliptic, the same is true for $L$ and hence the whole system modulo gauge. The index is equal to $\ind(L) = \ind(\diff + \diff^*) + \ind D_A + \ind D_B$ where $ \diff + \diff^*\colon \Omega^{\odd} \to \Omega^{\even}$, $D_A \colon \Gamma(S_+) \to \Gamma(S_-)$  and $D_B \colon \Gamma(S_-) \to \Gamma(S_+)$. The Dirac operators are of opposite chirality and so their indices cancel, whilst the index of $\diff + \diff^*$ is $-\chi(M)$, giving the result.
\end{proof}

\section{Weitzenb\"ock formulae}\label{Weitzenbock}

In this section we compute the Weitzenb\"ock formula for a Dirac operator of the form $D_A + c(\beta)$ where $\beta \in \Omega^*(M,\C)$ is a (possibly inhomogeneous degree) form. To begin we work in the following setting. Let $(M,g)$ be a Riemannian manifold of dimension $n$ and $(E,h) \to M$ a Hermitian vector bundle on $M$. We pick a unitary connection $A$ in $E$. Throughout this section, we will calculate using a local coframe $e_j$ which is stationary at a point $p$ (with respect to the Levi-Civita connection). We write $\nabla_j$ for the corresponding $\nabla^A$-derivative in the direction dual to $e_j$.

\begin{lemma}\label{change-Laplacian}
Let $B \in \Omega^1(\u(E))$ be locally given by $B = \sum_j e_j \otimes B_j$ for sections $B_j$ of $\u(E)$. Then, when acting on sections of $E$, 
\[
\left( \nabla_A + B \right)^* \left( \nabla_A + B\right)
	=
		\nabla_A^*\nabla_A
		-
		2 \sum_j B_j \circ \nabla_j
		-
		\sum_j \left( \left( \nabla_j B_j\right) + B_j^2\right)
\]
\end{lemma}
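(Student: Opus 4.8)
The plan is to compute $(\nabla_A + B)^*(\nabla_A+B)$ directly by expanding the square and identifying the adjoint of each piece, working in the stationary coframe $e_j$ at the point $p$ so that all Christoffel symbols vanish there and the formal adjoint $\nabla_A^*$ is simply $-\sum_j \nabla_j$ at $p$.

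First I would write $\nabla_A + B$ in the local coframe as $\sum_j e_j \otimes (\nabla_j + B_j)$, so that the operator we want is $\sum_{j}(\nabla_j + B_j)^{\dagger}(\nabla_j + B_j)$ summed appropriately — more precisely, $(\nabla_A+B)^*(\nabla_A+B) = -\sum_j (\nabla_j + B_j)(\nabla_j + B_j)$ at $p$, using that $e_j$ is orthonormal and stationary there and that $(\nabla_j + B_j)^*$ acting on the corresponding component is $-(\nabla_j + B_j) - (\text{divergence terms that vanish at }p)$. The cleanest route is to note that for a unitary connection $\nabla^A + B$ with $B$ taking values in $\u(E)$, the connection is again unitary, so $(\nabla_A+B)^*(\nabla_A+B)$ is the associated rough Laplacian, which at the stationary point $p$ equals $-\sum_j (\nabla_j + B_j)^2$.

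Then I would expand $-\sum_j (\nabla_j + B_j)^2 = -\sum_j \bigl(\nabla_j^2 + \nabla_j \circ B_j + B_j \circ \nabla_j + B_j^2\bigr)$. The term $-\sum_j \nabla_j^2$ is exactly $\nabla_A^*\nabla_A$ at $p$. For the cross terms, I would use the Leibniz rule $\nabla_j(B_j s) = (\nabla_j B_j)s + B_j \nabla_j s$ (here $\nabla_j B_j$ means the induced connection on $\u(E)$ applied to $B_j$), so that $\nabla_j \circ B_j = (\nabla_j B_j) + B_j \circ \nabla_j$ as operators. Hence $-\sum_j(\nabla_j \circ B_j + B_j \circ \nabla_j) = -\sum_j\bigl((\nabla_j B_j) + 2 B_j \circ \nabla_j\bigr)$. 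Collecting everything gives precisely
\[
\nabla_A^*\nabla_A - 2\sum_j B_j \circ \nabla_j - \sum_j\bigl((\nabla_j B_j) + B_j^2\bigr),
\]
as claimed. Since both sides are well-defined operators independent of the coframe, the identity established at an arbitrary point $p$ holds everywhere.

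The only real subtlety — and the step I would be most careful about — is justifying that at the stationary point $p$ one has $\nabla_A^* = -\sum_j \nabla_j$ and that $(\nabla_A + B)^*(\nabla_A+B)$ genuinely equals $-\sum_j(\nabla_j+B_j)^2$ there, i.e. that the first-order terms coming from differentiating the coframe/metric contribute nothing at $p$. This is the standard fact that a stationary coframe kills the Levi-Civita Christoffel symbols at $p$; once that is invoked, the computation is purely algebraic bookkeeping with the Leibniz rule, and there is no genuine obstacle. One should simply take care that $\nabla_j B_j$ denotes the full induced connection on $\u(E)\subset\End(E)$ (equivalently the commutator-corrected derivative), so that the Leibniz expansion is valid.
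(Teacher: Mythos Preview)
Your proof is correct and follows essentially the same approach as the paper: both compute at the stationary point $p$, use that $B_j\in\u(E)$ gives $B_j^*=-B_j$, and apply the Leibniz rule to the cross term. The only cosmetic difference is that the paper expands $(\nabla_A+B)^*(\nabla_A+B)$ into the four pieces $\nabla_A^*\nabla_A + \nabla_A^*(B\,\cdot\,) + B^*\nabla_A + B^*B$ and evaluates each separately, whereas you first recognise the operator as the rough Laplacian of the new unitary connection and write it directly as $-\sum_j(\nabla_j+B_j)^2$ at $p$; the subsequent algebra is identical.
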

\begin{proof}
For $\phi \in \Gamma(E)$, 
\begin{align*}
\left( \nabla_A + B \right)^* \left( \nabla_A + B\right) \phi
	&=
		\nabla_A^*\nabla_A \phi
		+
		\nabla_A^*\left( B \phi \right)
		+
		B^* \left( \nabla_A \phi \right)
		+
		B^*B \phi\\
	&=
		\nabla_A^*\nabla_A \phi
		-
		\sum \nabla_j (B_j \phi)
		-
		\sum B_j (\nabla_j \phi)
		-
		\sum B_j^2 \phi\\
	&=
		\nabla_A^* \nabla_A \phi 
		-
		2
		\sum B_j (\nabla_j \phi)
		-
		\sum \left( 
		B_j \left( \nabla_j \phi \right) + B_j^2
		\right)\phi
\qedhere
\end{align*}
\end{proof}
We now consider the case when $(M,g)$ has a $\Spinc$-structure, with $S \to M$ the spin bundle, and a choice of unitary connection $A$ in $L$, determining a spin-connection in $S$, and Dirac operator $D_A$. We pick $\beta \in \Omega^*(M,\C)$ and define 
\[
D_{A,\beta} = D_A + c(\beta) \colon \Gamma(S) \to \Gamma(S)
\]
Meanwhile, let $B \in \Omega^1(\u(S))$ given by 
\[
B = - \frac{1}{2} \sum_j e_j \otimes \left( c(e_j) \circ c(\beta) + c(\beta)^* \circ c(e_j)\right)
\]
Whilst $B$ is at first sight only defined locally, one checks that this definition is independent of the choice of orthonormal coframe $e_j$ and so gives a globally defined section $B \in \Omega^1(\u(S))$. From here we define a unitary connection in $S$ by 
\[
\nabla_{A,\beta} = \nabla_A + B
\]
\begin{proposition}\label{Weitzenbock-proposition}
The connection $\nabla_{A,\beta}$ and Dirac operator $D_{A,\beta}$ satisfy the following Weitzenb\"ock formula:
\begin{align*}
D_{A,\beta}^* D_{A,\beta} - \nabla_{A,\beta}^* \nabla_{A,\beta}
	&=
		\frac{s}{4} + \frac{1}{2} c(F_A)\\
	&\phantom{=}
		\quad	
		+
		\frac{1}{2} \sum \left( 
			c(e_j)  \circ c(\nabla_j \beta) - c(\nabla_j\beta)^* \circ c(e_j)
			\right)\\
	&\phantom{=}
		\qquad		
		+
		\frac{1}{4} \sum \left( 
			c(e_j) \circ c(\beta) + c(\beta)^* \circ c(e_j)
		\right)^2
		+ 
		c(\beta)^* \circ c(\beta)
\end{align*}
where $s$ is the scalar curvature of $(M,g)$. 
\end{proposition}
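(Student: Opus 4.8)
The plan is to expand $D_{A,\beta}^*D_{A,\beta}$ by writing $D_{A,\beta} = D_A + c(\beta)$ and treating the three cross terms, then to compare the result with $\nabla_{A,\beta}^*\nabla_{A,\beta}$ as computed by Lemma~\ref{change-Laplacian}. First I would record the classical Lichnerowicz--Schr\"odinger formula $D_A^2 = \nabla_A^*\nabla_A + \frac{s}{4} + \frac12 c(F_A)$, which is the $\beta = 0$ case. Then, since $c(\beta)$ is generally neither self-adjoint nor skew-adjoint, I would write $D_{A,\beta}^* = D_A + c(\beta)^*$ and expand
\begin{equation*}
D_{A,\beta}^*D_{A,\beta}
  = D_A^2 + D_A\circ c(\beta) + c(\beta)^*\circ D_A + c(\beta)^*\circ c(\beta).
\end{equation*}
The last term is already the final summand appearing in the statement, and it is purely algebraic, so nothing more need be done with it.

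The heart of the computation is the middle pair $D_A\circ c(\beta) + c(\beta)^*\circ D_A$. Writing $D_A = \sum_j c(e_j)\nabla_j$ in the stationary coframe and using the Leibniz rule $\nabla_j(c(\beta)\phi) = c(\nabla_j\beta)\phi + c(\beta)\nabla_j\phi$, I get
\begin{equation*}
D_A\circ c(\beta)
  = \sum_j c(e_j)\, c(\nabla_j\beta) + \sum_j c(e_j)\, c(\beta)\,\nabla_j,
\qquad
c(\beta)^*\circ D_A = \sum_j c(\beta)^*\, c(e_j)\,\nabla_j.
\end{equation*}
So the sum splits into a zeroth-order piece $\sum_j c(e_j)c(\nabla_j\beta)$ and a first-order piece $\sum_j\big(c(e_j)c(\beta) + c(\beta)^*c(e_j)\big)\nabla_j$. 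Now invoke Lemma~\ref{change-Laplacian} with $B = -\frac12\sum_j e_j\otimes\big(c(e_j)c(\beta) + c(\beta)^*c(e_j)\big)$, so that $B_j = -\frac12\big(c(e_j)c(\beta)+c(\beta)^*c(e_j)\big)$: the lemma gives $\nabla_{A,\beta}^*\nabla_{A,\beta} = \nabla_A^*\nabla_A - 2\sum_j B_j\nabla_j - \sum_j(\nabla_j B_j + B_j^2)$. The first-order term $-2\sum_j B_j\nabla_j = \sum_j\big(c(e_j)c(\beta)+c(\beta)^*c(e_j)\big)\nabla_j$ matches exactly the first-order piece above, which is precisely why this $B$ was chosen. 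Subtracting $\nabla_{A,\beta}^*\nabla_{A,\beta}$ from $D_{A,\beta}^*D_{A,\beta}$ therefore cancels all derivative terms, leaving only zeroth-order contributions: $\frac{s}{4}+\frac12 c(F_A)$ from the Lichnerowicz part; $\sum_j c(e_j)c(\nabla_j\beta)$ from the cross term; $c(\beta)^*c(\beta)$ from the remaining cross term; and $\sum_j(\nabla_j B_j + B_j^2)$ recovered from the lemma with a $+$ sign.

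It remains to reconcile the leftover algebra with the stated right-hand side. The term $\sum_j B_j^2 = \frac14\sum_j\big(c(e_j)c(\beta)+c(\beta)^*c(e_j)\big)^2$ is already in the desired form. For the rest I must show
\begin{equation*}
\sum_j c(e_j)\,c(\nabla_j\beta) + \sum_j \nabla_j B_j
  = \tfrac12\sum_j\big(c(e_j)c(\nabla_j\beta) - c(\nabla_j\beta)^*c(e_j)\big),
\end{equation*}
using $\nabla_j B_j = -\frac12\big(c(e_j)c(\nabla_j\beta) + c(\nabla_j\beta)^*c(e_j)\big)$ (since the coframe is stationary at $p$, $\nabla$ passes through $c(e_j)$), which gives $\sum_j c(e_j)c(\nabla_j\beta) + \sum_j\nabla_j B_j = \frac12\sum_j c(e_j)c(\nabla_j\beta) - \frac12\sum_j c(\nabla_j\beta)^*c(e_j)$ — exactly the claimed middle line. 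I would also note the global well-definedness of $B$ claimed before the statement; if not already justified, one checks the coframe-independence by a short $\SO(n)$-equivariance argument. I expect the main obstacle to be purely bookkeeping: keeping the adjoints straight (Clifford multiplication by a general inhomogeneous form is neither symmetric nor antisymmetric), and making sure the stationarity of the coframe is used correctly so that $\nabla_j$ commutes with $c(e_j)$ at $p$ but $\nabla_j\beta$ genuinely contributes. No single step is deep; the care is in the signs.
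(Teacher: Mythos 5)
Your proposal is correct and follows essentially the same route as the paper: expand $D_{A,\beta}^*D_{A,\beta}$, apply the standard Lichnerowicz--Schr\"odinger formula to $D_A^2$, match the first-order cross terms with Lemma~\ref{change-Laplacian} via the choice $B_j=-\tfrac12\bigl(c(e_j)\circ c(\beta)+c(\beta)^*\circ c(e_j)\bigr)$, and then combine the leftover zeroth-order terms using stationarity of the coframe at $p$. Your explicit verification that $\sum_j c(e_j)c(\nabla_j\beta)+\sum_j\nabla_jB_j=\tfrac12\sum_j\bigl(c(e_j)c(\nabla_j\beta)-c(\nabla_j\beta)^*c(e_j)\bigr)$ is exactly the substitution the paper leaves to the reader, so nothing is missing.
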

\begin{remark}\label{Weitzenbock-remarks}
~\begin{enumerate}
\item
At first sight, the two summations on right-hand side are only locally defined, but one can check that they are each independent of the choice of local orthonormal frame and so are actually globally defined expressions. We use this fact in the proof to compute at the point $p$ in which the frame $e_j$ has vanishing derivatives. 
\item
The first line in on the right-hand side contains the remainder terms which don't depend on $\beta$, namely those one sees in the ``standard'' Weitzenb\"ock formula; the second line contains terms which are linear in the first derivatives of $\beta$, this is the ``principal part'' of the remainder (as far as $\beta$ is concerned); the third line contains the terms which are non-linear in $\beta$, they are quadratic and depend only algebraically on $\beta$. 
\item
The first formula of this type, where $\beta \in \Omega^3$, was proved by Bismut~\cite{Bismut}. 
\end{enumerate}
\end{remark}

\begin{proof}[Proof of Proposition~\ref{Weitzenbock-proposition}]
The proof is a direct calculation:
\begin{align*}
D_{A,\beta}^*D_{A,\beta} 
	&= 
		D^2_A+ D_A \circ c(\beta) + c(\beta)^* \circ D_A + c(\beta)^* \circ c(\beta)\\
	&=
		\nabla_A^*\nabla_A + \frac{s}{4} + \frac{1}{2} c(F_A) \\
	&\phantom{=}
		\quad
		+
		\sum c(e_j) \circ \left(c(\nabla_j \beta) + c(\beta) \circ \nabla_j\right)
		+
		\sum c(\beta)^* \circ c(e_j) \circ \nabla_j
		+
		c(\beta)^* \circ c(\beta)
\end{align*}
where we have used the standard Weitzenb\"ock formula for $D_A^2$ in the second line. 

We now isolate the terms in this which are first-order:
\begin{align*}
D^*_{A,\beta}D_{A,\beta}
	&=
		\nabla_A^*\nabla_A + \frac{s}{4} + \frac{1}{2} c(F_A) \\
	&\phantom{=}
		\quad	
		+
		\sum \left[ c(e_j) \circ c(\beta) + c(\beta)^* \circ c(e_j) \right]\circ \nabla_j\\
	&\phantom{=}
		\qquad
		+
		\sum c(e_j) \circ c(\nabla_j \beta) + c(\beta)^* + c(\beta)
\end{align*}
The coefficient of $\nabla_j$ is
\[
\sum \left[ c(e_j) \circ c(\beta) + c(\beta)^* \circ c(e_j) \right]
\]
Comparing this with the formula from Lemma~\ref{change-Laplacian}, we recognise this as the first-order term in the rough Laplacian created by adding 
\[
B = -\frac{1}{2} \sum e_j \otimes \left( c(e_j) \otimes c(\beta) + c(\beta)^*\circ c(e_j)\right)
\]
to the connection $A$. This explains the definition of $\nabla_{A,\beta} = \nabla_A + B$. From here Lemma~\ref{change-Laplacian} gives
\begin{align*}
D^*_{A,\beta}D_{A,\beta}
	&=
		\nabla_{A,\beta}^* \nabla_{A,\beta} + \frac{s}{4} + \frac{1}{2} c(F_A)\\
	&\phantom{=}
		\quad	
		+
		\sum \nabla_j B_j
		+
		\sum B_j^2
		+
		\sum c(e_j) \circ c(\nabla_j \beta)
		+
		c(\beta)^* c(\beta)		
\end{align*}
Substituting $B_j = -\frac{1}{2}(c(e_j)\otimes c(\beta) + c(\beta)^*\circ c(e_j))$ and using the fact that $\nabla e_j = 0$ at $p$ (and that Clifford multiplication is parallel) we obtain the stated formula. This holds at $p$ but, as mentioned in Remark~\ref{Weitzenbock-remarks}, this is sufficient to prove the formula globally. 
\end{proof}

We will relate the terms involving $\nabla_j \beta$ to $c(\diff \beta)$ and $c(\diff^*\beta)$. To do this we need the following simple lemmas. Both are well known, but we include the short proofs for completeness. 

\begin{lemma}\label{Clifford-exterior-derivative}
For $\gamma \in \Omega^k$, 
\begin{align*}
c(\diff \gamma) 
	&= 
		\frac{1}{2} \sum \left( 
			c(e_j) \circ c(\nabla_j \gamma) + (-1)^{k} c(\nabla_j \gamma) \circ c(e_j)
		\right)\\
c(\diff^* \gamma)
	&=
		\frac{1}{2} \sum \left( 
			c(e_j) \circ c(\nabla_j \gamma) - (-1)^{k} c(\nabla_j \gamma) \circ c(e_j)
		\right)
\end{align*}
\end{lemma}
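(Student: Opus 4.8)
The plan is to prove the two formulae in Lemma~\ref{Clifford-exterior-derivative} by a direct computation, using the standard local formulae for $\diff$ and $\diff^*$ in terms of the covariant derivative, combined with the Clifford relation. First I would recall that at the point $p$, where the coframe $e_j$ is stationary for the Levi--Civita connection, one has the well-known expressions
\[
\diff \gamma = \sum_j e_j \wedge \nabla_j \gamma,
\qquad
\diff^* \gamma = - \sum_j \iota_{e_j} \nabla_j \gamma .
\]
Applying $c$ and using that Clifford multiplication is a homomorphism from $(\Lambda^*, \text{Clifford product})$, the task reduces to expressing $c(e_j \wedge \eta)$ and $c(\iota_{e_j} \eta)$ in terms of $c(e_j)$ and $c(\eta)$ for a $k$-form $\eta = \nabla_j \gamma$.

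The key algebraic input is the standard pair of identities relating wedge and contraction to Clifford multiplication: for a $1$-form $e$ and a $k$-form $\eta$,
\[
c(e) \circ c(\eta) = c(e \wedge \eta) - c(\iota_{e} \eta),
\qquad
c(\eta) \circ c(e) = (-1)^{k}\bigl( c(e \wedge \eta) + c(\iota_e \eta)\bigr).
\]
These follow from the conventions of \cite{Friedrich} (they are essentially the definition of the Clifford action of a form together with the Clifford relation $c(e)c(f) + c(f)c(e) = -2\langle e,f\rangle$). Adding and subtracting gives
\[
c(e \wedge \eta) = \tfrac{1}{2}\bigl( c(e)\circ c(\eta) + (-1)^k c(\eta) \circ c(e)\bigr),
\qquad
c(\iota_e \eta) = \tfrac{1}{2}\bigl( (-1)^k c(\eta)\circ c(e) - c(e) \circ c(\eta)\bigr).
\]
Now I would substitute $e = e_j$, $\eta = \nabla_j\gamma$, sum over $j$, and use the local formulae for $\diff\gamma$ and $\diff^*\gamma$ above; the first substitution yields the stated expression for $c(\diff\gamma)$ directly, and the second yields $c(\diff^*\gamma) = -\sum_j c(\iota_{e_j}\nabla_j\gamma) = \tfrac{1}{2}\sum_j\bigl(c(e_j)\circ c(\nabla_j\gamma) - (-1)^k c(\nabla_j\gamma)\circ c(e_j)\bigr)$, as claimed. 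Since both sides are globally defined tensorial expressions, verifying the identity at the arbitrary point $p$ suffices.

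**The main obstacle** is really just bookkeeping with sign conventions: one must be careful that the sign in the Clifford relation and the convention for the Clifford action of a $k$-form (including the factor $(-1)^{k(k\pm1)/2}$-type conventions in \cite{Friedrich}) are applied consistently, so that the $(-1)^k$ in the final formulae comes out with the stated sign. There is no genuine analytic or conceptual difficulty; once the two wedge/contraction identities are pinned down, the lemma is immediate. I would also remark that the $k=1$ case recovers the familiar fact that $c(\diff\alpha)$ and $c(\diff^*\alpha)$ are the self-adjoint/skew-adjoint parts of $\sum_j c(e_j)\circ c(\nabla_j\alpha)$, which is a useful sanity check on the signs.
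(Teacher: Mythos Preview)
Your proposal is correct and follows essentially the same route as the paper: both proofs use the local formulae $\diff\gamma = \sum e_j\wedge\nabla_j\gamma$ and $\diff^*\gamma = -\sum e_j\lrcorner\nabla_j\gamma$ together with the Clifford-algebra identities expressing $c(e\wedge\eta)$ and $c(\iota_e\eta)$ as the symmetric and antisymmetric combinations of $c(e)\circ c(\eta)$ and $(-1)^k c(\eta)\circ c(e)$. The only cosmetic difference is that the paper states those two identities directly, whereas you obtain them by adding and subtracting $c(e)\circ c(\eta) = c(e\wedge\eta) - c(\iota_e\eta)$ and its companion.
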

\begin{proof}
We begin by recalling two consequences of the  Clifford relations. For $\eta \in \Omega^1$ and $\gamma \in \Omega^k$, 
\begin{align*}
c( \eta \lrcorner \gamma)
	&=
		-\frac{1}{2} \left( 
			c(\eta) \circ c(\gamma) + (-1)^{k+1} c(\gamma) \circ c(\eta)
		\right),\\
c(\eta \wedge \gamma) 
	&= 
		\frac{1}{2} \left( 
		c(\eta ) \circ c(\gamma) + (-1)^k c(\gamma) \circ c(\eta)
		\right)
\end{align*}
In the first formula, $\eta \lrcorner \gamma \in \Omega^{k-1}$ is the contraction of $\eta$ and $\gamma$ defined via the metric: $\eta \lrcorner \gamma = \iota_{\eta^\#} (\gamma)$  where $\eta^\#$ is the vector metric-dual to $\eta$. 

The Lemma now follows from these formulae and the fact that $\diff \gamma = \sum e_j \wedge \nabla_j \gamma$ and $\diff^* \gamma =-  \sum e_j \lrcorner \nabla_j \gamma$. 
\end{proof}

\begin{lemma}\label{Clifford-Hodge}
Let $\gamma \in \Omega^k$. Then 
\[
c(*\gamma) 
	=  
		\begin{cases}
		\pm(-1)^{m+k(k+1)/2} i^m c(\gamma) 
			& \text{on }S_{\pm} \text{ if } \dim(M) = 2m\\
		(-1)^{m+1 + k(k+1)/2} i^{m+1} c(\gamma)
			& \text{on all of } S \text{ if } \dim(M) = 2m+1
		\end{cases}
\]
\end{lemma}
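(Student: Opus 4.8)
The plan is to reduce the statement to the single identity $c(*\gamma) = c(\gamma)\circ c(\dvol)^{-1}$ (up to a scalar) and then to insert the known value of $c(\dvol)$ on the spinor bundle. First I would recall the pointwise algebraic fact that, for $\gamma \in \Lambda^k$, the Hodge star can be written as $*\gamma = (-1)^{k(n-k)}\,\gamma \lrcorner \dvol$, i.e. contraction of $\gamma$ against the volume form; this is a standard identity in exterior algebra which holds fibrewise and requires no differentiation. Passing to Clifford multiplication, and using that for a top-degree form $\dvol$ and any $\gamma$ the Clifford product $c(\gamma)c(\dvol)$ differs from $c(\gamma \lrcorner \dvol)$ only by a sign depending on $k$ and $n$ (the wedge part drops out since $\gamma \wedge \dvol = 0$ for $k \geq 1$, and the $k=0$ case is trivial), one gets $c(*\gamma) = \epsilon(n,k)\, c(\gamma)\circ c(\dvol)$ for an explicit sign $\epsilon(n,k) = \pm 1$.

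Next I would substitute the value of $c(\dvol)$ stated in the introduction: in dimension $n = 2m$ one has $i^m c(\dvol) = \pm 1$ on $S_\pm$, hence $c(\dvol) = \mp i^{-m} = \mp(-1)^m i^m$ on $S_\pm$ (using $i^{-m} = (-1)^m i^m$ when... more carefully, $i^{-m} = \overline{i^m} \cdot |i^m|^{-2}$, so better to write $1/i^m$ and simplify at the end); in dimension $n = 2m+1$ one has $i^m c(\dvol) = 1$ on all of $S$, so $c(\dvol) = i^{-m} = (-1)^m i^m$ there. Plugging these into $c(*\gamma) = \epsilon(n,k)\,c(\gamma)\circ c(\dvol)$ collapses everything to a scalar multiple of $c(\gamma)$, and the bookkeeping of the powers of $i$ and the signs must be arranged to match the two cases in the statement: $\pm(-1)^{m + k(k+1)/2} i^m$ in even dimensions and $(-1)^{m+1+k(k+1)/2} i^{m+1}$ in odd dimensions.

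The main obstacle is purely the sign and power-of-$i$ accounting: one must be careful that $\epsilon(n,k)$ is computed consistently with the Clifford conventions of \cite{Friedrich} being used here (in particular the sign in $c(\eta \wedge \gamma)$ and $c(\eta \lrcorner \gamma)$ recorded in the proof of Lemma~\ref{Clifford-exterior-derivative}), and that the combinatorial identity $(-1)^{k(n-k)} \cdot (\text{sign from moving } c(\dvol) \text{ past } c(\gamma)) \cdot i^{\mp m}$ simplifies to the exponent $k(k+1)/2$ modulo $2$ appearing in the claim. A clean way to control this is to verify the formula first on a decomposable $\gamma = e_{i_1}\wedge\cdots\wedge e_{i_k}$ in an orthonormal coframe, where $c(\gamma) = c(e_{i_1})\cdots c(e_{i_k})$ and $c(*\gamma) = \pm c(e_{j_1})\cdots c(e_{j_{n-k}})$ with $\{j_\bullet\}$ the complementary indices, compute $c(\gamma)c(\dvol)$ by cancelling the repeated Clifford generators (each cancellation $c(e)^2 = -1$ contributing a sign), and count the transpositions needed to reorder; the exponent $k(k+1)/2$ emerges as the number of such transpositions modulo $2$. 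Since the target expression is linear in $\gamma$ and the decomposable forms span, this suffices.
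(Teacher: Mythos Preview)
Your proposal is correct in outline, and the fallback you sketch at the end (verify on decomposable $\gamma$) is essentially what the paper does, but the paper's execution is considerably more direct and avoids both the contraction formula and the transposition counting you describe. The paper chooses $\gamma = e_1 \wedge \cdots \wedge e_k$ so that $\gamma \wedge *\gamma = \dvol$ with the two factors supported on disjoint indices; hence $c(\dvol) = c(\gamma)\circ c(*\gamma)$ immediately. Multiplying on the left by $c(\gamma)$ and using $c(\gamma)^2 = (-1)^{k(k+1)/2}$ gives $c(*\gamma) = (-1)^{k(k+1)/2}\, c(\gamma)\circ c(\dvol)$ in one line, after which one just substitutes the value of $c(\dvol)$. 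The exponent $k(k+1)/2$ thus drops out of the single identity $c(\gamma)^2 = (-1)^{k(k+1)/2}$ rather than from any reordering argument, and the route through $*\gamma = (-1)^{k(n-k)}\gamma\lrcorner\dvol$ and iterated contraction identities is unnecessary.

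One small slip to flag: with the conventions recorded in the introduction, in dimension $n = 2m+1$ it is $i^{m+1}c(\dvol) = 1$ (the introduction states $i^{m}c(\dvol)=1$ in dimension $2m-1$), not $i^{m}c(\dvol)=1$ as you wrote; this is exactly the kind of bookkeeping hazard you anticipated, and the paper's shortcut minimises exposure to it.
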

\begin{proof}
By linearity it suffices to prove this for $\gamma = e_1 \wedge \cdots \wedge e_k$ where $e_1, \ldots, e_n$ are an oriented orthonormal frame. For this choice of $\gamma$ we have $c(\dvol) = c(\gamma \wedge *\gamma) = c(\gamma) \circ c(*\gamma)$. Meanwhile, $c(\gamma)^2 = (-1)^{k(k+1)/2}$. So $c(*\gamma) = (-1)^{k(k+1)/2}c(\gamma)\circ c(\dvol)$. Now
\[
c(\dvol) 
	= 
		\begin{cases}
		\pm (-1)^m i^m& \text{on }S_{\pm} \text{ if } \dim(M) = 2m\\
		(-1)^{m+1} i^{m+1} & \text{on all of } S \text{ if } \dim(M) = 2m+1
		\end{cases}
\]
which gives the result.
\end{proof}

\begin{remark}\label{self-dual-action}
On a manifold of dimension $4m$, when $\gamma \in \Omega^{2m}$ has middle degree, we see from Lemma~\ref{Clifford-Hodge} that $c(*\gamma) = \pm c(\gamma)$ on $S_{\pm}$. So on $S_\pm$, $c(\gamma) = c(\gamma^{\pm})$.
\end{remark}

We are now in a position to prove that the curvature equations in our Seiberg--Witten equations do indeed prescribe the principal parts of the various Weitzenb\"ock identities.

\subsection{Odd dimensions}

In dimension~$2m+1$ we use $D_{A,\beta} = D_{A} + c(\beta)$ where $\beta$ has the form
\begin{equation}
\beta 
	= 
	\sum_{k=1}^{m-1} 
		s_{2k+1} \beta_{2k+1} 
	+ 
	\sum_{k=1}^m i s_{2m-2k} * \beta_{2k+1}
\label{odd-dimensions-beta}
\end{equation}
where the components $\beta_j \in \Omega^j$ are real-valued odd-degree forms. 

\begin{proposition}\label{odd-dim-Weitzenbock-proposition}
\[
D_{A,\beta}^* D_{A,\beta}
=
\nabla_{A,\beta}^*\nabla_{A,\beta}
+
\frac{s}{4}
+
\frac{1}{2} c(F_A + F_\beta + C_\beta)
+ 
Q(\beta)
\]
where $F_\beta$, $C_\beta$ are given by \eqref{F-beta-2m+1} and \eqref{C-beta-2m+1} respectively, and $Q(\beta)$ is a quadratic algebraic expression in $\beta$. 
\end{proposition}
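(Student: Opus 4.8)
The plan is to specialize the general Weitzenb\"ock formula of Proposition~\ref{Weitzenbock-proposition} to the specific inhomogeneous form $\beta$ given in~\eqref{odd-dimensions-beta}, and to show that the ``principal part'' line (the terms linear in $\nabla_j\beta$) collapses to $\frac{1}{2}c(F_\beta + C_\beta)$, while the remaining lines are algebraic in $\beta$ and can be absorbed into $Q(\beta)$. The first observation is that since $\beta$ is a sum of real odd-degree forms multiplied by the scalars $s_k$ (and factors of $i$), each Clifford term $s_k c(\beta_k)$ is self-adjoint by the defining property of the function $s$, and likewise $i s_{2m-2k} c(*\beta_{2k+1})$ is self-adjoint; hence $c(\beta)^* = c(\beta')$ for the ``conjugate'' combination obtained by flipping the signs of the skew-adjoint pieces. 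More usefully, I would split $c(\beta) = P + N$ into its self-adjoint part $P$ and skew-adjoint part $N$ at the level of Clifford multiplication: in the principal-part line $\frac12\sum\big(c(e_j)c(\nabla_j\beta) - c(\nabla_j\beta)^* c(e_j)\big)$, replacing $c(\beta)$ by $P$ contributes $\frac12\sum\big(c(e_j)c(\nabla_j\beta) - c(\nabla_j\beta)c(e_j)\big)$ and replacing it by $N$ contributes $\frac12\sum\big(c(e_j)c(\nabla_j\beta) + c(\nabla_j\beta)c(e_j)\big)$.

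Next I would invoke Lemma~\ref{Clifford-exterior-derivative}: for a homogeneous $\gamma\in\Omega^k$, the combination $\frac12\sum(c(e_j)c(\nabla_j\gamma) + (-1)^k c(\nabla_j\gamma)c(e_j))$ equals $c(\diff\gamma)$ and the combination with the opposite sign equals $c(\diff^*\gamma)$. So for each homogeneous piece of $\beta$ one gets either $c(\diff\,\cdot)$ or $c(\diff^*\,\cdot)$ depending on the parity $(-1)^k$ of its degree and on whether it sits in the self-adjoint or skew-adjoint part of $c(\beta)$. Running through the two sums in~\eqref{odd-dimensions-beta}: the terms $s_{2k+1}\beta_{2k+1}$ have odd degree $2k+1$, so $(-1)^k = -1$ there and they produce $\diff$-type and $\diff^*$-type contributions with the signs bookkept by $s_{2k+1}$ and a sign identity for $s$; the terms $i s_{2m-2k}*\beta_{2k+1}$ have even degree $4m+2-(2k+1)=$ odd... one must track the degree of $*\beta_{2k+1}$, which is $(2m+1)-(2k+1)=2m-2k$, an even number, so there $(-1)^{k'} = +1$ and one gets the complementary $\diff$/$\diff^*$ pairing. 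Using $\diff^* = \pm * \diff *$ and Lemma~\ref{Clifford-Hodge} to convert $c(*\,\cdot)$ into a power of $i$ times $c(\cdot)$, every contribution is rewritten in terms of $c(\diff\beta_{2k+1})$ and $c(\diff^*\beta_{2k+1})$, and matching the accumulated scalars against the definitions~\eqref{F-beta-2m+1} and~\eqref{C-beta-2m+1} of $F_\beta$ and $C_\beta$ gives exactly $\frac12 c(F_\beta + C_\beta)$. The overall factor $2$ in those definitions is precisely what compensates the $\frac12$ in the Weitzenb\"ock formula versus the $\frac12$ inside Lemma~\ref{Clifford-exterior-derivative} combining to a full $c(\diff\beta)$.

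Finally, the third line of Proposition~\ref{Weitzenbock-proposition}, namely $\frac14\sum(c(e_j)c(\beta) + c(\beta)^* c(e_j))^2 + c(\beta)^*c(\beta)$, involves no derivatives of $\beta$: it is a fixed universal algebraic (indeed quadratic, after expanding the square and using that $\sum c(e_j)Xc(e_j)$ is a linear operation on endomorphisms $X$) expression in the components $\beta_{2k+1}$ and their Hodge duals, with coefficients built from the $s$-scalars. I would simply \emph{define} $Q(\beta)$ to be this expression, note it is self-adjoint (since $D_{A,\beta}^*D_{A,\beta}$, $\nabla_{A,\beta}^*\nabla_{A,\beta}$, $s/4$ and $\frac12 c(F_A + F_\beta + C_\beta)$ all are), and observe it depends only algebraically and quadratically on $\beta$ because $*$ is algebraic and $\beta\mapsto c(\beta)$ is linear. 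The main obstacle is purely bookkeeping: correctly tracking the degree-dependent signs $(-1)^k$ from Lemma~\ref{Clifford-exterior-derivative}, the powers of $i$ from Lemma~\ref{Clifford-Hodge} (which differ between $S_\pm$ in even dimensions but are uniform here since $\dim M = 2m+1$), the scalars $s_k$, and the relation $\diff^*\beta_{2k+1} = (-1)^{\cdots}*\diff *\beta_{2k+1}$, and verifying that all of these conspire to reproduce precisely the cumbersome sign $(-1)^{km+m+k+1+m(m+1)/2}$ appearing in~\eqref{C-beta-2m+1}. I expect this sign-chase, rather than any conceptual point, to be where all the work lies; the structure of the argument is forced by Proposition~\ref{Weitzenbock-proposition} and Lemmas~\ref{Clifford-exterior-derivative} and~\ref{Clifford-Hodge}.
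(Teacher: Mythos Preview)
Your proposal is essentially the paper's own proof: reduce to Proposition~\ref{Weitzenbock-proposition}, split the principal-part line according to the self-adjoint/skew-adjoint decomposition of $c(\beta)$, apply Lemma~\ref{Clifford-exterior-derivative} to each homogeneous piece, and then use Lemma~\ref{Clifford-Hodge} together with $\diff(*\beta_{2k+1}) = -*\diff^*\beta_{2k+1}$ to convert everything into $c(\diff\beta_{2k+1})$ and $c(\diff^*\beta_{2k+1})$, finally matching against~\eqref{F-beta-2m+1} and~\eqref{C-beta-2m+1}. One slip to correct: you write that ``$i s_{2m-2k} c(*\beta_{2k+1})$ is self-adjoint'', but in fact $s_{2m-2k}c(*\beta_{2k+1})$ is self-adjoint (that is the defining property of $s$), so the extra factor of $i$ makes this piece \emph{skew}-adjoint --- and it is precisely this skew-adjointness that, combined with the even degree $2m-2k$ of $*\beta_{2k+1}$, forces Lemma~\ref{Clifford-exterior-derivative} to return $c(\diff(*\beta_{2k+1}))$ rather than $c(\diff^*(*\beta_{2k+1}))$, which is what you need to land on $C_\beta$ after Hodge-dualising.
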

\begin{proof}
By Proposition~\ref{Weitzenbock-proposition}, we must show that 
\begin{equation}
\sum_j \left(c(e_j) \circ c(\nabla_j \beta) - c(\nabla_j \beta)^* \circ c(e_j) \right)
=
c\left( F_\beta + C_\beta \right)
\label{what-we-need-to-show}
\end{equation}
When we substitute~\eqref{odd-dimensions-beta} in the left-hand side of~\eqref{what-we-need-to-show}, and in particular in $c(\nabla_j \beta)^*$, we will have two types of term for which we will have to take the adjoint. Terms of type 1 have the form 
\[
c\left(  s_{2k+1} \nabla_j \beta_{2k+1} \right), \quad k=1,\ldots, m-1
\] 
whilst terms of type 2 have the form 
\[
c\left(is_{2m-2k}\nabla_j (*\beta_{2k+1})\right), \quad k=1,\ldots, m-1.
\] 
Now, by definition of $s_k$, for any form $\gamma$ of degree $k$, $c(s_k \gamma)$ is self-adjoint. So the type 1 terms are self-adjoint, whilst type 2 are skew-adjoint. Similarly the factor in $is_{2m+1}\beta_{2m+1}$ ensures that the terms $c(is_{2m+1}\nabla_j \beta_{2m+1})$ are also skew-adjoint. With this observation, we see that the left-hand side of~\eqref{what-we-need-to-show} is
\begin{multline*}
\sum_{k=1}^{m-1} s_{2k+1}
	\sum_{j} \left( 
		c(e_j) \circ c(\nabla_j \beta_{2k+1})
		-
		c(\nabla_j \beta_{2k+1}) \circ c(e_j)
	\right)\\
+
i \sum_{k=1}^{m} s_{2m-2k} 
	\sum_j \left( 
		c(e_j) \circ c(\nabla_j (*\beta_{2k+1}))
		+
		c(\nabla_j (* \beta_{2k+1})) \circ c(e_j)
	\right)
\end{multline*}
Applying Lemma~\ref{Clifford-exterior-derivative} we see that the left-hand side of~\eqref{what-we-need-to-show} is
\begin{equation}
2\sum_{k=1}^{m-1} s_{2k+1} c(\diff \beta_{2k+1})
+
2i \sum_{k=1}^{m} s_{2m-2k} c(\diff (*\beta_{2k+1}))
\label{intermediate-check-Weitzenbock}
\end{equation}
By definition of $s_{k}$, $s_{2k+1} = s_{2k+2}$ for all $k$ and so the first term of~\eqref{intermediate-check-Weitzenbock} is exactly $F_\beta$. Meanwhile, on an odd-dimensional manifold,  $\diff^* = (-1)^k*\diff ~*$ when acting on $k$-forms, whilst $*^2 = 1$ in every degree. So $d (*\beta_{2k+1})= - * \diff^*\beta_{2k+1}$. By Lemma~\ref{Clifford-Hodge},
\begin{align*}
i s_{2m-2k}c(d(*\beta_{2k+1})) 
	&= 
		- i s_{2m-2k} c(* \diff^*\beta_{2k+1}) \\
	&= 
		(-1)^{m+1+k(2k+1)}i^{m}s_{2m-2k} c(\diff^*\beta_{2k+1}) \\\
	&=
		(-1)^{m+k+1}i^{m}s_{2m-2k} c(\diff^*\beta_{2k+1})
\end{align*}
Now for all possible values of $m,k$, we have $i^ms_{2m-2k} = (-1)^{km+ \frac{m(m+1)}{2}} s_{2k}$. This means that the second term of~\eqref{intermediate-check-Weitzenbock} is equal to 
\[
2\sum_{k=1}^{m}
	(-1)^{km + m+k + 1 + \frac{m(m+1)}{2}} s_{2k} c(\diff^* \beta_{2k+1}) = c(C_\beta)
\]
as claimed. 
\end{proof}

\subsection{Dimension $4m$}

In dimension $4m$, we use $D_{A,\beta} = D_{A} + c(\beta)$ on $S_+$ where $\beta$ has the form
\begin{equation}
\beta 
	= 
\sum_{k=1}^{m-1} \Big( s_{2k+1} \beta_{2k+1} + s_{4m-2k-1} *\beta_{2k+1} )\Big)
\label{beta-4m}
\end{equation}
where the components $\beta_k \in \Omega^j$ are real-valued odd-degree forms. 

\begin{proposition} \label{4m-dim-Weitzenbock-proposition}
When acting on sections of $S_+$, 
\[
D^*_{A,\beta} D_{A,\beta}
=
\nabla^*_{A,\beta} \nabla_{A,\beta}
+
\frac{s}{4}
+ \frac{1}{2} c \left( 
F_A + F_\beta^+ + C_\beta
\right)
+Q(\beta)
\]
where $F_{\beta}^+$ and $C_{\beta}$ are given by~\eqref{F-beta-plus-4m}  and \eqref{C-beta-4m} respectively and $Q(\beta)$ is a quadratic algebraic expression in $\beta$. 
\end{proposition}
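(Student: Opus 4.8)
The plan is to follow the same strategy as in the proof of Proposition~\ref{odd-dim-Weitzenbock-proposition}, using Proposition~\ref{Weitzenbock-proposition} as the starting point. By that proposition, everything reduces to identifying the principal part of the remainder, i.e.\ to showing that when acting on sections of $S_+$,
\[
\sum_j \left( c(e_j) \circ c(\nabla_j \beta) - c(\nabla_j \beta)^* \circ c(e_j) \right)
=
c\left( F_\beta^+ + C_\beta \right),
\]
with $\beta$ the inhomogeneous form~\eqref{beta-4m}. The remaining terms from Proposition~\ref{Weitzenbock-proposition} — the $\frac{s}{4}$, the $\frac12 c(F_A)$, and the quadratic-in-$\beta$ expression — are precisely $\frac{s}{4} + \frac12 c(F_A) + Q(\beta)$ with $Q(\beta)$ algebraic and quadratic, so they need no further comment.

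First I would split the sum over the components of $\beta$. For each $k = 1,\dots,m-1$ there are two contributions: a ``type~1'' term $c(s_{2k+1}\nabla_j\beta_{2k+1})$ and a ``type~2'' term $c(s_{4m-2k-1}\nabla_j(*\beta_{2k+1}))$. The key structural observation, exactly as before, is that $c(s_\ell\gamma)$ is self-adjoint for a real $\ell$-form $\gamma$, so the type~1 terms are self-adjoint and contribute through the commutator $c(e_j)c(\nabla_j\beta_{2k+1}) - c(\nabla_j\beta_{2k+1})c(e_j)$, while the type~2 terms are anti-self-adjoint and contribute through the anticommutator $c(e_j)c(\nabla_j(*\beta_{2k+1})) + c(\nabla_j(*\beta_{2k+1}))c(e_j)$. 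Applying Lemma~\ref{Clifford-exterior-derivative} (with $k$ odd, so the commutator picks out $2c(\diff\,\cdot)$ and the anticommutator picks out $2c(\diff^*\,\cdot)$ up to the degree sign, which one tracks carefully) turns the whole left-hand side into
\[
2\sum_{k=1}^{m-1} s_{2k+1}\, c(\diff\beta_{2k+1})
+
2\sum_{k=1}^{m-1} s_{4m-2k-1}\, c\big(\diff(*\beta_{2k+1})\big).
\]
The first sum, using $s_{2k+1} = s_{2k+2}$, is visibly $F_\beta^+$ except that its top piece $2s_{2m}c(\diff\beta_{2m-1})$ must be reconciled with $2s_{2m}c(\diff^+\beta_{2m-1})$; here Remark~\ref{self-dual-action} does the job, since on $S_+$ a middle-degree form acts the same as its self-dual part, so $c(\diff\beta_{2m-1}) = c(\diff^+\beta_{2m-1})$ on $S_+$. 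For the second sum I would use, as in the odd case, the identity $\diff(*\beta_{2k+1}) = \pm *\diff^*\beta_{2k+1}$ in dimension $4m$ (tracking the sign $(-1)^{k}$ or similar from $\diff^* = \pm *\diff\,*$ on forms of this degree, together with $*^2$ acting on the relevant degrees), then Lemma~\ref{Clifford-Hodge} to rewrite $c(*\diff^*\beta_{2k+1})$ on $S_+$ as a power of $i$ times $c(\diff^*\beta_{2k+1})$, and finally an elementary identity of the form $i^{m}s_{4m-2k-1} = (-1)^{m+k+1}s_{2k}$ (the analogue of the $i^m s_{2m-2k} = (-1)^{km+m(m+1)/2}s_{2k}$ relation used before) to match the result with the stated formula~\eqref{C-beta-4m} for $C_\beta$.

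The main obstacle is entirely bookkeeping: getting every sign right in the chain Lemma~\ref{Clifford-exterior-derivative} $\to$ the $\diff^* = \pm *\diff\,*$ identity $\to$ Lemma~\ref{Clifford-Hodge} $\to$ the $i$-power normalisation, now with the dimension-$4m$ Hodge star (so $*^2 = (-1)^{k(4m-k)} = +1$ on the degrees in play, and the Lemma~\ref{Clifford-Hodge} factor $\pm(-1)^{m+k(k+1)/2}i^{2m}$ on $S_\pm$), and checking that these conspire to produce exactly the coefficient $(-1)^{m+k+1}$ and the star-free $\diff^*$ appearing in~\eqref{C-beta-4m}. The one genuinely new ingredient compared to the odd-dimensional proof is the middle-degree subtlety for $\diff\beta_{2m-1}$, which is handled cleanly by Remark~\ref{self-dual-action}; everything else is a transcription of the argument for Proposition~\ref{odd-dim-Weitzenbock-proposition} with $2m$ replaced by $4m$ and the summation range truncated at $k = m-1$. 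I would end by noting that, as in the earlier propositions, the identity is first established at the point $p$ where the frame is stationary and then holds globally by Remark~\ref{Weitzenbock-remarks}.
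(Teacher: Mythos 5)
Your overall route is the paper's: reduce via Proposition~\ref{Weitzenbock-proposition} to the identity $\sum_j\bigl(c(e_j)\circ c(\nabla_j\beta)-c(\nabla_j\beta)^*\circ c(e_j)\bigr)=c(F_\beta^++C_\beta)$, split $\beta$ into its homogeneous pieces, apply Lemma~\ref{Clifford-exterior-derivative}, convert $\diff(*\beta_{2k+1})$ into $\diff^*\beta_{2k+1}$ via Lemma~\ref{Clifford-Hodge}, and handle the middle-degree term with Remark~\ref{self-dual-action}. But there is a genuine error at the pivotal step. You assert that the ``type~2'' terms $s_{4m-2k-1}c(\nabla_j(*\beta_{2k+1}))$ are anti-self-adjoint and enter through the anticommutator. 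In dimension $4m$ this is false: by the very observation you quote, $s_\ell c(\gamma)$ is self-adjoint for \emph{any} real $\ell$-form $\gamma$, and unlike the odd-dimensional operator~\eqref{Dirac-2m+1}, the Dirac operator~\eqref{Dirac-4mD} carries no extra factor of $i$ on the $*\beta_{2k+1}$ terms; so both types of terms are self-adjoint and both enter through commutators. This is not cosmetic: if the type~2 terms really were skew-adjoint, the anticommutator on the odd-degree form $*\beta_{2k+1}$ would give $2c(\diff^*(*\beta_{2k+1}))=2c(*\diff\beta_{2k+1})$ by Lemma~\ref{Clifford-exterior-derivative} --- a term built from $\diff\beta_{2k+1}$ --- and the argument would never produce $C_\beta$. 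The displayed intermediate formula you then use (with $c(\diff(*\beta_{2k+1}))$ in the second sum) is the correct one, but it is the output of the commutator, not of the mechanism you state; you have transplanted the odd-dimensional bookkeeping (where the explicit $i$ makes type~2 skew-adjoint and $*\beta_{2k+1}$ has \emph{even} degree) without adjusting it, and this adjointness bookkeeping is precisely what the coefficients in~\eqref{Dirac-4mD} were designed to control.

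A secondary point: the ``elementary identity'' you invoke, $i^m s_{4m-2k-1}=(-1)^{m+k+1}s_{2k}$, is false as written (for $m=2$, $k=1$ the left side is $-i$ and the right side is $i$). What is actually needed is simpler: $s_{2k+1}=s_{2k+2}$ and $s_{4m-2k-1}=s_{2k}$, combined with $\diff(*\beta_{2k+1})=-*\diff^*\beta_{2k+1}$ and Lemma~\ref{Clifford-Hodge} on $S_+$ in dimension $2\cdot(2m)$, whose factor on the $2k$-form $\diff^*\beta_{2k+1}$ is $(-1)^{2m+k(2k+1)}i^{2m}=(-1)^{m+k}$; together these give exactly the coefficient $(-1)^{m+k+1}s_{2k}$ of~\eqref{C-beta-4m}. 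Your treatment of the top piece via Remark~\ref{self-dual-action}, namely $c(\diff\beta_{2m-1})=c(\diff^+\beta_{2m-1})$ on $S_+$, is correct and is exactly what the paper does.
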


\begin{proof}
By Proposition~\ref{Weitzenbock-proposition} we must show that 
\begin{equation}
\sum_j \left(c(e_j) \circ c(\nabla_j \beta) - c(\nabla_j \beta)^* \circ c(e_j) \right)
=
c\left( F^+_\beta + C_\beta \right)
\label{what-we-need-to-show2}
\end{equation}
When we substitute~\eqref{beta-4m} in to the left-hand side of~\eqref{what-we-need-to-show2} we encounter two types of term:
\[
s_{2k+1} c\left(\nabla_j \beta_{2k+1}\right),
\quad \text{and} \quad
s_{4m-2k-1}c \left( \nabla_j (*\beta_{2k+1})\right)
\]
By definition of $s_{j}$ both of these are self-adjoint. So the left-hand side of~\eqref{what-we-need-to-show2} is
\begin{multline*}
\sum_{k=1}^{m-1} s_{2k+1}
	\sum_j \left( 
		c(e_j) \circ c(\nabla_j \beta_{2k+1}) 
			- 
		c(\nabla_j \beta_{2k+1}) \circ c(e_j)
	\right)\\
+ \sum_{k=1}^{m-1} s_{4m-2k-1}
	\sum_j \left(
		c(e_j) \circ c(\nabla_j (*\beta_{2k+1}))
			-
		c(\nabla_j (*\beta_{2k+1})) \circ c(e_j) 
	\right)
\end{multline*}
By Lemma~\ref{Clifford-exterior-derivative} we see that the left-hand side of~\eqref{what-we-need-to-show2} is
\[
2 \sum_{k=1}^{m-1} \left( 
s_{2k+1} c(\diff \beta_{2k+1}) + s_{4m-2k-1} c(\diff (*\beta_{2k+1})
\right)
\]
On an even-dimensional manifold, $\diff ^* = - * \diff~*$ in every degree, whilst in even degrees, $*^2 = 1$. So $\diff (* \beta_{2k+1}) = - * \diff^* \beta_{2k+1}$. Now by Lemma~\ref{Clifford-Hodge}, we have
\begin{align*}
c(\diff (*\beta_{2k+1}))
	&=
		- c(* \diff^* \beta_{2k+1})\\
	&=
		(-1)^{m+1+k(2k+1)}c(\diff^*\beta_{2k+1})\\
	&=
		(-1)^{m+1+k} c(\diff^*\beta_{2k+1})
\end{align*}
(We replace $m$ in Lemma~\ref{Clifford-Hodge} by $2m$ since for us $n=4m$, and $k$ by $2k$ since we take $*$ of the $2k$-form $\diff^*\beta_{2k+1}$.) One now checks that $s_{2k+1} = s_{2k+2}$ and $s_{4m-2k-1}=s_{4m-2k}=s_{2k}$. Putting the pieces together we see that the left-hand side of~\eqref{what-we-need-to-show2} is
\[
\sum_{k=1}^{m-1} s_{2k+2} c(\diff \beta_{2k+1}) + (-1)^{m+1+k} s_{4m-2k}c(\diff^* \beta_{2k+1})
\]
Finally,  for a form $\gamma$ of degree $2m$ acting on $S_+$, $c(\gamma) =c(\gamma^+)$ (see Remark~\ref{self-dual-action}). So the left-hand side of~\eqref{what-we-need-to-show} is equal to $c(F_\beta^+ + C_\beta)$ as claimed. 
\end{proof} 

\subsection{Dimension $4m-2$}

In dimension $4m-2$ we consider two Dirac operators. The first acts on $\Gamma(S_+)$ and has the form $D_{A,\beta,+} = D_A + c(\beta)$ where
\begin{equation}
\beta
	= 
		\sum_{k=1}^{m-2} 
			s_{2k+1} \beta_{2k+1} 
		+ 
		\sum_{k=1}^{m-1}
			s_{4m-2k-3} *\beta_{2k+1}
\label{beta-plus}
\end{equation}
The second acts on $\Gamma(S_-)$ and has the form $D_{B,\beta,-} = D_B + c(\beta)$ where 
\begin{equation}
\beta 
	=
		\sum_{k=m-1}^{2m-3} 
			s_{2k+1} \beta_{2k+1} 
		+
		\sum_{k=m}^{2m-3}
			s_{4m-2k-3} *\beta_{2k+1}		
\label{beta-minus}
\end{equation}
In both cases the $\beta_j \in \Omega^j$ are odd-degree forms. 

\begin{proposition}\label{4m-2-dim-Weitzenbock-proposition}
~\begin{enumerate}
\item
When acting on sections of $S_+$, with $\beta$ as in~\eqref{beta-plus}, 
\[
D^*_{A,\beta,+} D_{A,\beta,+}
	=
		\nabla_{A,\beta,+}^*\nabla_{A,\beta,+}
		+
		\frac{s}{4}
		+
		\frac{1}{2} c \left( 
		F_A + F_{\beta,+} + C_{\beta,+}
		\right)
		+
		Q(\beta)
\]
where $F_\beta$ and $C_\beta$ are given by~\eqref{F-beta-plus-4m-2} and \eqref{C-beta-plus-4m-2} respectively, and $Q(\beta)$ is a quadratic algebraic expression in $\beta$. 
\item
When acting on sections of $S_-$, with $\beta$ as in~\eqref{beta-minus}, 
\[
D^*_{B,\beta,-} D_{B,\beta,-}
	=
		\nabla_{B,\beta,-}^*\nabla_{B,\beta,-}
		+
		\frac{s}{4}
		+
		\frac{1}{2} c \left( 
		(-1)^{m+1}i(*F_B) + F_{\beta,-} + C_{\beta,-}
		\right) 
		+
		Q(\beta)
\]
where $F_{\beta,-}$ and $C_{\beta,-}$ are given by~\eqref{F-beta-plus-4m-2} and \eqref{C-beta-plus-4m-2} respectively, and $Q(\beta)$ is a quadratic algebraic expression in $\beta$. 
\end{enumerate}
\end{proposition}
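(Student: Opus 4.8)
The plan is to follow the template of Propositions~\ref{odd-dim-Weitzenbock-proposition} and~\ref{4m-dim-Weitzenbock-proposition} verbatim, applying the master formula of Proposition~\ref{Weitzenbock-proposition} separately to each of the two Dirac operators. In each case the scalar curvature term $\frac{s}{4}$, the curvature term $\frac12 c(F_A)$ (resp.\ $\frac12 c(F_B)$), and the quadratic algebraic term (which we lump into $Q(\beta)$) appear automatically, so the entire content is to identify the principal part
\[
\sum_j \left( c(e_j) \circ c(\nabla_j \beta) - c(\nabla_j\beta)^* \circ c(e_j) \right)
\]
with $c(F_{\beta,+} + C_{\beta,+})$ in the first case and with $c(F_{\beta,-} + C_{\beta,-})$ in the second. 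For part~(1), I would substitute the expression~\eqref{beta-plus} for $\beta$, split each $c(\nabla_j\beta)$ into its two families of terms $s_{2k+1}c(\nabla_j\beta_{2k+1})$ and $s_{4m-2k-3}c(\nabla_j(*\beta_{2k+1}))$, observe using the definition of $s_j$ that \emph{all} such terms are self-adjoint, and then apply Lemma~\ref{Clifford-exterior-derivative} exactly as in the $4m$-dimensional case. This converts the principal part into a sum of terms $2s_{2k+1}c(\diff\beta_{2k+1}) + 2s_{4m-2k-3}c(\diff(*\beta_{2k+1}))$. Using $\diff(*\beta_{2k+1}) = -*\diff^*\beta_{2k+1}$ (valid since we take $*$ of an even-degree form on an even-dimensional manifold) together with Lemma~\ref{Clifford-Hodge} (with $n = 4m-2$, i.e.\ $m$ replaced by $2m-1$, and degree $2k$), one gets a sign $(-1)^{2m-1 + 1 + k(2k+1)} = (-1)^{k+1}$ on $S_+$; combined with the identities $s_{2k+1} = s_{2k+2}$ and $s_{4m-2k-3} = s_{4m-2k-2} = s_{2k}$ this should reproduce precisely $c(F_{\beta,+}) + c(C_{\beta,+})$ as given by~\eqref{F-beta-plus-4m-2} and~\eqref{C-beta-plus-4m-2}, up to bookkeeping of the range of summation (noting that $\beta_{2m-1}$ contributes only through its $*$-term in $D_{A,\beta,+}$, which is why $C_{\beta,+}$ runs to $k=m-1$ while $F_{\beta,+}$ stops at $k=m-2$).

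For part~(2) the argument is structurally identical but with two twists. First, the operator is $D_B + c(\beta)$ acting on $S_-$, so the Weitzenb\"ock remainder involves $\frac12 c(F_B)$; to match the form of the curvature equation~\eqref{SW4m-2-curvature-minus}, I would rewrite $c(F_B)$ on $S_-$ using Lemma~\ref{Clifford-Hodge} (with $\gamma = F_B$ a $2$-form, $n = 4m-2$), which gives $c(F_B) = (-1)^{m+1} i\, c(*F_B)$ on $S_-$ — this is exactly the factor appearing in the statement. Second, the Hodge-star manipulations on the $\beta$-terms now land on $S_-$, changing the sign coming from $c(\dvol)$, and the $*(\diff^*\beta_{2k+1})$ appearing in $C_{\beta,-}$ (rather than $\diff^*\beta_{2k+1}$ bare) reflects that on the negative-spinor side one is comparing against forms of degree $> 2m$. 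I would carefully track the degree shifts: $\beta_{2k+1}$ in the range $m-1 \le k \le 2m-3$ has $\diff^*\beta_{2k+1}$ of degree $2k$, and its Hodge dual $*(\diff^*\beta_{2k+1})$ has degree $4m-2-2k$, which is the degree recorded in $s_{4m-2k-2}$ in~\eqref{C-beta-minus-4m-2}. Checking that the sign $(-1)^{m+1}$ in front of $C_{\beta,-}$, and the shift in the lower limit of summation from $k=m$ (in $D_{B,\beta,-}$) to $k=m-1$ (in $C_{\beta,-}$, because $\beta_{2m-1}$ enters $D_{B,\beta,-}$ through its $*$-term $s_{2m-1}c(*\beta_{2m-1})$, contributing $\diff^*\beta_{2m-1}$), all come out correctly is the crux.

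The main obstacle I anticipate is purely the sign- and index-bookkeeping: there are three independent sources of signs (the $(-1)^{k(k+1)/2}$ and $(-1)^{m}$ or $(-1)^{m+1}$ from Lemma~\ref{Clifford-Hodge} on $S_\pm$, the parity factor $(-1)^k$ from Lemma~\ref{Clifford-exterior-derivative}, and the hidden powers of $i$ relating $s_j$ at different degrees), and these must be combined consistently across the slightly asymmetric ranges of summation in the two Dirac operators, where $\beta_{2m-1}$ plays a special ``overlap'' role. I would handle this by first treating the generic terms $\beta_{2k+1}$ with $2k+1 < 2m-1$ (for part~1) and $2k+1 > 2m-1$ (for part~2) in complete parallel with the $4m$-dimensional proof, and only then dealing separately with the boundary term $\beta_{2m-1}$; I expect no genuinely new idea is required beyond what is already in Propositions~\ref{odd-dim-Weitzenbock-proposition} and~\ref{4m-dim-Weitzenbock-proposition}, only care. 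The quadratic terms require no work: they are collected into $Q(\beta)$ as in the earlier propositions, being manifestly algebraic and quadratic in $\beta$ by the shape of the last line of Proposition~\ref{Weitzenbock-proposition}.
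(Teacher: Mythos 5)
Your strategy is exactly the paper's: apply the master formula of Proposition~\ref{Weitzenbock-proposition} to each of the two Dirac operators, identify the principal part with $c(F_{\beta,\pm}+C_{\beta,\pm})$ using Lemma~\ref{Clifford-exterior-derivative} together with $\diff(*\beta_{2k+1})=-*\diff^*\beta_{2k+1}$ and Lemma~\ref{Clifford-Hodge}, and absorb the quadratic terms into $Q(\beta)$. Your explicit observation for part~(2) that $c(F_B)=(-1)^{m+1}i\,c(*F_B)$ on $S_-$ is correct, and is in fact spelled out more than in the paper, whose proof of part~(2) only records that an extra sign appears when Lemma~\ref{Clifford-Hodge} is applied on negative spinors.

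However, the phase bookkeeping you quote for part~(1) is wrong as written and would not close. First, Lemma~\ref{Clifford-Hodge} in dimension $4m-2$ produces not just a sign but also a factor $i^{2m-1}$, which you drop; moreover the exponent you write evaluates to $(-1)^{2m-1+1+k(2k+1)}=(-1)^{k}$, not $(-1)^{k+1}$. Second, the identity $s_{4m-2k-3}=s_{4m-2k-2}=s_{2k}$ is the $4m$-dimensional identity and fails here: shifting the dimension by $2$ shifts the residues mod $4$, and the correct relation is $s_{4m-2k-3}=s_{2k+1}=(-1)^{k}i\,s_{2k}$. The proof only works because these two ``extra'' factors of $\pm i$ cancel: on $S_+$ one has $c(\diff(*\beta_{2k+1}))=(-1)^{k}i^{2m-1}c(\diff^*\beta_{2k+1})$, so $s_{4m-2k-3}\,c(\diff(*\beta_{2k+1}))=(-1)^{k}i\,s_{2k}\cdot(-1)^{k}i^{2m-1}\,c(\diff^*\beta_{2k+1})=(-1)^{m}s_{2k}\,c(\diff^*\beta_{2k+1})$, which is precisely how the prefactor $(-1)^{m}$ in~\eqref{C-beta-plus-4m-2} (and, with the extra sign on $S_-$, the $(-1)^{m+1}$ in~\eqref{C-beta-minus-4m-2}) arises. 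With the identities as you stated them the $(-1)^{m}$ would not appear and you would instead be left with a spurious $k$-dependent factor of $\pm i$, so the principal part would not match $C_{\beta,+}$. (Be aware when comparing with the paper that its own intermediate exponent in this step contains a typo; its final displayed conclusion $(-1)^m s_{2k}c(\diff^*\beta_{2k+1})$ is the correct one.) Once this is corrected, the rest of your plan, including the special treatment of the overlap term $\beta_{2m-1}$ and the resulting shifted summation ranges, coincides with the paper's proof.
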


\begin{proof}
We begin with the proof of the first equation. By Proposition~\ref{Weitzenbock-proposition} we must show that when $\beta$ is given by~\eqref{beta-plus}, and we are acting on $S_+$,
\begin{equation}
\sum_j \left( 
	c(e_j) \circ c(\nabla_j \beta) - c(\nabla_j \beta)^* \circ c(e_j)
\right)
	=
		c \left( 
		F_{\beta,+} + C_{\beta,+}
		\right)
\label{what-we-need-to-show3}
\end{equation}
Substituting~\eqref{beta-plus} in the left-hand side of`\eqref{what-we-need-to-show3}, we encounter two types of term:
\[
s_{2k+1}c(\nabla_j \beta_{2k+1})
	\quad \text{and} \quad
s_{4m-2k-3}c( \nabla_j(* \beta_{2k_1}))
\]
In both cases, by definition of $s_j$, they are self-adjoint. As in the case of dimension~$4m$, we then see from Lemma~\ref{Clifford-exterior-derivative} that the left-hand side of~\eqref{what-we-need-to-show3} is equal to 
\begin{equation}
2 \sum_{k=1}^{m-2} s_{2k+1} c(\diff \beta_{2k+1})
+
2 \sum_{k=1}^{m-1} s_{4m-2k-3} c(\diff(* \beta_{2k+1}))
\label{two-sums}
\end{equation}

The first sum here is equal to $F_{\beta,+}$. For the second note that, on even-dimensional manifold, $\diff^* = - * \diff~*$ in every degree, whilst in even degrees $*^2=1$. So $\diff (*\beta_{2k+1}) = - * \diff^*\beta_{2k+1}$. Next, by Lemma~\ref{Clifford-Hodge}, we have
\[
c(d(* \beta_{2k+1}))
	=
		- c(*(\diff^*\beta_{2k+1)}))
	=
		(-1)^{2m+1+k(2k+1)}i^{2m-1} c(\diff^*\beta_{2k+1})
\]
We now use the fact that
\[
s_{4m-2k-3} = s_{2k+1} = (-1)^{k}i s_{2k}
\]
to obtain
\[
s_{4m-2k-2} c(\diff(*\beta_{2k+1}))
	=
		(-1)^m s_{2k} c(\diff^*\beta_{2k+1})
\]
So the second sum in~\eqref{two-sums} is $C_{\beta,+}$, which completes the proof of the first part of the statement.  

The proof of the second part is almost identical. The additional factor of $-1$ in~\eqref{C-beta-plus-4m-2} appears because we are acting on the bundle of negative spinors and so there is an extra factor of $-1$ when we apply Lemma~\ref{Clifford-Hodge}.
\end{proof}

\subsection{Zeroth-order terms}

In this section we compute the zeroth order term $Q(\beta)$ in the Weitzenb\"ock remainders, in some specific dimensions. This term is crucial, especially its sign,  when one applies the maximum principle to deduce a priori estimates. It is here that the difference in difficulty between $n=3,4$ and $n>4$ becomes evident.

By Proposition~\ref{Weitzenbock-proposition} the term in question is
\begin{equation}
Q(\beta) 
	= 
		c(\beta)^* \circ c(\beta) 
		+  
		\frac{1}{4} \sum_j\left( c(e_j) \circ c(\beta) + c(\beta)^* \circ c(e_j) \right)^2
\label{Q}
\end{equation}
where the precise form of $\beta$ will depend on the dimension under consideration. The first term is non-negative, whilst the second is non-positive. This is because $c(e_j)^* = - c(e_j)$ and so each term in the second sum is a square $S_j^2$ where $S_j = c(e_j) \circ c(\beta) + c(\beta^*)\circ c(e_j)$ is skew-adjoint. This means that there is a competition between these two terms and it is this that makes it impossible in higher dimensions to give a sign to $Q(\beta)$.

In dimension~3 (a case that is already in the literature of course), we consider $D_A + c(\beta)$ where $\beta = i * \beta_3$ where $\beta_3 \in \Omega^3$.  In this case, $c(\beta)$ is multiplication by the function $-i *\beta_3$. It follows that $Q(\beta) = |\beta_3|^2$. In particular it is non-negative. Dimension~4 is even more straightforward since $\beta=0$, and so $Q(\beta)=0$. 

Dimension~5 is the first in which the calculation of $Q(\beta)$ is somewhat involved. We consider $D_A + c(\beta)$ where $\beta = \beta_3 - *\beta_3 + i* \beta_5$  for $\beta_3 \in \Omega^3$ and $\beta_5 \in \Omega^5$. The next lemma shows how $Q(\beta)$ can be expressed as the difference of non-negative terms, given purely in terms of $\beta_3$ and $\beta_5$. 

\begin{lemma}\label{zeroth-order-5D}
On a 5-manifold, with $\beta = \beta_3 - *\beta_3 + i* \beta_5$ we have 
\[
Q(\beta) = c(\beta_3 - *\beta_5)^2 - 4|\beta_3|^2
\]
\end{lemma}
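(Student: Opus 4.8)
The plan is to evaluate $Q(\beta)$ directly from formula~\eqref{Q}, after first rewriting $c(\beta)$ in a shape that exposes the cancellations. In dimension $5$, Lemma~\ref{Clifford-Hodge} (with $m=2$, $k=3$) gives $c(*\beta_3)=i\,c(\beta_3)$, while $*\beta_5$ is a real function $f$, so $c(*\beta_5)=f\,\Id$. Thus $c(\beta)=c(\beta_3)-c(*\beta_3)+i\,c(*\beta_5)=(1-i)\,c(\beta_3)+if$, and since $c(\beta_3)$ is self-adjoint ($s_3=1$) and $f$ is real, $c(\beta)^*=(1+i)\,c(\beta_3)-if$. Expanding, the non-derivative contribution to~\eqref{Q} is
\[
c(\beta)^*\circ c(\beta)=2\,c(\beta_3)^2-2f\,c(\beta_3)+f^2 .
\]

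Next I would compute the skew-adjoint $1$-form $S_j:=c(e_j)\circ c(\beta)+c(\beta)^*\circ c(e_j)$ that enters the second term of~\eqref{Q}. The $\pm if\,c(e_j)$ terms cancel, leaving $S_j=(1-i)\,c(e_j)c(\beta_3)+(1+i)\,c(\beta_3)c(e_j)$. Substituting the degree-$3$ Clifford relations recalled in the proof of Lemma~\ref{Clifford-exterior-derivative}, namely $c(e_j)c(\beta_3)=c(e_j\wedge\beta_3)-c(e_j\lrcorner\beta_3)$ and $c(\beta_3)c(e_j)=-c(e_j\wedge\beta_3)-c(e_j\lrcorner\beta_3)$, this collapses to $S_j=-2\,c(e_j\lrcorner\beta_3)-2i\,c(e_j\wedge\beta_3)$. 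For the next step it is more convenient to keep $S_j=(1-i)\,c(e_j)P+(1+i)\,Pc(e_j)$ with $P:=c(\beta_3)$.

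The heart of the argument is then to evaluate $\tfrac14\sum_j S_j^2$. Expanding $S_j^2$ and using $c(e_j)^2=-1$, the sum over $j$ needs two ingredients. The first is the standard identity $\sum_j c(e_j)c(\gamma)c(e_j)=(-1)^{k+1}(n-2k)\,c(\gamma)$ for $\gamma\in\Omega^k$ on an $n$-manifold (proved by reducing to decomposable $\gamma$), which in dimension $5$ gives $\sum_j c(e_j)Pc(e_j)=-P$. The second is the decomposition $P^2=c(\beta_3)^2=|\beta_3|^2\,\Id+c(Q_4)$ for a real $4$-form $Q_4$: indeed $c(\beta_3)^2$ is a real self-adjoint operator whose only possible nonzero homogeneous Clifford components in dimension $5$ lie in degrees $0,2,4$; the degree-$2$ part vanishes because $c$ of a real $2$-form is skew-adjoint ($s_2=i$); and the degree-$0$ part is $|\beta_3|^2\,\Id$ by taking traces ($\tr c(\beta_3)^2=\tr(c(\beta_3)c(\beta_3)^*)=r\,|\beta_3|^2$ with $r=\rank S$, and $\tr c(Q_4)=0$). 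Applying the first identity to $Q_4$ ($k=4$, $n=5$) gives $\sum_j c(e_j)c(Q_4)c(e_j)=3\,c(Q_4)$. Assembling everything, the $\pm 2iP^2$ terms cancel and I expect $\tfrac14\sum_j S_j^2=-5|\beta_3|^2-c(Q_4)=-4|\beta_3|^2-c(\beta_3)^2$.

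Adding the contributions above,
\[
Q(\beta)=\big(2\,c(\beta_3)^2-2f\,c(\beta_3)+f^2\big)+\big(-4|\beta_3|^2-c(\beta_3)^2\big)=(c(\beta_3)-f)^2-4|\beta_3|^2,
\]
which is $c(\beta_3-*\beta_5)^2-4|\beta_3|^2$ since $f=*\beta_5$ and $c(\beta_3-*\beta_5)=c(\beta_3)-f\,\Id$. The step I expect to be the main obstacle is the third one: pinning down the exact sign in $\sum_j c(e_j)c(\gamma)c(e_j)=(-1)^{k+1}(n-2k)\,c(\gamma)$ under the Riemannian convention $c(e_j)^2=-1$, and carefully justifying the splitting $c(\beta_3)^2=|\beta_3|^2\,\Id+c(Q_4)$ rather than merely asserting it. The remaining steps are mechanical once $c(\beta)$ has been brought to the form $(1-i)\,c(\beta_3)+if$.
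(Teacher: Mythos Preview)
Your proposal is correct and follows essentially the same route as the paper: both compute $c(\beta)=(1-i)c(\beta_3)+i(*\beta_5)$, both rely on the identities $\sum_j c(e_j)c(\beta_3)c(e_j)=-c(\beta_3)$ and $\sum_j c(e_j)c(\beta_3)^2c(e_j)=3c(\beta_3)^2-8|\beta_3|^2$ (which the paper proves via the same decomposition $c(\beta_3)^2=|\beta_3|^2+c(\theta_4)$ you use), and both assemble $Q(\beta)$ from these pieces. Your only organisational difference is that you observe the $*\beta_5$-terms cancel from $S_j$ at the outset, which lets you compute $\tfrac14\sum S_j^2$ in one pass rather than the paper's four-line expansion \eqref{Q1}--\eqref{Q4}; this is a tidier bookkeeping choice but not a different argument.
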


\begin{proof}
By Lemma~\ref{Clifford-Hodge}, $c(*\beta_3) = i c(\beta_3)$ and so, since $c(\beta_3)^* = c(\beta_3)$, we see that
\begin{align*}
c(\beta) 
	&=
		(1-i) c(\beta_3) + i *\beta_5\\
c(\beta)^*
	&=
		(1+i) c(\beta_3) - i * \beta_5\\
c(\beta)^* \circ c(\beta)
	&=
		\left( (1+i) c(\beta_3) - i *\beta_5\right)
		\left( (1-i) c(\beta_3) + i * \beta_5 \right) \\
	&=
		2 c(\beta_3)^2 - 2 (* \beta_5) c(\beta_3) + |\beta_5|^2\\
	&=
		c(\beta) \circ c(\beta)^*
\end{align*}

In what follows will also need two facts:
\begin{align}
\sum c(e_j) \circ c(\beta_3)^2 \circ c(e_j)
	&=
		3 c(\beta_3)^2 - 8 |\beta_3|^2
		\label{conjugate-beta3-squared}\\
\sum c(e_j) \circ c(\beta_3) \circ c(e_j)
	&=
		- c(\beta_3)
		\label{conjugate-beta3}
\end{align}
These kinds of identities are standard in Clifford algebra, but we give the proofs below, both for completeness and also for lack of a specific reference. 

Assuming them for now, however, we can conclude:
\begin{align}
Q(\beta)
	&=
		c(\beta)^*\circ c(\beta) 
		+ 
		\frac{1}{4} \sum_j \left(
			c(e_j) \circ c(\beta) + c(\beta)^*\circ c(e_j)
		\right)^2
		\nonumber\\
	&=
		c(\beta)^*\circ c(\beta) 
		+
		\frac{1}{4}  c(\beta)^* \circ\left( \sum_j c(e_j)^2 \right)\circ c(\beta)
		\label{Q1}\\
	&\phantom{=}
		\quad
		+ 
		\frac{1}{4}\sum_j c(e_j) \circ \left( 
				2 c(\beta_3)^2 - 2 (*\beta_5) c(\beta_3) + |\beta_5|^2
			\right) \circ c(e_j)
		\label{Q2}\\
	&\phantom{=}
		\quad
			+
			\frac{1}{4}\left(\sum_j c(e_j) \circ \left(
				(1-i)c(\beta_3) + i (*\beta_5)
				\right)
				\circ c(e_i)
			\right)\circ  \left(
					(1-i)c(\beta_3) + i (*\beta_5)
				\right)
		\label{Q3}\\
	&\phantom{=}
		\quad
			+
			\frac{1}{4}\left( 
					(1+i) c(\beta_3) - i (*\beta_5) 
				\right)
					 \circ \left(
						\sum_j c(e_j) \circ \left( 
							(1+i) c(\beta_3) - i (*\beta_5) 
						\right) \circ c(e_j)
					\right)
			\label{Q4}
\end{align}
Since $c(e_j)^2 = -1$, the whole first line~\eqref{Q1} is equal to 
\[
-\frac{1}{4} c(\beta)^* \circ c(\beta)
	=
		-\frac{1}{2} c(\beta_3)^2 
		+ \frac{1}{2} (*\beta_5) c(\beta_3) 
		- \frac{1}{4} |\beta_5|^2
\]
In the second line~\eqref{Q2} we apply both~\eqref{conjugate-beta3-squared} and~\eqref{conjugate-beta3} as well as $c(e_j)^2=-1$. This shows that~\eqref{Q2} is equal to:
\[
\frac{3}{2} c(\beta_3)^2 - 4 |\beta_3|^2
+
\frac{1}{2} (*\beta_5)c(\beta_3)
-
\frac{5}{4} |\beta_5|^2
\]
In the next line~\eqref{Q3} we apply~\eqref{conjugate-beta3} to show that the contribution here is equal to:
\[
\frac{1}{4}\left( 
	-(1-i)c(\beta_3) - 5i(*\beta_5)
\right)
\left( 
	(1-i) c(\beta_3) + i (*\beta_5)
\right)
	=
		\frac{i}{2}c(\beta_3)^2 
		- 
		\frac{3}{2}(1+i)(*\beta_5)c(\beta_3)  
		+ 
		\frac{5}{4}|\beta_5|^2
\]
Similarly, we use~\eqref{conjugate-beta3} to show that the overall contribution of the final line~\eqref{Q4} is equal to
\begin{multline*}
\frac{1}{4}\left( 
		(1+i) c(\beta_3) - i (*\beta_5) 
	\right)
	\left( 
	-(1+i)c(\beta_3) + 5i (*\beta_5)
	\right)\\
	=
	-\frac{i}{2}c(\beta_3)^2
	-
	\frac{3}{2}(1-i)(*\beta_5)c(\beta_3)
	+ 
	\frac{5}{4} |\beta_5|^2
\end{multline*}
We now combine the pieces to see that 
\[
Q(\beta) = c(\beta_3)^2 - 2(*\beta_5) c(\beta_3) +|\beta_5|^2 - 4|\beta_3|^2
\]
which simplifies to give the claimed expression.

We complete the proof by showing that~\eqref{conjugate-beta3-squared} and~\eqref{conjugate-beta3} hold. To prove~\eqref{conjugate-beta3-squared}, we first write $\beta_3$ in a local frame as
\[
\beta_3 = \sum_{i_1 < i_2 < i_3} \beta_{i_1,i_2,i_3} e_{i_1 i_2 i_3}
\]
where $e_{i_1i_2i_3}=e_{i_1} \wedge e_{i_2} \wedge e_{i_3}$. Notice that 
\[
c(e_{i_1i_2i_3}) = c(e_{i_1}) \circ c(e_{i_2}) \circ c(e_{i_3})
\] 
In computing $c(\beta_3)^2$ itself, we will encounter 6-fold compositions of the $c(e_i)$. We can then use the Clifford relations  $c(e_i)c(e_j) + c(e_j)c(e_i) = -2 \delta_{ij}$ to permute them and cancel out various terms when adjacent indices are equal.  The end result for each term will be of the form $c(e_{i_1 \ldots i_k})$ for some basis $k$-form $e_{i_1 \ldots i_k}$. Moreover, since terms cancel in pairs, $k$ will be even. Now, note that $c(\beta_3)$ is self-adjoint and so the same is true for $c(\beta_3)^2$. Meanwhile, real 2-forms have skew-adjoint Clifford action. This means that when the above procedure is carried out there are no terms of the form $c(e_{ij})$ for $i<j$ at all. The upshot is that $c(\beta_3)^2 = f + c(\theta)$ where $f$ is real-valued function and $\theta$ is a 4-form. 

The function $f$ comes from products of the form
\[
c(e_{i_1i_2i_3})c(e_{i_1i_2i_3}) =  1
\]
from which it follows that $f = |\beta_3|^2$. For this term we observe that
\[
\sum_j c(e_j) \circ c(f) \circ c(e_j) = - 5|\beta_3|^2
\]

Meanwhile, for $c(\theta)$, when we pre- and post-multiply by $c(e_j)$ we encounter terms of the form
\[
c(e_j) \circ c(e_{i_1i_2i_3i_4}) \circ c(e_j)
\]
where $i_1<i_2<i_3<i_4$. There are two possibilities: either $j \in \{i_1,i_2,i_3,i_4\}$ or all the indices are distinct. In the first case, the Clifford relations give
\[
c(e_j) \circ c(e_{i_1i_2i_3i_4}) \circ c(e_j)
=
 c(e_{i_1i_2i_3i_4})
\] 
In the second case, meanwhile,
\[
c(e_j) \circ c(e_{i_1i_2i_3i_4}) \circ c(e_j)
=
- c(e_{i_1i_2i_3i_4})
\]
For each choice of $i_1,i_2,i_3,i_4$ the first case occurs for 4 values of $j$ and the second just once. So 
\[
\sum_j c(e_j) \circ c(e_{i_1i_2i_3i_4}) \circ c(e_j) = 3 c(e_{i_1i_2i_3i_4})
\]
and hence $\sum_j c(e_j) \circ (\theta) \circ c(e_j) = 3 c(\theta)$. We conclude that 
\[
\sum_j c(e_j) \circ c(\beta_3)^2 \circ c(e_j) = - 5 |\beta_3|^2 + 3 c(\theta) = 3 c(\beta_3)^2 - 8 |\beta_3|^2
\]
as claimed.

The proof of~\eqref{conjugate-beta3} is similar to what we just did with $\theta$. In $\sum c(e_j) \circ c(\beta_3) \circ c(e_j)$ we  encounter terms of the form 
\[
c(e_j) \circ c(e_{i_1i_2i_3}) \circ c(e_j)
\]
for $i_1 <i_2< i_3$. If $j$ is distinct from $i_1,i_2,i_3$ the Clifford relations give us
\[
c(e_j) \circ c(e_{i_1i_2i_3}) \circ c(e_j) = c(e_{i_1i_2i_3})
\]
whilst if $j$ equals one of the other indices, we get
\[
c(e_j) \circ c(e_{i_1i_2i_3}) \circ c(e_j) = -c(e_{i_1i_2i_3})
\]
For given $i_1<i_2<i_3$, there are two values $j$ for which the first alternative occurs, and three for which the second occurs. The upshot is that 
\[
\sum_j c(e_j) \circ c(\beta_3) \circ c(e_j) = - c(\beta_3)\qedhere
\]
\end{proof}

Lemma~\ref{zeroth-order-5D} shows that for the 5-dimensional Seiberg--Witten equations, the zeroth order term can have potentially either sign. If $\beta_3=0$ then $Q(\beta)$ is non-negative, but if $\beta_5=0$ then $Q(\beta)$ is non-positive. In  six dimensions things are potentially even more troublesome, at least from the point of view of the maximum principle, since $Q(\beta)$ is necessarily non-positive. This follows from the next lemma. 

\begin{lemma}
On a 6-manifold, with $\beta = \beta_3$ purely of degree 3, we have $Q(\beta) = - 2|\beta|^2$.
\end{lemma}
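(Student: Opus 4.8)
The plan is to carry out the whole computation algebraically, inside the (complexified) Clifford algebra at a point—equivalently, with endomorphisms of the full spinor bundle $S$, where $c$ is an algebra isomorphism—so the statement for $S_-$ follows a fortiori. Everything rests on the standard Clifford identity
\[
\sum_j c(e_j)\,c(\omega)\,c(e_j)\;=\;(-1)^{p+1}(n-2p)\,c(\omega),\qquad \omega\in\Omega^p,\ n=\dim M,
\]
which one proves by linearity together with the model case $\omega=e_{i_1\cdots i_p}$, in exactly the way the special cases~\eqref{conjugate-beta3-squared} and~\eqref{conjugate-beta3} were established in the proof of Lemma~\ref{zeroth-order-5D} (there $n=5$ and $p=0,3,4$). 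For $n=6$ the multiplier $(-1)^{p+1}(6-2p)$ equals $-6$ when $p=0$, equals $0$ when $p=3$, and equals $2$ when $p=4$; these are the only three values needed.

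First I would make two reductions. Since $\beta=\beta_3$ is a real $3$-form, $c(\beta_3)$ is self-adjoint (this is the case $s_3=1$), so in~\eqref{Q} we have $c(\beta)^*\circ c(\beta)=c(\beta_3)^2$, and the operator $S_j:=c(e_j)\circ c(\beta)+c(\beta)^*\circ c(e_j)$ equals $c(e_j)c(\beta_3)+c(\beta_3)c(e_j)$. By the contraction identity recalled in the proof of Lemma~\ref{Clifford-exterior-derivative} (with $k=3$, so $(-1)^{k+1}=1$), one has $S_j=-2\,c(e_j\lrcorner\beta_3)$, hence
\[
Q(\beta)=c(\beta_3)^2+\tfrac14\sum_j S_j^2=c(\beta_3)^2+\sum_j c(e_j\lrcorner\beta_3)^2 .
\]
Second, arguing precisely as in Lemma~\ref{zeroth-order-5D}: $c(\beta_3)^2$ is self-adjoint, so its Clifford-homogeneous components of degree $2$ and $6$ (which act skew-adjointly) vanish, while its scalar part is $|\beta_3|^2$; thus $c(\beta_3)^2=|\beta_3|^2+c(\theta)$ for some $4$-form $\theta$. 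Crucially, I will never need to identify $\theta$.

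The core step is then to expand, using $c(e_j)^2=-1$,
\[
\sum_j S_j^2=\Bigl(\sum_j c(e_j)c(\beta_3)c(e_j)\Bigr)c(\beta_3)+\sum_j c(e_j)\,c(\beta_3)^2\,c(e_j)-6\,c(\beta_3)^2+c(\beta_3)\Bigl(\sum_j c(e_j)c(\beta_3)c(e_j)\Bigr),
\]
and to feed in the identity above with $n=6$. Since $\beta_3$ has pure degree $3$, $\sum_j c(e_j)c(\beta_3)c(e_j)=0$, which annihilates the first and last terms; splitting $c(\beta_3)^2=|\beta_3|^2+c(\theta)$ into degrees $0$ and $4$ gives $\sum_j c(e_j)\,c(\beta_3)^2\,c(e_j)=-6|\beta_3|^2+2\,c(\theta)$. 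Therefore
\[
\sum_j S_j^2=-6|\beta_3|^2+2c(\theta)-6\bigl(|\beta_3|^2+c(\theta)\bigr)=-12|\beta_3|^2-4c(\theta),
\]
and so
\[
Q(\beta)=c(\beta_3)^2+\tfrac14\sum_j S_j^2=\bigl(|\beta_3|^2+c(\theta)\bigr)+\bigl(-3|\beta_3|^2-c(\theta)\bigr)=-2|\beta_3|^2,
\]
as claimed.

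The one point requiring care is that $c(\beta_3)^2$ is not Clifford-homogeneous: one must separate its scalar and $4$-form parts before applying the $\sum_j c(e_j)(\cdot)c(e_j)$ identity. Happily the $4$-form contribution cancels at the end, so the precise form of $\theta$ plays no role. (Sanity check: for $\beta_3=e_{123}$ on $\R^6$ one has $c(\beta_3)^2=1$ and $\sum_j c(e_j\lrcorner\beta_3)^2=c(e_{23})^2+c(e_{13})^2+c(e_{12})^2=-3$, giving $Q(\beta)=-2=-2|\beta_3|^2$.)
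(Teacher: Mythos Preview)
Your proof is correct and follows essentially the same approach as the paper's own (suppressed) argument: the two identities the paper quotes,
\[
\sum_j c(e_j)\,c(\beta)^2\,c(e_j)=2c(\beta)^2-8|\beta|^2,\qquad \sum_j c(e_j)\,c(\beta)\,c(e_j)=0,
\]
are exactly the $p=0,4$ and $p=3$ cases of your general formula $\sum_j c(e_j)c(\omega)c(e_j)=(-1)^{p+1}(n-2p)c(\omega)$, once one uses your decomposition $c(\beta)^2=|\beta|^2+c(\theta)$. Your organisation, expanding $\sum_j S_j^2$ directly and applying the general identity degree by degree, is a bit more streamlined than the line-by-line expansion used in the 5-dimensional proof to which the paper refers, but the substance is the same.
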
 

\begin{proof}
The proof is very similar to that of Lemma~\ref{zeroth-order-5D}. 
First notice that $c(\beta) = c(\beta)^*$. Next we need the two identities:
\begin{align*}
\sum c(e_j) \circ c(\beta)^2 \circ c(e_j) &= 2 c(\beta)^2 - 8|\beta|^2,\\
\sum c(e_j) \circ c(\beta) \circ c(e_j) & = 0.
\end{align*}
From here the calculation is essentially identical to that above and so we suppress the details. 
\end{proof}

For the record we also give the formula for $Q(\beta)$ in 8-dimensions. Again, $Q(\beta)$ is non-positive. The proof is essentially the same as for 5-dimensions and so we skip the details.

\begin{lemma}\label{Q-8D}
On an 8-manifold, with $\beta = \beta_3$ purely of degree 3, we have that
\[
Q(\beta) = - 2 c(\beta)^2 - 4|\beta|^2
\]
\end{lemma}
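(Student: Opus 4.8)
The plan is to mirror the proof of Lemma~\ref{zeroth-order-5D}; it is in fact shorter, because in dimension eight there is no degree-five component to carry along. The first step is to pin down the form that actually perturbs the Dirac operator. In dimension $8 = 4m$ with $m = 2$, formula~\eqref{beta-4m} gives $\beta = \beta_3 + i * \beta_3$, and Lemma~\ref{Clifford-Hodge} (with $n = 8$ and $k = 3$, where the sign and powers of $i$ collapse to $+1$) gives $c(*\beta_3) = c(\beta_3)$ on $S_+$. Hence, viewed as operators $S_+ \to S_-$, $c(\beta) = (1 + i) c(\beta_3)$, while the adjoint operator $S_- \to S_+$ is $(1 - i) c(\beta_3)$; consequently $c(\beta)^* \circ c(\beta) = 2\, c(\beta_3)^2$ on $S_+$, since $(1 - i)(1 + i) = 2$. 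Here $c(\beta_3)$ interchanges the two chirality bundles, so $c(\beta_3)^2$ is a self-adjoint endomorphism preserving $S_\pm$, and it is this operator that the statement abbreviates as $c(\beta)^2$.

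The second step is to record the two Clifford-algebra identities that play the role of~\eqref{conjugate-beta3} and~\eqref{conjugate-beta3-squared}, namely, in dimension eight,
\[
\sum_j c(e_j) \circ c(\beta_3) \circ c(e_j) = 2\, c(\beta_3),
\qquad
\sum_j c(e_j) \circ c(\beta_3)^2 \circ c(e_j) = -8 |\beta_3|^2 .
\]
These follow from the same counting argument as in Lemma~\ref{zeroth-order-5D}. Expanding $\beta_3$ in a local orthonormal coframe and using the Clifford relations gives $c(\beta_3)^2 = |\beta_3|^2 + c(\theta)$ for some $4$-form $\theta$ (the degree-$2$ and degree-$6$ parts cannot occur because $c(\beta_3)^2$ is self-adjoint, whereas real $2$- and $6$-forms act skew-adjointly by the conventions fixed before Lemma~\ref{Clifford-Hodge}). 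Moreover, for a basis $k$-form $e_I$ one checks, by separating the indices $j$ inside and outside $I$, that $\sum_j c(e_j) \circ c(e_I) \circ c(e_j) = (-1)^k (2k - n) c(e_I)$; for $n = 8$ this equals $2\, c(e_I)$ when $k = 3$ and $0$ when $k = 4$, which, together with $\sum_j c(e_j)^2 = -8$, yields the two displayed formulas.

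The third step is the substitution: insert $c(\beta) = (1 + i) c(\beta_3)$ and $c(\beta)^* = (1 - i) c(\beta_3)$ into the expression~\eqref{Q} for $Q(\beta)$, expand the square $\bigl( c(e_j) \circ c(\beta) + c(\beta)^* \circ c(e_j) \bigr)^2$, and collapse each summand using the two identities above together with $c(e_j)^2 = -1$ and $(1 \pm i)^2 = \pm 2 i$. The terms proportional to $i$ must cancel because $Q(\beta)$ is self-adjoint; what survives is $2\, c(\beta_3)^2$ coming from $c(\beta)^* \circ c(\beta)$, a contribution $\tfrac14(-16\, c(\beta_3)^2) = -4\, c(\beta_3)^2$ from the summation, and a scalar contribution $\tfrac14(-16 |\beta_3|^2) = -4 |\beta_3|^2$, giving $Q(\beta) = -2\, c(\beta_3)^2 - 4 |\beta_3|^2$, which is the claim. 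I expect the only real nuisance to be keeping the signs, the powers of $i$, and the $S_\pm$-grading under control in this last expansion --- the grading enters because $c(\beta_3)$ swaps $S_+$ and $S_-$, in contrast with the irreducible odd-dimensional situation of Lemma~\ref{zeroth-order-5D} --- but this is bookkeeping, not a genuine difficulty.
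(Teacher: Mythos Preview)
Your proof is correct and follows exactly the approach the paper indicates (``essentially the same as for 5-dimensions''): you correctly identify that the complex form entering $Q$ is $\beta_3 + i*\beta_3$ with $c(*\beta_3)=c(\beta_3)$ on $S_+$, establish the two Clifford identities $\sum_j c(e_j)c(\beta_3)c(e_j)=2c(\beta_3)$ and $\sum_j c(e_j)c(\beta_3)^2 c(e_j)=-8|\beta_3|^2$ via the counting formula $\sum_j c(e_j)c(e_I)c(e_j)=(-1)^k(2k-n)c(e_I)$, and then expand~\eqref{Q} to obtain $2c(\beta_3)^2-4c(\beta_3)^2-4|\beta_3|^2$. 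Your remark that the degree-$2$ and degree-$6$ parts of $c(\beta_3)^2$ vanish by self-adjointness (so that only the scalar and degree-$4$ pieces survive, the latter being annihilated by the $k=4$, $n=8$ case of your counting formula) is the key new observation beyond the $5$-dimensional argument.
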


\section{A~priori estimates}\label{analysis}

In this section we prove some a priori estimates for solutions to the 8-dimensional Seiberg--Witten equations. (The same techniques apply to all dimensions, we focus on~8 just to be concrete.) 

\begin{remark}\label{harmonic-part}
Before we begin with the estimates, we make an important remark about how compactness can, and indeed does, fail. Suppose we have solutions $(\phi, A, \beta)$ and suppose, moreover, that there is a harmonic 3-form $\theta$ with the property that $c(\theta)(\phi)=0$. Then for any value of $t \in \R$, $(\phi, A, \beta + t\theta)$ is again a solution. This phenomenon genuinely arises in the examples found by the second author, which will appear in~\cite{Ghosh}. 

One way to escape this non-compactness phenomenon is prescribe the harmonic part of $\beta$. This is a finite dimensional constraint and so does not affect the ellipticity of the equations, only their index. 

An alternative to this is hinted at in Remark~\ref{gerbes}. Note first that something similar happens in the standard Seiberg--Witten story, when proving estimates for the connection $A$. There one first uses gauge transformations of the form $f= e^iu$ to put the connection $A$ in Coulomb gauge with respect to some reference, i.e. $A = A_0 + a$ where $\diff^*a=0$. The elliptic estimates for $\diff + \diff^*$ are enough to control $a$ modulo its harmonic part. To control the harmonic part of $a$, one needs to use so-called ``large'' gauge transformations $f \colon M \to S^1$ which are not homotopic to the identity. In order to replicate this kind of reasoning with $\beta$ however, we would need $\beta$ to be a connection form in a gerbe, on which we could act with gauge transformations. (In this version we would not need $\diff^*\beta$ to appear in the curvature equation; we would instead arrange for it to vanish by gauge transformations.) However for this to work, we would need to know how to define ``spinors with values in a gerbe'' and define the Dirac operator coupled to $\beta$.   
\end{remark}

\begin{remark}
The estimates we prove suggest that to produce compactness results, the crucial quantity to control will be the $L^8$ norm of either $\phi$ or $\beta$. This first appears in Theorem~\ref{Lp-compactness-result}, which shows that given a sequence of solutions with a uniform $L^{8+\epsilon}$ bound on either $\beta$ or on both $\phi$ and $\beta_h$ (the harmonic part of $\beta$) then there is a subsequence which converges modulo gauge. The importance of $L^8$ is also clear from Corollary~\ref{epsilon-regularity}, which is an $\epsilon$-regularity result for $\phi$ and $\beta$, where $\epsilon$ refers to the sum of their $L^8$-norms.
\end{remark}

We now move on to the details. Let $(M,g)$ be a compact 8-dimensional Riemannian manifold with a $\Spinc$ structure. Recall that the 8-dimensional Seiberg--Witten equations are for $\phi \in \Gamma(S_+)$, $A \in \A$ and $\beta \in \Omega^3$. They read
\begin{align}
\big( 
D_A + (1+i) c(\beta)
\big)(\phi)
	&=
		0,
		\label{8D-Dirac}\\
F_A + 2i \diff^*\beta + 2\diff^+ \beta
	&=
		q(\phi).
		\label{8D-curvature}
\end{align}
We will prove a priori estimates for solutions to~\eqref{8D-Dirac} and \eqref{8D-curvature}. We use throughout the convention that $C$ denotes a constant which can change from line to line, but remains at all times independent of $(\phi,A,\beta)$. We begin with the analogue of the standard $C^0$ estimate on $\phi$, which is the foundation of the compactness results for Seiberg--Witten equations in dimensions 3 and 4. In our situation, the same argument gives a bound on $\phi$ in terms of~$\beta$:

\begin{lemma}\label{C0-estimate-phi}
There is a constant $C$ such that if $(\phi, A, \beta)$ solve~\eqref{8D-Dirac} and~\eqref{8D-curvature} then 
\[
\| \phi\|_{C^0} \leq C \left( 1 + \| \beta \|_{C^0} \right)
\]
\end{lemma}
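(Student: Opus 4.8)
The plan is to mimic the classical $C^0$ bound on the spinor in dimensions $3$ and $4$, applying the maximum principle to the function $|\phi|^2$. Concretely, I would start from the Weitzenb\"ock identity for $D_{A,\beta}$ provided by Proposition~\ref{4m-2-dim-Weitzenbock-proposition} (in its $4m-2=8$ incarnation, so $m=\nobreak$ the relevant value; alternatively one uses Proposition~\ref{Weitzenbock-proposition} directly with $\beta=(1+i)\beta_3$ and the value of $Q(\beta)$ from Lemma~\ref{Q-8D}). Since $\phi$ solves the Dirac equation~\eqref{8D-Dirac}, we have $D_{A,\beta}^*D_{A,\beta}\phi=0$, and therefore
\[
\nabla_{A,\beta}^*\nabla_{A,\beta}\phi
	=
		-\frac{s}{4}\phi
		-\frac12 c\big(F_A+2i\diff^*\beta+2\diff^+\beta\big)\phi
		-Q(\beta)\phi.
\]
Now I would use the curvature equation~\eqref{8D-curvature} to replace $c(F_A+2i\diff^*\beta+2\diff^+\beta)$ by $c(q(\phi))$, which by construction acts as $E_\phi$ on $S_+$; the key algebraic point, familiar from $n=3,4$, is that $\langle E_\phi(\phi),\phi\rangle = |\phi|^4 - \frac1r|\phi|^4 \geq 0$ (here $r=\rank S_+$), so this term has a favourable sign when paired against $\phi$.

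Next I would compute $\tfrac12\Delta|\phi|^2 = \langle \nabla_{A,\beta}^*\nabla_{A,\beta}\phi,\phi\rangle - |\nabla_{A,\beta}\phi|^2$ (using the connection $\nabla_{A,\beta}$, which is unitary so this Bochner-type identity holds), and substitute the expression above. Dropping the nonnegative term $|\nabla_{A,\beta}\phi|^2$ and the nonnegative term coming from $\langle E_\phi\phi,\phi\rangle$, I obtain a differential inequality of the form
\[
\tfrac12\Delta|\phi|^2 \leq -\frac{s}{4}|\phi|^2 - \langle Q(\beta)\phi,\phi\rangle.
\]
By Lemma~\ref{Q-8D}, $Q(\beta) = -2c(\beta)^2 - 4|\beta|^2$; since $c(\beta)$ is self-adjoint, $-c(\beta)^2 = c(\beta)^*c(\beta)\geq 0$ in the operator sense (wait---it is $+c(\beta)^2$ that appears with a minus, so $-\langle Q(\beta)\phi,\phi\rangle = 2\langle c(\beta)^2\phi,\phi\rangle + 4|\beta|^2|\phi|^2 = 2|c(\beta)\phi|^2 + 4|\beta|^2|\phi|^2$, wait that has the wrong sign). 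The correct bookkeeping: $-\langle Q(\beta)\phi,\phi\rangle = 2\langle c(\beta)^2\phi,\phi\rangle + 4|\beta|^2|\phi|^2$, and $\langle c(\beta)^2\phi,\phi\rangle = -|c(\beta)\phi|^2 \leq 0$, while $|c(\beta)\phi| \leq C|\beta||\phi|$; hence $-\langle Q(\beta)\phi,\phi\rangle \leq 4|\beta|^2|\phi|^2$. Combined with $|s|\leq C$ on the compact manifold $M$, this yields
\[
\tfrac12\Delta|\phi|^2 \leq \big(C + 4\|\beta\|_{C^0}^2\big)|\phi|^2.
\]

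From here the maximum principle finishes the argument: at a point where $|\phi|^2$ attains its maximum, $\Delta|\phi|^2\geq 0$ (with the geometer's sign convention $\Delta = \nabla^*\nabla$), and one must be slightly careful because the right-hand side is not manifestly of the form $-|\phi|^2\cdot(\text{positive})$. The standard trick, as in the $3$- and $4$-dimensional case, is: either $\phi\equiv 0$, in which case the estimate is trivial, or at an interior maximum of $|\phi|^2$ the inequality $0 \leq (C+4\|\beta\|_{C^0}^2)|\phi|_{\max}^2$ is vacuous. So instead one argues quantitatively by testing against $|\phi|$ itself (where $|\phi|>0$), or by a Moser/Kato-inequality argument, to extract $\|\phi\|_{C^0}^2 \leq C(1+\|\beta\|_{C^0}^2)$, hence $\|\phi\|_{C^0}\leq C(1+\|\beta\|_{C^0})$. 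The cleanest route is: set $u=|\phi|$; away from its zero set $u$ satisfies $\Delta u \leq (C+4\|\beta\|_{C^0}^2)u$ (Kato's inequality absorbs the gradient term with the right sign), and then at a maximum of $u$ one still needs the extra input that the favourable quartic term $\langle E_\phi\phi,\phi\rangle = (1-\tfrac1r)|\phi|^4$ was dropped too early. I would \emph{keep} that term: it gives
\[
\tfrac12\Delta|\phi|^2 \leq \Big(C + 4\|\beta\|_{C^0}^2\Big)|\phi|^2 - \big(1-\tfrac1r\big)|\phi|^4,
\]
and now at an interior maximum point $p$ of $|\phi|^2$ we get $0 \leq (C+4\|\beta\|_{C^0}^2)|\phi(p)|^2 - (1-\tfrac1r)|\phi(p)|^4$, so $|\phi(p)|^2 \leq \tfrac{r}{r-1}(C+4\|\beta\|_{C^0}^2)$, whence $\|\phi\|_{C^0} = |\phi(p)| \leq C(1+\|\beta\|_{C^0})$ after adjusting constants. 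The main obstacle here is purely one of sign-bookkeeping in the Weitzenb\"ock remainder: one must verify that the $Q(\beta)$ contribution, after pairing with $\phi$, is bounded \emph{above} by a harmless multiple of $\|\beta\|_{C^0}^2|\phi|^2$ (which it is, since the genuinely dangerous $-c(\beta)^2$ piece is $+|c(\beta)\phi|^2\geq 0$ on the wrong side and must be discarded, not used), and that the quartic self-interaction term retains its good sign. Everything else is the standard maximum-principle argument for the $C^0$ bound on the spinor.
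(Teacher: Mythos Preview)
Your approach is the paper's: combine the Weitzenb\"ock formula with the Dirac and curvature equations, retain the favourable quartic term $-\tfrac12\langle E_\phi\phi,\phi\rangle = -\tfrac{7}{16}|\phi|^4$, bound $-\langle Q(\beta)\phi,\phi\rangle$ by $C|\beta|^2|\phi|^2$, and apply the maximum principle to $|\phi|^2$. The paper in fact avoids the explicit formula for $Q(\beta)$ from Lemma~\ref{Q-8D} altogether, simply observing that $Q(\beta)$ is algebraic and quadratic in $\beta$ so that $|Q(\beta)\psi|\le C|\beta|^2|\psi|$ automatically.

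One bookkeeping correction: for a real $3$-form $\beta_3$ the endomorphism $c(\beta_3)$ is \emph{self-adjoint} (this is precisely why $s_3=1$), so $\langle c(\beta_3)^2\phi,\phi\rangle = +|c(\beta_3)\phi|^2 \ge 0$, not $-|c(\beta_3)\phi|^2$ as you wrote. Hence $-\langle Q(\beta)\phi,\phi\rangle = 2|c(\beta_3)\phi|^2 + 4|\beta_3|^2|\phi|^2$, which is still bounded above by $C|\beta|^2|\phi|^2$, just with a larger constant than your $4$; the argument is unaffected. Also, dimension $8=4m$ with $m=2$, so the relevant Weitzenb\"ock formula is Proposition~\ref{4m-dim-Weitzenbock-proposition}, not the $(4m-2)$-dimensional one.
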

\begin{proof}
We begin by noting that for any unitary connection $\nabla$ in $S_+$, 
\begin{equation}
\frac{1}{2}\Delta |\phi|^2
=
\left\langle \nabla^*\nabla \phi, \phi \right\rangle - |\nabla \phi|^2
\label{Laplacian-nabla}
\end{equation}
(Here $\Delta = \diff^*\diff$ is the non-negative Laplacian on functions.) We will apply this with $\nabla = \nabla_{A,\beta}$.

Thanks to~\eqref{8D-curvature}, the Weitzenb\"ock formula for $D_{A,\beta} = D_A + (1+i) c(\beta)$ reads
\begin{equation}
D_{A,\beta}^*D_{A,\beta}
	=
		\nabla_{A,\beta}^* \nabla_{A,\beta}
		+ 
		\frac{s}{4}
		+ 
		\frac{1}{2} E_\phi
		+ 
		Q(\beta).
\label{another-Weitzenbock-formula}
\end{equation}
(Recall our notation~\eqref{Ephi} that $E_\phi$ denotes the trace-free part of projection along $\phi$.) Note that $Q(\beta)$ is made algebraically and quadratically from $\beta$; it is the last two terms in the statement of~Proposition~\ref{Weitzenbock-proposition}. This means that there is a constant $C$ such that for all $\psi$ and $\beta$, we have $|Q(\beta)(\psi)| \leq C |\beta|^2|\psi|$. 

We now apply~\eqref{another-Weitzenbock-formula} to $\phi$ and take the inner-product with $\phi$. Together with the Dirac eqaution~\eqref{8D-Dirac} and the fact that $\left\langle E_\phi(\phi),\phi \right\rangle= \frac{7}{8} |\phi|^2$, this gives
\begin{align}
\left\langle\nabla_{A,\beta}^*\nabla_{A,\beta} \phi, \phi \right\rangle
	&=
		-\frac{s}{4}|\phi|^2
		- \frac{1}{2} \left\langle E_\phi(\phi) , \phi \right\rangle
		- \left\langle Q(\beta)(\phi),\phi  \right\rangle,
		\nonumber\\
	& \leq
		C(1+ |\beta|^2)|\phi|^2 - \frac{7}{16}|\phi|^4.
		\label{rough-Laplacian-bound}
\end{align}
Combining with~\eqref{Laplacian-nabla} this gives
\begin{equation}
\Delta |\phi|^2 \leq C(1+|\beta|^2)|\phi|^2 - \frac{7}{8} |\phi|^4.
\label{Laplacian-phi-estimate}
\end{equation}
At a maximum of $|\phi|^2$, $0 \leq \Delta |\phi|^2$ (recall we are using the non-negative Laplacian) and so at that maximum point $p \in M$,
\[
\|\phi\|_{C^0}^2 
	= |\phi(p)|^2 \leq C(1 + |\beta(p)|^2) 
		\leq C\left(1+\| \beta\|^2_{C^0}\right)
\]
This is equivalent to the bound in the statement. 
\end{proof}

Having shown how to bound $\phi$ by $\beta$, we now consider the converse and bound $\beta$ by~$\phi$ (modulo the harmonic part of $\beta$, as per Remark~\ref{harmonic-part}). In the next lemma we use the following notation. Given a form $\beta \in \Omega^*$, we write $\beta_h$ for the harmonic part of $\beta$. Meanwhile we write $G \colon \Omega^* \to \Omega^*$ for the Green's operator of the Hodge Laplacian $\Delta = \diff^*\diff + \diff \diff^*$. So $\Delta G(\beta) = \beta - \beta_h = G(\Delta \beta)$. 

\begin{lemma}\label{beta-Green}
If $(\phi, A, \beta)$ solve~\eqref{8D-curvature} then $\beta - \beta_h$ is determined by $\phi$ via
\[
\beta - \beta_h= -\frac{i}{2} \diff\left(  G ( q(\phi)_2)\right) - * \diff\left(  G ( q(\phi)_4)\right)
\]
Here, $q(\phi) = q(\phi)_2 + q(\phi)_4$ where $q(\phi)_2\in i \Omega^2$ and $q(\phi)_4 \in \Omega^4_+$,.
\end{lemma}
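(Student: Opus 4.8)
The plan is to reconstruct $\beta - \beta_h$ from the Hodge decomposition and then substitute the curvature equation~\eqref{8D-curvature}. Since $\Delta G\beta = \beta - \beta_h$ and the Green's operator commutes with $\diff$, $\diff^*$ and $*$ (standard properties of $G$), we have
\[
\beta - \beta_h
	= \diff\diff^* G\beta + \diff^*\diff G\beta
	= \diff\big(G(\diff^*\beta)\big) + \diff^*\big(G(\diff\beta)\big),
\]
so it suffices to identify the two terms on the right. To do so, split~\eqref{8D-curvature} by degree: since $F_A, \diff^*\beta \in \Omega^2$ while $\diff^+\beta, q(\phi)_4 \in \Omega^4$, the curvature equation is equivalent to the pair
\[
F_A + 2i\,\diff^*\beta = q(\phi)_2,
	\qquad
	2\,\diff^+\beta = q(\phi)_4 .
\]

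For the first term, the degree-$2$ equation gives $\diff^*\beta = -\tfrac{i}{2}\big(q(\phi)_2 - F_A\big)$, so
\[
\diff\big(G(\diff^*\beta)\big)
	= -\tfrac{i}{2}\,\diff\big(G(q(\phi)_2)\big) + \tfrac{i}{2}\,\diff\big(G(F_A)\big),
\]
and the last term vanishes because $\diff\big(G(F_A)\big) = G(\diff F_A) = 0$, as $F_A$ is closed. This is precisely the first term of the claimed formula.

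The substantive step is the second term $\diff^*\big(G(\diff\beta)\big)$. Here the obstacle is that~\eqref{8D-curvature} constrains only the self-dual part $\diff^+\beta$, whereas $\diff\beta$ also has an anti-self-dual part, so one cannot simply replace $\diff\beta$ by $q(\phi)_4$. The way around this is to combine $\diff^2 = 0$ with the eigenspace splitting $\Omega^4 = \Omega^4_+ \oplus \Omega^4_-$ (on which $* = \pm 1$ and $*^2 = 1$). Writing $\diff\beta = \diff^+\beta + \diff^-\beta$ and using $\diff(\diff^-\beta) = -\diff(\diff^+\beta)$,
\[
\diff(*\diff\beta)
	= \diff(\diff^+\beta) - \diff(\diff^-\beta)
	= 2\,\diff(\diff^+\beta)
	= \diff\big(q(\phi)_4\big).
\]
Then, using $\diff^* = -*\diff*$ on $\Omega^4$ (valid in every degree in even dimension) and commuting $G$ and $*$ through $\diff$,
\[
\diff^*\big(G(\diff\beta)\big)
	= -*\,\diff\big(G(*\diff\beta)\big)
	= -*\,G\big(\diff(*\diff\beta)\big)
	= -*\,G\big(\diff q(\phi)_4\big)
	= -*\,\diff\big(G(q(\phi)_4)\big).
\]
Adding the two contributions gives the stated formula. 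Apart from the self-dual/anti-self-dual bookkeeping in the last two displays, everything is formal manipulation with $G$, $\diff$ and $*$; that bookkeeping is the only point requiring care.
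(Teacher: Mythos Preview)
Your proof is correct and follows essentially the same route as the paper's: both arguments amount to showing $\Delta\beta = -\tfrac{i}{2}\,\diff q(\phi)_2 - *\,\diff q(\phi)_4$ from the degree-split curvature equation (using $\diff F_A=0$ and the identity $\diff(*\diff\beta)=2\,\diff\diff^+\beta$), then inverting with $G$. The only difference is order of operations---you apply $G$ first via the Hodge decomposition and then substitute, while the paper applies $\diff$ to~\eqref{8D-curvature} first to compute $\Delta\beta$ and then applies $G$---but the content is identical.
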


\begin{proof}
Applying $\diff$ to~\eqref{8D-curvature}, splitting according to the degree of terms, and using the fact that $\diff F_A = 0$, we obtain two equations:
\begin{align*}
\diff \diff^* \beta & = - \frac{i}{2} \diff q(\phi)_2,\\
\diff \left( * \diff \beta \right) & =  \diff q(\phi)_4.
\end{align*}
Now using that $\diff^* = - * \diff~*$, these equations imply that 
\begin{equation}
\Delta \beta = - \frac{i}{2} \diff q(\phi)_2 - *\left( \diff q(\phi)_4\right)
\label{Delta-beta}
\end{equation}
The result now follows from applying $G$ and using the fact that it commutes with both $*$ and $\diff$.
\end{proof}

\begin{lemma}\label{Lp1-estimate-beta}
For any $p \in (0,\infty)$ there is a constant $C$ such that any solution $(\phi,A, \beta)$ of~\eqref{8D-curvature} we have
\[
\| \beta\|_{L^p_1} 
	\leq 
		C\left( 
		\| \phi \|_{L^{2p}}^2 + \| \beta_h\|
		\right)
\]
(where, at the expense of changing $C$ we can use any norm on harmonic 3-forms, since they are all equivalent). 
\end{lemma}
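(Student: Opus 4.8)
The plan is to read this off almost directly from Lemma~\ref{beta-Green}, so that the only real input beyond what is already proved is the standard $L^p$-regularity theory for the Hodge Laplacian on the compact manifold $M$. Recall that Lemma~\ref{beta-Green} expresses a solution of~\eqref{8D-curvature} as
\[
\beta - \beta_h = -\tfrac{i}{2}\,\diff\big(G(q(\phi)_2)\big) - *\,\diff\big(G(q(\phi)_4)\big),
\]
i.e.\ $\beta$ is, modulo its harmonic part, obtained from $q(\phi)$ by applying the fixed operators $\diff G$ and $*\diff G$. So the heavy lifting is already done, and what remains is a matter of quoting mapping properties of these operators together with the pointwise quadratic bound on $q(\phi)$.

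I would proceed in four short steps. First, from the definition~\eqref{Ephi} of $E_\phi$ and the fact that the Clifford isomorphism identifying $q(\phi)$ with $E_\phi$ is a bounded bundle map with bounded inverse, one gets the pointwise estimate $|q(\phi)| \le C|\phi|^2$; integrating, $\|q(\phi)\|_{L^p} \le C\|\phi\|_{L^{2p}}^2$ for every $p \in (0,\infty)$. Second, for $1<p<\infty$ one has the elliptic estimate $\|\diff G\,\omega\|_{L^p_1} \le C\|\omega\|_{L^p}$: indeed $G$ is a pseudodifferential operator of order $-2$, so $G\colon L^p\to L^p_2$ is bounded (equivalently, $\Delta G\omega = \omega-\omega_h$ together with interior elliptic regularity and boundedness on $L^p$ of the smoothing operator $\omega\mapsto\omega_h$), and $\diff\colon L^p_2\to L^p_1$ is bounded; since $*$ is a pointwise isometry the same holds for $*\diff G$. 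Third, combining these gives $\|\beta-\beta_h\|_{L^p_1} \le C\big(\|q(\phi)_2\|_{L^p}+\|q(\phi)_4\|_{L^p}\big) \le C\|\phi\|_{L^{2p}}^2$. Fourth, $\|\beta\|_{L^p_1} \le \|\beta-\beta_h\|_{L^p_1} + \|\beta_h\|_{L^p_1}$, and since harmonic $3$-forms form a finite-dimensional space on which all norms are equivalent, $\|\beta_h\|_{L^p_1} \le C\|\beta_h\|$ for whatever fixed norm we choose; this yields the stated inequality for $1<p<\infty$. Note that all constants are uniform in $(\phi,A,\beta)$ automatically, since $G$, $\diff$, $*$ and the harmonic projection are fixed operators depending only on $(M,g)$.

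I do not expect a serious obstacle: morally this is just the $4$-dimensional Seiberg--Witten estimate controlling the connection in Coulomb gauge, transplanted from $A$ to $\beta$ via the curvature equation. The one place requiring care is the low-exponent endpoint $0<p\le1$, where the order-zero operator $\nabla\diff G$ is a Calder\'on--Zygmund singular integral and hence fails to be bounded on $L^p$; there one has to replace the $L^p$ bound by its weak-type $(1,1)$ version and use that weak-$L^1$ embeds into $L^p$ on the finite-measure space $M$. Everything else is routine, and the substance of the lemma really lies in Lemma~\ref{beta-Green} rather than here.
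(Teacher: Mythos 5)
Your proof is correct and follows essentially the same route as the paper's: the paper likewise combines Lemma~\ref{beta-Green} with the boundedness of $G\colon L^p \to L^p_2$ and $\diff\colon L^p_2\to L^p_1$, the pointwise bound $|q(\phi)|\le C|\phi|^2$, and the finite-dimensionality of the harmonic part. Your extra remark on the range $0<p\le 1$ goes beyond what the paper addresses (its proof tacitly uses the Calder\'on--Zygmund range $1<p<\infty$, which is all that is needed later), though the proposed weak-$(1,1)$ remedy does not quite work as stated since for $p<1$ on a finite-measure space an $L^p$ input need not lie in $L^1$.
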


\begin{proof}
We have the following inequalities (where the constant $C$ may change from step to step, but remains always independent of $\beta$ and $\phi$):
\begin{gather*}
\| \diff \left( G(q(\phi)_2\right) \|_{L^p_1}
	\leq		C \| G (q(\phi)_2)\|_{L^p_2}	
	\leq 
		C \| q(\phi)\|_{L^p}
	\leq 
		C\| \phi \|_{L^2p}^2\\
\| * \diff \left( G(q(\phi)_4)\right) \|_{L^p_1}
	\leq
		C \| G(q(\phi)_4)\|_{L^p_2}
	\leq
		C \| q(\phi)\|_{L^p_2}
	\leq 
		C\| \phi \|_{L^2p}^2
\end{gather*}
The result now follows from Lemma~\ref{beta-Green}.
\end{proof}

We next prove that an $L^p$ bound on $\beta$, for $p>8$ gives a $C^0$-bound on $\phi$. 

\begin{proposition}\label{Lp-beta-to-C0-phi}
Let $p>8$. There is a constant $C$ such that for any solution $(\phi,A,\beta)$ to equations~\eqref{8D-Dirac} and~\eqref{8D-curvature},
\[
\| \phi\|_{C^0} \leq C \| \beta\|_{L^p}
\]
\end{proposition}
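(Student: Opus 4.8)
The plan is to bootstrap from the $C^0$ estimate of Lemma~\ref{C0-estimate-phi}, which bounds $\|\phi\|_{C^0}$ by $1+\|\beta\|_{C^0}$, together with the elliptic estimate of Lemma~\ref{Lp1-estimate-beta}, which bounds $\|\beta\|_{L^p_1}$ by $\|\phi\|^2_{L^{2p}}+\|\beta_h\|$, and to run a Moser-type iteration that improves integrability of $\beta$ at each step until we land in a Sobolev space that embeds in $C^0$. The key input is the Sobolev embedding in dimension $8$: $L^p_1 \hookrightarrow L^{p^*}$ with $p^* = 8p/(8-p)$ for $p<8$, and $L^p_1 \hookrightarrow C^0$ once $p>8$. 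So if we start from some $L^{q_0}$ bound on $\beta$, Lemma~\ref{Lp1-estimate-beta} (applied with exponent $q_0/2$) gives $\|\beta\|_{L^{q_0/2}_1} \leq C(\|\phi\|^2_{L^{q_0}}+\|\beta_h\|)$, and then combining the $C^0$ bound on $\phi$ with interpolation feeds back an improved integrability for $\beta$. The first task is therefore to establish a starting point: since $M$ is compact and $\phi, \beta$ are smooth, $\beta \in L^r$ for every $r$, but the point is to get estimates \emph{uniform} in the solution, so we must be careful to only use the hypothesis $\|\beta\|_{L^p} \leq$ (fixed bound) --- wait, actually the statement is an inequality $\|\phi\|_{C^0}\le C\|\beta\|_{L^p}$ for a fixed universal $C$, so the bootstrap must be quantitative throughout.

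Concretely, I would argue as follows. Fix $p>8$ and suppose $(\phi,A,\beta)$ solves the equations. First, the harmonic part: since $\beta_h$ is the $L^2$-orthogonal projection of $\beta$ onto harmonic forms and all norms on the finite-dimensional space of harmonic $3$-forms are equivalent, $\|\beta_h\| \leq C\|\beta\|_{L^1} \leq C\|\beta\|_{L^p}$. Next, observe that $q(\phi)$ is quadratic in $\phi$, so $\|q(\phi)\|_{L^r} \leq C\|\phi\|^2_{L^{2r}}$, and rather than bounding $\|\phi\|_{C^0}$ directly I would first get an $L^q_1$ bound on $\beta$ for some $q$ with $8 < q \le p$ (say $q$ slightly above $8$, or just $q=p$ if we can afford it) using Lemma~\ref{Lp1-estimate-beta} with exponent $p/2 < p$: this gives $\|\beta\|_{L^{p/2}_1} \leq C(\|\phi\|^2_{L^p}+\|\beta\|_{L^p})$. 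Now I must convert the $\|\phi\|_{L^p}$ on the right back into something controlled by $\|\beta\|_{L^p}$. Here I would reprove a quantitative version of Lemma~\ref{C0-estimate-phi}'s maximum-principle argument but tracking the nonlinear term more carefully: from $\Delta|\phi|^2 \leq C(1+|\beta|^2)|\phi|^2 - \tfrac{7}{8}|\phi|^4$, at a maximum point $p_0$ one gets $|\phi(p_0)|^2 \leq C(1+|\beta(p_0)|^2)$, hence pointwise $\tfrac{7}{8}|\phi|^4 \leq C(1+|\beta|^2)|\phi|^2$ at the max, giving $\|\phi\|^2_{C^0} \leq C(1+\|\beta\|^2_{C^0})$. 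But $C^0$ of $\beta$ is too strong; instead I would integrate: multiplying $\Delta|\phi|^2 + \tfrac{7}{8}|\phi|^4 \leq C(1+|\beta|^2)|\phi|^2$ against $|\phi|^{2(s-1)}$ and integrating by parts yields an $L^{2s}$-type estimate $\|\phi\|^2_{L^{2s}} \leq C(1 + \||\beta|^2\|_{L^{s}})\cdot(\text{lower order})$, i.e.\ essentially $\|\phi\|_{L^r}^2 \leq C(1+\|\beta\|_{L^r}^2)$ for all $r<\infty$, with $C=C(r)$. Applying this with $r=p$ closes the loop: $\|\beta\|_{L^{p/2}_1} \leq C(1 + \|\beta\|^2_{L^p})$, and by Sobolev $L^{p/2}_1 \hookrightarrow C^0$ (since $p/2 > 4 = 8/2$, wait we need $p/2 > 8$, i.e.\ $p>16$) --- so for $p$ only slightly bigger than $8$ one iteration is not enough and we genuinely need the Moser iteration, raising the exponent through $L^{q}_1 \hookrightarrow L^{8q/(8-q)}$ finitely many times.

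So the clean structure is: (i) prove the auxiliary quantitative estimate $\|\phi\|_{L^r} \leq C_r(1+\|\beta\|_{L^r})$ for all finite $r$ via the integrated maximum-principle / Moser argument on $\Delta|\phi|^2$; (ii) prove the self-improving estimate: an $L^q$ bound on $\beta$ with $q<8$ upgrades, via Lemma~\ref{Lp1-estimate-beta} (exponent $q/2$), step (i), and Sobolev embedding $L^{q/2}_1\hookrightarrow L^{4q/(8-q/2)}$, to an $L^{q'}$ bound on $\beta$ with $q' = 4q/(8 - q/2) > q$ whenever... --- one checks $q' > q$ iff $q < 8$, and the map $q \mapsto q'$ has no fixed point below $8$ and drives $q\to 8$; (iii) since we start with $q=p>8$ already, in fact we are above the threshold from the outset, so one application of Lemma~\ref{Lp1-estimate-beta} with exponent $p/2$ gives $\beta \in L^{p/2}_1$; if $p>16$ this embeds in $C^0$ and with step (i) we are done immediately; if $8<p\le 16$, iterate: $L^{p/2}_1 \hookrightarrow L^{2p/(8-p/2)}$, a strictly larger exponent, and repeat Lemma~\ref{Lp1-estimate-beta} to get $\beta \in L^{(\text{new})/2}_1$, etc., after finitely many steps reaching an exponent above $16$ and hence $C^0$ control of $\beta$, whence $C^0$ control of $\phi$; finally absorb all the additive constants (using that $\|\beta\|_{L^p}$ also controls lower $L^r$ norms and, a priori for a genuine solution, is itself finite, so the "$1+$" terms can be absorbed into $C\|\beta\|_{L^p}$ after noting the estimate is only non-trivial when $\|\beta\|_{L^p}$ is bounded below --- or simply state the estimate as $\|\phi\|_{C^0}\le C(1+\|\beta\|_{L^p})$, which is the honest content, and the "$1$" is harmless). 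The main obstacle is bookkeeping the Moser iteration so that the constant $C$ stays uniform (it depends only on the geometry of $(M,g)$, the $\Spinc$-structure, and $p$, not on the solution), and in particular handling the nonlinear curvature term $q(\phi)$, quadratic in $\phi$, correctly through each stage of the iteration; the maximum-principle estimate of step (i) is what makes the quadratic nonlinearity tractable, exactly as in the classical $4$-dimensional theory.
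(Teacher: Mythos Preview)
Your approach is essentially correct but takes a genuinely different route from the paper. The paper never touches Lemma~\ref{Lp1-estimate-beta} here and never tries to control $\beta$ in $C^0$; instead it runs a direct Moser iteration on the single scalar inequality $\Delta f \leq gf - \tfrac{7}{8}f^2$ with $f=|\phi|^2$ and $g=C(1+|\beta|^2)$. The quartic term is used once to produce an \emph{a priori} starting bound $\int f^3 \leq C\int |g|^3$ (by multiplying by $f$ and integrating), and then the standard Moser scheme (multiply by $f^{q+1}$, integrate by parts, apply $L^2_1\hookrightarrow L^{8/3}$ and H\"older with $\|g\|_{L^m}$, $m>4$) bootstraps this to $\|f\|_{C^0}\leq C\||g|+4\|_{L^m}$, which is the claim with $p=2m>8$.

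Your argument instead extracts from the same differential inequality only the integrated estimate $\|\phi\|_{L^r}\leq C_r(1+\|\beta\|_{L^r})$ (your step~(i)), and then bounces between $\phi$ and $\beta$ via Lemma~\ref{Lp1-estimate-beta} and Sobolev embedding until $\beta$ lands in $C^0$, finally invoking Lemma~\ref{C0-estimate-phi}. This works, but it is exactly the mechanism the paper uses \emph{after} this proposition, in Proposition~\ref{L8-plus-epsilon-wins}, where the present result is the input rather than the output. So you have effectively inverted the paper's logical order: the paper proves $\|\phi\|_{C^0}\leq C\|\beta\|_{L^p}$ cleanly and then deduces $\|\beta\|_{C^0}$-control, whereas you obtain both at once through the coupled iteration. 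The paper's route is shorter and more self-contained (no need for Lemma~\ref{Lp1-estimate-beta} or control of $\beta_h$), while yours has the mild advantage of delivering the $C^0$ bound on $\beta$ simultaneously. Your closing remark about the additive constant is apt; the honest estimate in either approach is $\|\phi\|_{C^0}\leq C(1+\|\beta\|_{L^p})$.
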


\begin{proof}
Recall~\eqref{Laplacian-phi-estimate}) which says that
\[
\Delta |\phi|^2 \leq C\left(|\beta|^2 +1 \right) |\phi|^2 - \frac{7}{8}|\phi|^4
\]
To ease notation, put $f = |\phi|^2$ and $g= C(|\beta|^2 + 1)$ so that
\[
\Delta f \leq gf - \frac{7}{8} f^2
\]
with $f \geq 0$. We now apply Moser iteration to this inequality. This is essentially standard, so we just give an outline. 

Multiply through by $f^{q+1}$ and integrate by parts to obtain
\[
\int f^{q+1} \Delta f
	=
	\frac{4(q+1)}{(q+2)^2} \int \left| \nabla \left( f^{\frac{q+2}{2}}\right)\right|^2
	\leq
	\int |g| f^{q+2} - \frac{7}{8} \int f^{q+3}
\]
It follows that
\begin{equation}
\frac{4(q+1)}{(q+2)^2} 
\left\| f^{\frac{q+2}{2}}\right\|_{L^2_1}^2
	\leq 
	\int \left( |g| + 4 \right)f^{q+2} - \frac{7}{8} \int f^{q+3}
\label{pre-moser}
\end{equation}

In the first instance, we simply drop the negative term on the right-hand side of~\eqref{pre-moser}. Using the 8-dimensional Sobolev embedding $L^2_1 \hookrightarrow L^{8/3}$ we have
\begin{equation}
\frac{4(q+1)}{(q+2)^2} \left\| f\right\|^{q+2}_{L^{4(q+2)}/3}
	\leq
		C \int \left( |g| + 4 \right)f^{q+2}
\label{iterate-me}
\end{equation}
Now we apply H\"older's inequality to the right-hand side with $m,n>1$ chosen so that $1/m+1/n = 1$ and we also choose $m>4$. This gives
\[
\frac{4(q+1)}{(q+2)^2} \left\| f\right\|^{p+2}_{L^{4(q+2)}/3}
	\leq 
	C \left\| |g|+ 4 \right\|_{L^m} \| f \|_{L^{n(q+2)}}^{q+2}
\]
Since $m>4$, $n<4/3$ and so we have bounded a higher $L^k$-norm of $f$ by a lower one. We can now iterate this, starting from $q=0$, to obtain a $C^0$ bound of $f$ in terms of a $L^{2n}$ bound of $f$ and an $L^m$ bound on $|g|+4$. One must pay attention to the dependence on $q$ of the constants in~\eqref{iterate-me}, but this is standard for Moser iteration arguments. The upshot is that there is a constant $C$ such that
\[
\| f\|_{C^0} \leq C \| |g|+4\|_{L^m} \| f\|_{L^{2n}}
\] 

We now bound $f$ directly in $L^{2n}$. To do this we integrate $f\Delta f \leq gf^2 - \frac{7}{8}f^3$ to obtain
\[
\frac{7}{8}\int f^3 \leq \int gf^2 \leq  \frac{1}{3}\int |g|^3 + \frac{2}{3} \int f^3
\]
(where we have used H\"older's inequality). So $\int f^{3} \leq \frac{8}{5} \int |g|^3$. Since $2n< 3$ and $m>4$, we see that $\| f\|_{L^{2n}} \leq C \| g \|_{L^m}$. Putting the pieces together, we see that 
\[
\| f\|_{C^0} \leq C \| |g| + 4 \|_{L^m}
\]
for any choice of $m>4$. Since $g = |\beta|^2$ this gives the estimate in the statement of the Lemma.
\end{proof}

We can combine all of these estimates to show that an $L^p$ bound on both $\phi$ and $\beta_h$ or simply on $\beta$ alone is enough to deduce a $C^0$ bound on $\beta$ (and hence $\phi$ by Lemma~\ref{C0-estimate-phi}).

\begin{proposition}\label{L8-plus-epsilon-wins}
Let $p>8$. There exists a constant $C$ such that for any solution $(\phi,A, \beta)$ to equations~\eqref{8D-Dirac} and~\eqref{8D-curvature},
\begin{align}
 \| \beta \|_{C^0} & \leq C \| \beta \|^2_{L^p} \label{Lp-beta-bounds}\\
 \| \beta \|_{C^0} &\leq C \left( \| \phi \|^4_{L^p} + \| \beta_h \|^2\right)\label{Lp-phi-bounds}
\end{align}
\end{proposition}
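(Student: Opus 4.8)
The plan is to obtain both inequalities purely by chaining the estimates already proved in this section, together with the Sobolev embedding $L^p_1 \hookrightarrow C^0$, which on the compact $8$-manifold $M$ holds precisely for $p>8$. Throughout, the harmonic part will be handled by noting that the $L^2$-orthogonal projection $\Pi_h$ onto harmonic forms is a contraction, and more generally is bounded on every $L^q$ (it has finite rank and smooth kernel); since $\beta_h=\Pi_h\beta$, any norm of $\beta_h$ is controlled by the corresponding norm of $\beta$, and all norms on the finite-dimensional space of harmonic $3$-forms are equivalent, as already used in the statement of Lemma~\ref{Lp1-estimate-beta}.

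For~\eqref{Lp-beta-bounds} I would start from a bound on $\|\beta\|_{L^p}$ with $p>8$. Proposition~\ref{Lp-beta-to-C0-phi} then gives $\|\phi\|_{C^0}\le C\|\beta\|_{L^p}$, hence $\|\phi\|_{L^{2p}}\le C\|\beta\|_{L^p}$ since $M$ is compact; simultaneously $\|\beta_h\|_{L^2}\le\|\beta\|_{L^2}\le C\|\beta\|_{L^p}$. Feeding these into Lemma~\ref{Lp1-estimate-beta} at exponent $p$ bounds $\|\beta\|_{L^p_1}$, and the Sobolev embedding $L^p_1\hookrightarrow C^0$ yields $\|\beta\|_{C^0}\le C(\|\phi\|_{L^{2p}}^2+\|\beta_h\|)\le C\|\beta\|_{L^p}^2$ once the lower-order term linear in $\|\beta\|_{L^p}$ is absorbed (at worst into an additive constant, which is harmless for the intended compactness application; the displayed form records the leading behaviour).

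For~\eqref{Lp-phi-bounds} only $\|\phi\|_{L^p}$ and $\|\beta_h\|$ are given, so Lemma~\ref{Lp1-estimate-beta} cannot be applied directly at exponent $p$ — that would require control of $\phi$ in $L^{2p}$ — and a bootstrap is needed. First I would apply Lemma~\ref{Lp1-estimate-beta} at exponent $p/2$, giving $\|\beta\|_{L^{p/2}_1}\le C(\|\phi\|_{L^p}^2+\|\beta_h\|)$. The $8$-dimensional Sobolev embedding gives $L^{p/2}_1\hookrightarrow L^q$ with $\tfrac1q=\tfrac2p-\tfrac18$, and since $p>8$ one checks $q>8$ (with the convention $q=\infty$ when $p\ge16$). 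Hence $\|\beta\|_{L^q}\le C(\|\phi\|_{L^p}^2+\|\beta_h\|)$ with $q>8$, and Proposition~\ref{Lp-beta-to-C0-phi} at exponent $q$ produces $\|\phi\|_{C^0}\le C(\|\phi\|_{L^p}^2+\|\beta_h\|)$, hence the same control on $\|\phi\|_{L^{2p}}$. A second application of Lemma~\ref{Lp1-estimate-beta}, now at exponent $p$, followed by $L^p_1\hookrightarrow C^0$, then gives $\|\beta\|_{C^0}\le C(\|\phi\|_{L^{2p}}^2+\|\beta_h\|)\le C(\|\phi\|_{L^p}^4+\|\beta_h\|^2)$, again after absorbing lower-order terms.

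The content is entirely assembly, so there is no deep obstacle; the only genuinely delicate point is the bootstrap in the range $8<p\le16$, where one application of Sobolev to $\beta\in L^{p/2}_1$ does not reach $C^0$ and one must first upgrade the integrability of $\beta$ and route it back through Proposition~\ref{Lp-beta-to-C0-phi} to gain $C^0$ control of $\phi$ before closing. I would also want to check explicitly that this loop is not circular: each step transfers control from $\beta$ to $\phi$ or vice versa while strictly improving the relevant Lebesgue exponent, and the whole chain terminates at the single hypothesis of the proposition, so no hidden a~priori assumption on $(\phi,A,\beta)$ enters.
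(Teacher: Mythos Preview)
Your proposal is correct and follows essentially the same route as the paper's proof: Proposition~\ref{Lp-beta-to-C0-phi} to pass from $\|\beta\|_{L^p}$ to $\|\phi\|_{C^0}$ (hence $\|\phi\|_{L^{2p}}$), then Lemma~\ref{Lp1-estimate-beta} plus $L^p_1\hookrightarrow C^0$ for~\eqref{Lp-beta-bounds}; and for~\eqref{Lp-phi-bounds} the same bootstrap via Lemma~\ref{Lp1-estimate-beta} at exponent $p/2$, Sobolev to $L^q$ with $q>8$, Proposition~\ref{Lp-beta-to-C0-phi}, and a second pass through Lemma~\ref{Lp1-estimate-beta} at exponent $p$. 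You are in fact slightly more careful than the paper in tracking the harmonic contribution $\|\beta_h\|\le C\|\beta\|_{L^p}$ in the first estimate and in flagging the linear lower-order terms, both of which the paper's proof silently absorbs.
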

\begin{proof}
We start with the proof of~\eqref{Lp-beta-bounds}. Proposition~\ref{Lp-beta-to-C0-phi} already gives $\| \phi\|_{C^0} \leq C \| \beta\|_{L^p}$ which in turn implies the weaker estimate
\[
\| \phi\|_{L^{2p}} \leq C \| \beta\|_{L^p}
\]
Now Lemma~\ref{Lp1-estimate-beta} together with Sobolev embedding $L^p_1 \hookrightarrow C^0$ implies~\eqref{Lp-beta-bounds}. 

For~\eqref{Lp-phi-bounds} we combine Lemma~\ref{Lp1-estimate-beta} (with $p$ replaced by $p/2$) with the Sobolev embedding $L^{p/2}_1 \hookrightarrow L^q$ where $\frac{8}{q} = \frac{16}{p} -1$. For this $q$ we have
\[
\| \beta \|_{L^q} \leq C \left( \| \phi \|^2_{L^p} + \| \beta_h \|\right)
\]
Since $p>8$, we also have that $q >8$. Now Proposition~\ref{Lp-beta-to-C0-phi} gives
\[
\| \phi \|_{C^0} \leq C  \left( \| \phi \|^2_{L^p} + \| \beta_h \|\right)
\]
This implies the weaker bound
\[
\| \phi \|_{L^{2p}} \leq C  \left( \| \phi \|^2_{L^p} + \| \beta_h \|\right)
\]
which we can now put back in Lemma~\ref{Lp1-estimate-beta} to conclude that 
\[
\| \beta\|_{L^p_1} \leq C \left( \| \phi \|^4_{L^p} + \| \beta_h \|^2\right)
\]
From here~\eqref{Lp-phi-bounds} follows from the Sobolev embedding $L^p_1 \hookrightarrow C^0$.
\end{proof}

We can now give our main compactness result.

\begin{theorem}\label{Lp-compactness-result}
Let $(\phi_n,A_,\beta_n)$ be a sequence of solutions to equations~\eqref{8D-Dirac} and~\eqref{8D-curvature}. Suppose one of the two following conditions is satisfied. 
\begin{enumerate}
\item
There is $p>8$ and a constant $C$ such that for all $n$, 
\[
\| \beta_n \|_{L^p} \leq C
\]
\item
There is $p>8$ and a constant $C$ such that for all $n$, 
\[
\| \phi_n\|_{L^p} \leq C
\]
\emph{and} $\| (\beta_n)_h \| \leq C$ (where $\beta_h$ denotes the harmonic part of $\beta$ and we use any fixed choice of norm on the finite dimensional space of harmonic 3-forms).
\end{enumerate}
Then a subsequence of $(\phi_n,A_n,\beta_n)$ converges modulo gauge in $C^{\infty}$.
\end{theorem}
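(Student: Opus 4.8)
The plan is the standard elliptic bootstrap used for Seiberg--Witten-type systems; the one substantive ingredient, the a~priori estimate of Proposition~\ref{L8-plus-epsilon-wins}, is already in hand.

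\emph{Step 1: reduce to a $C^0$ bound.} First I would convert each hypothesis into a uniform $C^0$ bound on both $\phi_n$ and $\beta_n$. Under hypothesis~(1), the bound $\|\beta_n\|_{L^p}\le C$ with $p>8$ feeds into~\eqref{Lp-beta-bounds} to give $\|\beta_n\|_{C^0}\le C$, and then Lemma~\ref{C0-estimate-phi} bounds $\|\phi_n\|_{C^0}$. Under hypothesis~(2), the bounds on $\|\phi_n\|_{L^p}$ and on $\|(\beta_n)_h\|$ feed into~\eqref{Lp-phi-bounds} to give the same $C^0$ bound on $\beta_n$, and Lemma~\ref{C0-estimate-phi} again controls $\phi_n$. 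Note that in case~(1) the $L^p$ bound on $\beta_n$ also bounds its harmonic part, so in \emph{both} cases $\|(\beta_n)_h\|\le C$; combining this with the $C^0$ bound on $\phi_n$ and Lemma~\ref{Lp1-estimate-beta} then yields $\|\beta_n\|_{L^q_1}\le C_q$ for every finite $q$.

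\emph{Step 2: gauge fixing.} Next I would fix once and for all a smooth reference connection $A_0\in\A$. Since the gauge group is abelian, for each $n$ there is $u_n\in\G$ with $u_n\cdot A_n=A_0+a_n$ and $\diff^*a_n=0$ (solve $\Delta\xi_n=\diff^*(A_n-A_0)$ on functions and absorb $\diff\xi_n$), and using ``large'' gauge transformations representing a lattice in $H^1(M;\R)$ one can further force $(a_n)_h$ into a fixed bounded fundamental domain, so $\|(a_n)_h\|\le C$. The degree-$2$ part of~\eqref{8D-curvature} reads $F_{A_0}+\diff a_n+2i\diff^*\beta_n=q(\phi_n)_2$, so $\diff a_n$ is bounded in $L^q$ for every finite $q$ (using the $C^0$ bound on $\phi_n$ and the $L^q_1$ bound on $\beta_n$); together with $\diff^*a_n=0$ and the harmonic bound, the elliptic estimate for $\diff+\diff^*$ gives $\|a_n\|_{L^q_1}\le C_q$, hence $\|a_n\|_{C^0}\le C$.

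\emph{Step 3: bootstrap and conclude.} Relabel the gauge-fixed tuple $(u_n\cdot\phi_n,A_0+a_n,\beta_n)$ as $(\phi_n,A_n,\beta_n)$. Since $D_{A_n}-D_{A_0}$ is Clifford multiplication by a constant multiple of $a_n$, the Dirac equation~\eqref{8D-Dirac} becomes $D_{A_0}\phi_n=-(\text{const})\,c(a_n)\phi_n-(1+i)c(\beta_n)\phi_n$, whose right-hand side is bounded in every $L^q$ (all factors are $C^0$-bounded), so $\|\phi_n\|_{L^q_1}\le C_q$. Now fix $q>8$ so that $L^q_1$ is a Banach algebra. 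Then $q(\phi_n)$ is bounded in $L^q_1$, so Lemma~\ref{beta-Green} together with the harmonic bound gives $\beta_n$ bounded in $L^q_2$; feeding this into the curvature equation bounds $\diff a_n$ in $L^q_1$, hence $a_n$ in $L^q_2$; feeding everything back into the Dirac equation bounds $\phi_n$ in $L^q_2$. Iterating, $\|\phi_n\|_{L^q_k}+\|a_n\|_{L^q_k}+\|\beta_n\|_{L^q_k}\le C_{q,k}$ for all $k$. Rellich--Kondrachov compactness of $L^q_{k+1}\hookrightarrow L^q_k$ and a diagonal argument then produce a subsequence converging in every $L^q_k$, hence in $C^\infty$ (the finite-dimensional harmonic parts converge by Bolzano--Weierstrass), which is the assertion. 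I expect the only genuinely delicate point to be Step~2, specifically controlling the harmonic part of $a_n$ via large gauge transformations: there is no analogous device for the harmonic part of $\beta_n$, which is precisely why it must be assumed bounded in hypothesis~(2) (cf.\ Remark~\ref{harmonic-part}). Everything else is the familiar bootstrap, made to close because we work in $L^q_k$ with $q>8=\dim M$.
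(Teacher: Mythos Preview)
Your proposal is correct and follows essentially the same route as the paper: reduce the hypotheses to $C^0$ control on $\phi_n$ and $\beta_n$ via Proposition~\ref{L8-plus-epsilon-wins} and Lemma~\ref{C0-estimate-phi}, put $A_n$ in Coulomb gauge relative to a fixed $A_0$ with the harmonic part of $a_n$ tamed by large gauge transformations, and then bootstrap using the Dirac equation for $\phi_n$, Lemma~\ref{beta-Green} for $\beta_n$, and the elliptic estimate for $\diff+\diff^*$ on $a_n$. Your handling of the degree-2 component of~\eqref{8D-curvature} (keeping the $2i\diff^*\beta_n$ term and using the $L^q_1$ bound on $\beta_n$ to control it) is in fact slightly more careful than the paper's write-up, which tacitly drops that term when bounding $F_{A_n}$; the argument closes either way.
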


\begin{proof}
We closely follow the proof of compactness for the 4-dimensional Seiberg--Witten equations. Since this is standard, we don't give all the details. Moreover, we only give the proof assuming $\| \beta_n \|_{L^p} \leq C$ since the other part is almost identical. 

First, observe that the harmonic part of $\beta_n$ is uniformly bounded. This is because $\beta_n$ is bounded in $L^2$ and $(\beta_n)_h$ is the $L^2$-orthongonal projection of $\beta_n$ to $\ker \Delta$. So $\| (\beta_n)_h\|_{L^2}\leq C$ and hence $\| (\beta_n)_h \| \leq C$ for any choice of norm (since $\ker \Delta$ is finite dimensional). 

We will now show that $F_{A_n}$ is bounded in $C^0$. By Proposition~\ref{L8-plus-epsilon-wins}, we have $\| \beta_n\|_{C^0} \leq C$ for all $n$. Now by Lemma~\ref{C0-estimate-phi} we have $\| \phi_n \|_{C^0} \leq C$ for all $n$. Taking the $i\Omega^2$-component of~\eqref{8D-curvature} we have $F_{A_n} = q(\phi)_2$ and so $\| F_{A_n} \|_{C^0} \leq C$. 

Next write $A_n = A_0 + a_n$ for some reference connection $A_0$ and imaginary 1-form $a_n$. We explain how to ensure that $a_n$ is bounded in $L^p_1$. We act by gauge transformations so that $d^*a_n = 0$ and, moreover, so that the harmonic part of $a_n$ is uniformly bounded. (This is identical to the familiar 4-dimensional compactness result.) Now the fact that $\diff a_n = F_{A_n} - F_{A_0}$ means $\| \diff a_n + \diff^*a_n \|_{L^p} \leq C$. The $L^p_1$ elliptic estimate for $\diff +\diff^*$, together with the uniform bound on the harmonic part of $a_n$ now implies $\|a_n\|_{L^p_1} \leq C$ for any $p$ (remembering that $C$ depends on the choice of $p$). 

We next show that $\phi_n$ is uniformly bounded in $L^p_1$. We use the Dirac equation which can be written
\begin{equation}
D_{A_0}(\phi_n) = - c(a_n) \phi_n - c(\beta_n) \phi_n
\label{Dirac-for-bootstrap}
\end{equation}
The right-hand side is uniformly bounded in $L^p$. Moreover, $\phi_n$ is uniformly bounded in $L^2$ and so its $D_{A_0}$ harmonic part is also uniformly bounded. (The $D_{A_0}$ harmonic part of $\phi_n$ is the $L^2$-orthogonal projection to $\ker D_{A_0}$.) The $L^p_1$ elliptic estimate for $D_{A_0}$ now gives that $\|\phi_n\|_{L^p_1} \leq C$ for any choice of $p$ (where $C$ depends on $p$).

We can immediately bootstrap this to a uniform $L^p_2$ bound on $\phi_n$. By Lemma~\ref{Lp1-estimate-beta} the uniform $C^0$ bound on $\phi_n$ and the uniform bound on $(\beta_n)_h$ give a uniform $L^p_1$ bound on $\beta_n$.  Taking $p$ large enough so that Sobolev multiplication holds, the right-hand side of~\eqref{Dirac-for-bootstrap} is uniformly bounded in $L^p_1$ (since this is true for $a_n$, $\beta_n$ and $\phi_n$). Now the $L^p_2$ elliptic estimate for $D_{A_0}$, and the fact that the $D_{A_0}$-harmonic part of $\phi_n$ is bounded, imply that $\| \phi_n \|_{L^p_2} \leq C$.  

The next step is to show $\beta_n$ is uniformly bounded in $L^p_2$. Again with $p$ high enough that Sobolev multiplication holds, we have $\|q(\phi_n)\|_{L^p_1} \leq C$. By Lemma~\ref{beta-Green},
\[
\beta_n= (\beta_n)_h - \frac{i}{2} \diff (G(q(\phi_n)_2)) - * \diff (G (q(\phi_n)_4))
\]
So
\[
\| \beta_n \|_{L^p_2} 
	\leq 
		\| (\beta_n)_h\|_{L^p_2}
		+ 
		\frac{3}{2} \| q(\phi_n) \|_{L^p_1}
	\leq 
	C
\]
We have used here the fact that $G \colon L^p_1 \to L^p_3$ is bounded and also the fact that $(\beta_n)_h$ is uniformly bounded in any norm we choose.

To show $a_n$ is uniformly bounded in $L^p_2$ we argue just as for the $L^p_1$ bound on $a_n$, but with the new information that $F_{A_n} = q(\phi_n)_2$ is now known to be bounded in $L^p_1$. This, together with the $L^p_2$ elliptic estimate for $\diff + \diff^*$ shows $\|a_n\|_{L^p_2} \leq C$.

At this point we have $\phi_n, a_n, \beta_n$ all bounded in $L^p_2$ and we can iterate the whole discussion to obtain uniform bounds in $L^p_k$ for any choice of $k$. From here we can pass to a subsequence which converges in $L^p_k$ for every $k$ and so converges in $C^{\infty}$. 
\end{proof}

We finish this section by showing that together $\beta$ and $\phi$ satisfy a partial differential inequality of the kind which appears in $\epsilon$-regularity results.

\begin{proposition}\label{PD-inequality}
There is a constant $C>0$ such that for any solutions $(\phi, A, \beta)$ to the equations~\eqref{8D-Dirac} and~\eqref{8D-curvature} we have
\[
\Delta \left( |\beta|^2 + |\phi|^2 \right)
 	\leq
		C \left(|\beta|^2 + |\phi|^2 + |\phi|^2|\beta|^2\right)
		-
		\frac{7}{8}|\phi|^4
\]
(Again, recall $\Delta$ is the non-negative Laplacian here.)
\end{proposition}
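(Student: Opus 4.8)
The plan is to establish two pointwise differential inequalities --- one for $\Delta|\phi|^2$ and one for $\Delta|\beta|^2$ --- and then to add them, arranging the constants so that an unfavourable gradient term appearing in the $\beta$-inequality is exactly cancelled by a favourable gradient term in the $\phi$-inequality.

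First I would revisit the computation in the proof of Lemma~\ref{C0-estimate-phi}, this time \emph{retaining} the term $-|\nabla_{A,\beta}\phi|^2$ that was discarded there. Starting from the Bochner identity~\eqref{Laplacian-nabla} with $\nabla=\nabla_{A,\beta}$, using the Dirac equation~\eqref{8D-Dirac} to kill $D_{A,\beta}^{*}D_{A,\beta}\phi$, the Weitzenb\"ock formula~\eqref{another-Weitzenbock-formula}, the identity $\langle E_\phi(\phi),\phi\rangle=\tfrac78|\phi|^4$, the bound $|s|\le C$, and the pointwise estimate $|Q(\beta)(\psi)|\le C|\beta|^2|\psi|$ (valid since $Q(\beta)$ is algebraic and quadratic in $\beta$, by Proposition~\ref{Weitzenbock-proposition}), one obtains
\[
\Delta|\phi|^2 \;\le\; C\bigl(1+|\beta|^2\bigr)\,|\phi|^2 \;-\; \tfrac78|\phi|^4 \;-\; 2\,|\nabla_{A,\beta}\phi|^2 .
\]

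For the $\beta$-inequality I would use the Bochner identity for the Levi--Civita connection acting on $3$-forms, $\tfrac12\Delta|\beta|^2 = \langle\nabla^{*}\nabla\beta,\beta\rangle-|\nabla\beta|^2$. Since $(M,g)$ is fixed and compact, the Weitzenb\"ock curvature term relating $\nabla^{*}\nabla$ to the Hodge Laplacian is bounded, so $\langle\nabla^{*}\nabla\beta,\beta\rangle\le\langle\Delta\beta,\beta\rangle+C|\beta|^2$. Now~\eqref{Delta-beta} (which holds because $(\phi,A,\beta)$ solves~\eqref{8D-curvature}) gives $\Delta\beta=-\tfrac{i}{2}\diff q(\phi)_2-*\diff q(\phi)_4$, hence $|\Delta\beta|\le C|\nabla q(\phi)|$. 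Because $q(\phi)$ is obtained from $E_\phi$ by a parallel bundle isomorphism and $E_\phi$ depends quadratically on $\phi$ (see~\eqref{Ephi}), we have $|\nabla q(\phi)|\le C|\phi|\,|\nabla_A\phi|$; and since $\nabla_A=\nabla_{A,\beta}-B$ with $|B|\le C|\beta|$ pointwise, this yields $|\Delta\beta|\le C|\phi|\bigl(|\nabla_{A,\beta}\phi|+C|\beta|\,|\phi|\bigr)$. Therefore $|\langle\Delta\beta,\beta\rangle|\le C|\phi|\,|\beta|\,|\nabla_{A,\beta}\phi|+C|\phi|^2|\beta|^2$, and Young's inequality, applied with the weight chosen so that the gradient term carries coefficient at most $2$, gives
\[
\Delta|\beta|^2 \;\le\; 2\,|\nabla_{A,\beta}\phi|^2 \;+\; C\bigl(|\phi|^2|\beta|^2+|\beta|^2\bigr).
\]

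Adding the two displayed inequalities, the $\pm2|\nabla_{A,\beta}\phi|^2$ terms cancel, leaving
\[
\Delta\bigl(|\phi|^2+|\beta|^2\bigr)\;\le\; C\bigl(|\phi|^2+|\beta|^2+|\phi|^2|\beta|^2\bigr)-\tfrac78|\phi|^4,
\]
which is the claim. The one place that genuinely needs care --- and the only real obstacle --- is the bookkeeping of the coefficient of $|\nabla_{A,\beta}\phi|^2$: the term produced by Young's inequality in the $\beta$-estimate must not exceed the $2\,|\nabla_{A,\beta}\phi|^2$ available (with the correct sign) from keeping the gradient term in the Bochner identity for $|\phi|^2$. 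Since the cross term to be absorbed has size only $C|\phi|\,|\beta|\,|\nabla_{A,\beta}\phi|$ and there is no constraint on the magnitude of the resulting $|\phi|^2|\beta|^2$ contribution, this is always achievable; the remaining steps are routine pointwise Cauchy--Schwarz estimates together with quotation of~\eqref{another-Weitzenbock-formula} and~\eqref{Delta-beta}.
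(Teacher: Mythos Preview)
Your proposal is correct and follows essentially the same route as the paper: a Bochner identity for $|\phi|^2$ (retaining the favourable $-|\nabla_{A,\beta}\phi|^2$ term), a Bochner identity for $|\beta|^2$ combined with~\eqref{Delta-beta} and the chain $|\Delta\beta|\le C|\nabla q(\phi)|\le C|\phi||\nabla_A\phi|\le C|\phi|(|\nabla_{A,\beta}\phi|+|\beta||\phi|)$, then Young's inequality to make the gradient terms cancel upon addition. The only cosmetic difference is the coefficient you carry on $|\nabla_{A,\beta}\phi|^2$ (you use $\pm 2$, the paper uses $\pm\tfrac12$), which is immaterial since Young's inequality can be weighted either way.
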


\begin{proof}
We recall the Bochner formula for the Hodge Laplacian which gives
\[
\frac{1}{2} \Delta |\beta|^2
	=
		\left\langle \Delta \beta, \beta \right\rangle
		-
		|\nabla \beta|^2
		+
		R(\beta)
\]
where $\Delta=\diff^*\diff + \diff \diff^*$ is the Hodge Laplacian, $\nabla$ is the Levi-Civita connection and $R(\beta)$ is a certain contraction of the Riemannian curvature tensor and $\beta$ (whose precise form will not matter in what follows). This implies that
\[
\frac{1}{2} \Delta |\beta|^2
	\leq
		|\Delta \beta| |\beta| + C|\beta|^2
\]
for some constant $C$. 

Now from~\eqref{Delta-beta}, $|\Delta \beta| \leq |\nabla q(\phi)|$.The connection $\nabla_A$ is a spin connection in the sense that it preserves Clifford multiplication: for any form $\gamma$, $\nabla_A c(\gamma) = c (\nabla \gamma)$. Since $q(\phi) = c^{-1}(E_\phi)$ and $E_\phi$ is quadratic in $\phi$ (see~\eqref{Ephi}), there is a constant $C$ such that for any $A\in A$ and $\phi$
\[
|\nabla q(\phi)| \leq C |\nabla_A \phi||\phi|
\]
Next recall that $\nabla_{A,\beta} = \nabla_A + B$ where, in a local frame, 
\[
B = \sum e_j \otimes \left( 
					c(e_j) \otimes c(\beta) + c(\beta)^* \otimes c(e_j)
				\right)
\]
This means that $|\nabla_{A}\phi| \leq |\nabla_{A,\beta} \phi| + C|\beta||\phi|$ and hence
\begin{align}
\Delta |\beta|^2 
	&\leq
		C \left( 
			|\nabla_{A,\beta}\phi| |\phi| |\beta| + |\phi|^2 |\beta|^2 + |\beta|^2
		\right)
		\nonumber\\
	&\leq
			\frac{1}{2} |\nabla_{A,\beta}\phi|^2 
			+
			C\left( 
			1 + |\phi|^2
			\right)|\beta|^2
	\label{Delta-norm-beta}
\end{align}
Here we have used the inequality $ab \leq \frac{1}{2C} a^2 + \frac{C}{2} b^2$ with $a = |\nabla_{A,\beta}\phi|$ and $b = |\phi||\beta|$. 

Meanwhile, in the course of the proof of Lemma~\ref{C0-estimate-phi} (when combining~\eqref{Laplacian-nabla} and~\eqref{rough-Laplacian-bound}) we showed that
\begin{align}
\Delta |\phi|^2
	&\leq
		C(1 + |\beta|^2)|\phi|^2 
		- 
		\frac{7}{8} |\phi|^4 
		- 
		\frac{1}{2}|\nabla_{A,\beta} \phi|^2 
\label{Delta-norm-phi}
\end{align}
(This is slightly stronger than what is stated in the line after~\eqref{rough-Laplacian-bound} where we dropped the term $-|\nabla_{A,\beta} \phi|^2$ from the right-hand side.)
Now adding~\eqref{Delta-norm-beta} and~\eqref{Delta-norm-phi} gives the result.
\end{proof}

Notice that the inequality in Proposition~\ref{PD-inequality} implies immediately that, for some constants $A,B$ we have
\[
\Delta \left(|\phi|^2 + |\beta|^2\right) \leq A \left( |\phi|^2 + |\beta|^2\right)^2 + B\left(|\phi|^2 +|\beta|^2 \right)
\]
This is the shape of inequality, namely $\Delta f \leq Af^2 + Bf$, which normally arises in the context of $\epsilon$-regularity. The standard argument used to prove $\epsilon$-regularity yields our final estimate. 
\begin{corollary}\label{epsilon-regularity}
Let $B_r \subset M$ be a geodesic ball of radius $r$ centred at a point $x$. There exist constants $\epsilon >0$ and $C>0$ such that if 
\[
\int_{B_r} \left( |\phi|^8 + |\beta|^8 \right)\leq \epsilon
\]
then
\[
\|\phi\|_{C^0(B_{r/2})} + \|\beta\|_{C^0(B_{r/2})} \leq Cr^{-2}  \int_{B_r} \left( |\phi|^8 + |\beta|^8 \right)^{1/8}
\]
\end{corollary}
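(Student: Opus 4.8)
Set $f=|\phi|^2+|\beta|^2\ge 0$. By Proposition~\ref{PD-inequality} and the inequality displayed immediately after its proof, $f$ satisfies a scalar differential inequality
\[
\Delta f \;\le\; A f^2 + B f,
\]
with $\Delta$ the non-negative Laplacian and $A,B$ depending only on $(M,g)$ and the $\Spinc$-structure. Since pointwise $|\phi|^8+|\beta|^8\le f^4\le 8(|\phi|^8+|\beta|^8)$ and $|\phi|^2,|\beta|^2\le f$, the corollary follows once we establish the corresponding $\epsilon$-regularity statement for $f$: there exist $\epsilon,C>0$ such that $\int_{B_r}f^4\le\epsilon$ implies $\|f\|_{C^0(B_{r/2})}\le C r^{-2}\big(\int_{B_r}f^4\big)^{1/2}$, after which one takes square roots. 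This is exactly the classical De~Giorgi--Nash--Moser $\epsilon$-regularity setup in dimension $n=8$; the borderline Lebesgue exponent for the zeroth-order coefficient is $n/2=4$, which is the structural reason the hypothesis is phrased in terms of $L^8$-norms of $\phi$ and $\beta$. To pin down the power of $r$ cleanly I would first rescale to the unit ball: writing $\tilde f(\tilde x)=r^2 f(\exp_x(r\tilde x))$ in geodesic normal coordinates, the hypothesis $\int_{B_1}\tilde f^4=\int_{B_r}f^4\le\epsilon$ is scale invariant, the metric distortion is absorbed into the constant $A$ uniformly for $r$ below a fixed $r_0$, and the linear coefficient becomes $B r^2\le Br_0^2$. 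So it suffices to run the argument once on $B_1$ (with constants independent of $r\le r_0$) and translate back; the case $r\ge r_0$ is harmless.

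The proof on $B_1$ has two stages. \emph{Stage 1: cross the integrability threshold using the smallness.} Fix radii $\tfrac12\le\rho'<\rho\le 1$ and a cutoff $\eta$ with $\eta\equiv 1$ on $B_{\rho'}$, $\operatorname{supp}\eta\subset B_\rho$, $|\nabla\eta|\le C/(\rho-\rho')$. After the usual truncation $f_N=\min(f,N)$, test the inequality against $\eta^2 f\,f_N^{2\gamma-2}$ with $\gamma=2$ and integrate by parts. The only dangerous term on the right is $A\int\eta^2 f^{2\gamma+1}$-type; writing it (on the truncation) as $A\int\big(\eta f^{\gamma}\big)^2 f$ and applying H\"older to pull out $\|f\|_{L^4(B_1)}\le\epsilon^{1/4}$, followed by the Sobolev embedding $W^{1,2}\hookrightarrow L^{8/3}=L^{2n/(n-2)}$, bounds it by $C A\,\epsilon^{1/4}\big(\|\nabla(\eta f^{\gamma})\|_{L^2}^2+\|\eta f^{\gamma}\|_{L^2}^2\big)$. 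Choosing $\epsilon$ small enough — depending only on $A$, the Sobolev constant, and the fixed exponent $\gamma=2$ — lets us absorb the gradient term into the left-hand side; letting $N\to\infty$ and invoking Sobolev once more yields $\|f\|_{L^{16/3}(B_{\rho'})}\le \tfrac{C}{\rho-\rho'}\|f\|_{L^4(B_1)}$ (the lower-order term from $Bf$ being controlled the same way). Since $16/3>4=n/2$, we have gained integrability strictly past the borderline.

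\emph{Stage 2: finish by the standard linear Moser iteration.} On $B_{3/4}$ rewrite the inequality as $\Delta f\le Vf$ with $V=Af+B\in L^{16/3}(B_{3/4})$, where $\|V\|_{L^{16/3}(B_{3/4})}\le C(1+\|f\|_{L^{16/3}(B_{3/4})})\le C$ by Stage~1 (with $\epsilon$ now fixed). The textbook local boundedness estimate for subsolutions of $\Delta f\le Vf$ with $V\in L^q$, $q>n/2$ — proved by Moser iteration, which now closes because $V$ is a genuine coefficient and no absorption against a growing power of $f$ is required — gives $\|f\|_{L^\infty(B_{1/2})}\le C\big(1+\|V\|_{L^{16/3}(B_{3/4})}\big)^{\alpha}\|f\|_{L^2(B_{3/4})}\le C\|f\|_{L^4(B_1)}$ for a universal exponent $\alpha$. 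Translating back through the rescaling produces $\|f\|_{C^0(B_{r/2})}\le Cr^{-2}\big(\int_{B_r}f^4\big)^{1/2}$, and taking square roots together with $|\phi|^2,|\beta|^2\le f$ and $f^4\le 8(|\phi|^8+|\beta|^8)$ gives the stated estimate.

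The main obstacle — the only point that is not a routine invocation of standard machinery — is Stage~1: one must check that the quadratic term $Af^2$ can genuinely be absorbed using \emph{only} the scale-invariant smallness $\int_{B_r}f^4\le\epsilon$ (rather than an a~priori $L^\infty$ or $L^{q}$, $q>4$, bound). This works precisely because $n=8$ makes $f^4=f^{n/2}$ the absorbable, scale-invariant quantity, and because Proposition~\ref{PD-inequality} delivers an inequality of exactly the form $\Delta f\le Af^2+Bf$ with no terms of worse-than-quadratic growth. The remaining care is purely technical: the truncation $f_N$ needed to legitimise the test functions before $L^4$-integrability of $f^{2\gamma+1}$ is known, and the restriction to $r\le r_0$ so that geodesic balls are genuine coordinate balls with uniformly controlled Sobolev constants and metric distortion. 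Everything after Stage~1 is completely standard.
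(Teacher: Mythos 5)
Your argument is correct and is essentially the paper's own route: the paper merely invokes ``the standard $\epsilon$-regularity argument'' for the inequality $\Delta f\le Af^{2}+Bf$ with $f=|\phi|^{2}+|\beta|^{2}$ coming from Proposition~\ref{PD-inequality}, and your two-stage Moser scheme (absorption of the quadratic term via the scale-invariant $L^{n/2}=L^{4}$ smallness, then linear iteration with $V=Af+B\in L^{16/3}$, $16/3>n/2$) is exactly that argument written out, with the correct rescaling $\tilde f=r^{2}f$. The only blemish is exponent bookkeeping at the end: the rescaled estimate gives $\|f\|_{C^{0}(B_{r/2})}\le Cr^{-2}\bigl(\int_{B_r}f^{4}\bigr)^{1/4}$, not $\bigl(\int_{B_r}f^{4}\bigr)^{1/2}$, and taking square roots then yields $\|\phi\|_{C^{0}(B_{r/2})}+\|\beta\|_{C^{0}(B_{r/2})}\le Cr^{-1}\bigl(\int_{B_r}(|\phi|^{8}+|\beta|^{8})\bigr)^{1/8}$, which is slightly stronger than, and for $r\le r_{0}$ (larger $r$ being absorbed into $C$, as you note) implies, the stated $r^{-2}$ bound.
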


\section{An energy identity}\label{energy-identity}

In 4-dimensions, on a compact manifold with $\Spinc$-structure, solutions $(A,\phi)$ of the Seiberg--Witten equations are absolute minima of the following energy functional:
\[
\E(A,\phi)
	=
		\int |\nabla_A \phi|^2 + \frac{1}{2}|F_A|^2 + \frac{1}{8}\left( |\phi|^2 + s\right)^2 
		\dvol	
\]
In this section we prove an analogous result for solutions $(A,\beta,\phi)$ to the 8-dimensional Seiberg--Witten equations. (Similar results hold in all dimensions, with different constants appearing in various places; we work in dimension~8 to keep the discussion concrete.)

\begin{definition}
Let $M$ be an oriented Riemannian 8-manifold with a $\Spinc$-structure. Given a connection $A \in \A$, a 3-form $\beta \in \Omega^3$ and a spinor $\phi \in \Gamma(S_+)$ we define the energy of $(A,\beta,\phi)$ to be 
\[
\E(A,\beta,\phi)
	=
		\int |\nabla_{A,\beta}\phi|^2
		+
		2|F_A|^2 + 8| \diff^*\beta|^2
		+
		8|d \beta|^2 
		+
		\frac{7}{32}\left(
		|\phi|^2 + \frac{4 s}{7} \right)^2
		-
		2|c(\beta)\phi|^2
		- 
		4 |\beta|^2 |\phi|^2
\]
(Here $s$ is the scalar curvature of $M$ and $\nabla_{A,\beta}$ is the connection in $S_+$ determined by $A$ and $\beta$ which appears in Proposition~\ref{4m-dim-Weitzenbock-proposition}.)
\end{definition}

\begin{theorem}\label{energy-minima}
Let $M$ be an oriented Riemannian 8-manifold with a $\Spinc$-structure. For any $(A,\beta,\phi)$, we have the inequality
\[
\E(A,\beta,\phi) \geq \frac{1}{14}\int s^2
\]
with equality if and only if $(A,\beta,\phi)$ solve the 8-dimensional Sieberg--Witten equations. 
\end{theorem}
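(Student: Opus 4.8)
The plan is to rewrite $\E(A,\beta,\phi)$ as a sum of manifestly non-negative $L^2$-quantities plus the fixed geometric term $\frac1{14}\int_M s^2$, so that the inequality is immediate and equality forces each square to vanish. Concretely, I expect to prove the identity
\[
\E(A,\beta,\phi)
=
\int_M |D_{A,\beta}\phi|^2
+
2\bigl|F_A + 2i\diff^*\beta - q(\phi)_2\bigr|^2
+
4\bigl|2\diff^+\beta - q(\phi)_4\bigr|^2
\;+\;
\frac1{14}\int_M s^2 ,
\]
with $q(\phi)=q(\phi)_2+q(\phi)_4$, $q(\phi)_2\in i\Omega^2$, $q(\phi)_4\in\Omega^4_+$ as in Lemma~\ref{beta-Green}. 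The first term vanishes exactly when the Dirac equation~\eqref{8D-Dirac} holds, and the two curvature squares vanish exactly when~\eqref{8D-curvature} holds (it splits into its $i\Omega^2$- and $\Omega^4_+$-components), so the theorem follows at once from this identity.

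The first step is to integrate the Weitzenb\"ock formula of Proposition~\ref{4m-dim-Weitzenbock-proposition}(1) (with $m=2$) against $\phi$, giving
\[
\int_M |D_{A,\beta}\phi|^2
=
\int_M |\nabla_{A,\beta}\phi|^2 + \frac s4 |\phi|^2 + \frac12\bigl\langle c(F_A + F_\beta^+ + C_\beta)\phi,\phi\bigr\rangle + \bigl\langle Q(\beta)\phi,\phi\bigr\rangle ,
\]
and to use Lemma~\ref{Q-8D} to identify $\bigl\langle Q(\beta)\phi,\phi\bigr\rangle=-2|c(\beta)\phi|^2-4|\beta|^2|\phi|^2$ --- precisely the last two terms of $\E$. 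Substituting, the block $\int_M|\nabla_{A,\beta}\phi|^2-2|c(\beta)\phi|^2-4|\beta|^2|\phi|^2$ of $\E$ equals $\int_M|D_{A,\beta}\phi|^2-\frac s4|\phi|^2-\frac12\langle c(F_A+F_\beta^++C_\beta)\phi,\phi\rangle$. Expanding $\frac{7}{32}(|\phi|^2+\tfrac{4s}{7})^2=\frac{7}{32}|\phi|^4+\frac s4|\phi|^2+\frac1{14}s^2$, the middle term cancels the $-\frac s4|\phi|^2$ just produced, leaving $\frac1{14}\int_M s^2$ and a residual $\frac{7}{32}|\phi|^4$ to be absorbed.

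It then remains to show that $\int_M\bigl(2|F_A|^2+8|\diff^*\beta|^2+8|\diff\beta|^2-\frac12\langle c(F_A+F_\beta^++C_\beta)\phi,\phi\rangle+\frac{7}{32}|\phi|^4\bigr)$ equals the two curvature squares above. For this I would use three ingredients: (i) $\int_M\langle F_A,i\diff^*\beta\rangle=\int_M\langle\diff F_A,\beta\rangle=0$, which turns $\int_M 2|F_A|^2+8|\diff^*\beta|^2$ into $\int_M 2|F_A+2i\diff^*\beta|^2$; (ii) $\int_M\diff\beta\wedge\diff\beta=0$ by Stokes, i.e. $\int_M|\diff^+\beta|^2=\int_M|\diff^-\beta|^2$, so $\int_M 8|\diff\beta|^2=\int_M 16|\diff^+\beta|^2$; and (iii) the algebraic identity $\langle c(\eta)\phi,\phi\rangle=\langle c(\eta),E_\phi\rangle_{\mathrm{HS}}$ for $\eta$ in the domain of~\eqref{4m-Clifford-isomorphism} (valid since $c(\eta)$ is then trace-free and $\phi\otimes\bar\phi=E_\phi+\frac18|\phi|^2\Id$), combined with the conformality of $c$ on the two degree-summands $i\Omega^2$ and $\Omega^4_+$ and the standard computations $\langle E_\phi\phi,\phi\rangle=\frac78|\phi|^4$, $|E_\phi|^2_{\mathrm{HS}}=\frac78|\phi|^4$ (here $S_+$ has rank $8$). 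Ingredient (iii) produces the cross terms $-4\langle F_A+2i\diff^*\beta,q(\phi)_2\rangle-16\langle\diff^+\beta,q(\phi)_4\rangle$ needed to complete the squares, and, crucially, the pointwise identity $\frac{7}{32}|\phi|^4=2|q(\phi)_2|^2+4|q(\phi)_4|^2$ (obtained from $|E_\phi|^2_{\mathrm{HS}}=\frac78|\phi|^4$ via the two Clifford normalization constants), so that nothing is left over after completing the square.

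The main obstacle is the bookkeeping in (iii): the Clifford map is conformal with \emph{different} constants on $i\Omega^2$ and on $\Omega^4_+$ (one must also use Remark~\ref{self-dual-action}, that on $S_+$ a $4$-form acts through its self-dual part), and it is exactly these constants that pin down the coefficients $2,8,8,\tfrac{7}{32},\tfrac{4s}{7}$ appearing in the definition of $\E$. Once this is in hand, the displayed identity holds, and since its three square terms vanish simultaneously if and only if $D_{A,\beta}\phi=0$ and $F_A+2i\diff^*\beta+2\diff^+\beta=q(\phi)$, the equality case is automatic.
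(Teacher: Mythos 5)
Your proposal is correct and follows essentially the same route as the paper: integrate the Weitzenb\"ock formula of Proposition~\ref{4m-dim-Weitzenbock-proposition} with $Q(\beta)$ from Lemma~\ref{Q-8D}, convert the curvature pairing via the inner-product relations (the content of Lemma~\ref{innerpoduct-relations}, including the constants $8$ and $16$ and the identity $2|q(\phi)_2|^2+4|q(\phi)_4|^2=\tfrac{7}{32}|\phi|^4$), use $L^2$-orthogonality of $F_A$ and $i\diff^*\beta$ together with Stokes for $\int|\diff^+\beta|^2=\tfrac12\int|\diff\beta|^2$, and complete the squares to obtain exactly the identity $\E(A,\beta,\phi)-\tfrac1{14}\int s^2=\int|D_{A,\beta}\phi|^2+2|F_A+2i\diff^*\beta-q(\phi)_2|^2+4|2\diff^+\beta-q(\phi)_4|^2$. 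The equality analysis is likewise the same, so no changes are needed.
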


The proof takes up the rest of this section. We begin with the following lemma.  

\begin{lemma}\label{innerpoduct-relations}
On an 8-dimensional manifold, let $i\omega \in i\Lambda^2$ and $\theta \in \Lambda^4_+$, Then for any $\phi \in S_+$, 
\begin{align*}
\left\langle c(i\omega)\phi, \phi \right\rangle
	&=
		8 \left\langle i\omega, q(\phi)_2 \right\rangle\\
\left\langle c(\theta),\phi, \phi \right\rangle
	&=
		16 \left\langle \theta, q(\phi)_4 \right\rangle
\end{align*}
Here we write $q(\phi) =  q(\phi)_2 + q(\phi)_4$ for the degree~2 and~4 parts of the differential form $q(\phi)$. The inner-products on the left are in $S_+$ whilst on the right they are in~$\Lambda^*$. 
\end{lemma}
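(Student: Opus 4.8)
The plan is to reduce both identities to the defining relation for $q$, namely that $c(q(\phi))$ equals the trace-free endomorphism $E_\phi$ of $S_+$, and then to exploit the fact that the Clifford isomorphism~\eqref{4m-Clifford-isomorphism} is (up to a universal scalar in each degree) an isometry onto $i\su(S_+)$ with respect to the Hilbert--Schmidt inner product. First I would recall that for any trace-free Hermitian endomorphism $T$ of $S_+$ written as $T = c(\gamma)$ with $\gamma = i\omega + \theta + \cdots$ of inhomogeneous even degree, the component $\gamma_k$ of a fixed degree $k$ is recovered by $\langle \gamma_k, \eta_k\rangle_{\Lambda^*} = \lambda_k^{-1}\,\langle c(\gamma), c(\eta_k)\rangle_{\mathrm{HS}} = \lambda_k^{-1}\,\tr\!\big(c(\gamma)\,c(\eta_k)^*\big)$ for a test form $\eta_k$, where $\lambda_k$ is the constant such that $\langle c(\alpha),c(\beta)\rangle_{\mathrm{HS}} = \lambda_k\langle \alpha,\beta\rangle$ on $k$-forms acting on $S_+$; the various degrees are mutually orthogonal in $i\su(S_+)$ because Clifford products of forms of different degree have vanishing trace. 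So the first real task is to pin down $\lambda_2$ and $\lambda_4$ for $S_+$ in dimension~$8$, i.e. the rank-$8$ half-spinor representation.

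The cleanest way to compute $\lambda_k$ is to evaluate $\tr\big(c(e_{I})c(e_{I})^*\big)$ for a single basis $k$-form $e_I = e_{i_1}\wedge\cdots\wedge e_{i_k}$ with $i_1<\cdots<i_k$: since $c(e_I)$ is, up to a sign and a power of $i$, unitary, $c(e_I)c(e_I)^* = \Id$, giving trace $\dim S_+ = 8$; meanwhile $\langle e_I, e_I\rangle = 1$, so in fact $\lambda_k = 8$ for every $k$ on $S_+$ in dimension $8$. Then for $T = E_\phi$ and $\eta_k$ a real $k$-form we get $\langle q(\phi)_k, \eta_k\rangle = \tfrac18\,\tr\big(E_\phi\, c(\eta_k)^*\big)$. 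Taking $\eta_2 = \omega$ (so the relevant endomorphism is $c(i\omega)$, skew-Hermitian, with $c(i\omega)^* = -c(i\omega) = c(i\omega)$ after the sign bookkeeping built into the statement) and $\eta_4 = \theta \in \Lambda^4_+$, and using $\tr(E_\phi\, S) = \langle S^*\phi,\phi\rangle$ for the rank-one-plus-scalar structure of $E_\phi$ — concretely $\tr(E_\phi S) = \langle S\phi,\phi\rangle - \tfrac1r|\phi|^2\tr S = \langle S\phi,\phi\rangle$ when $\tr S = 0$ — I would land on $\langle i\omega, q(\phi)_2\rangle = \tfrac18\langle c(i\omega)\phi,\phi\rangle$ and likewise for degree~$4$. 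Rearranging gives exactly the stated $\langle c(i\omega)\phi,\phi\rangle = 8\langle i\omega, q(\phi)_2\rangle$; the factor $16$ rather than $8$ in the degree-$4$ line must come from the fact that $\Lambda^4_+$ is the image of all of $\Lambda^4$ acting on $S_+$ (since $c(*\theta) = c(\theta)$ on $S_+$ for middle-degree forms, by Remark~\ref{self-dual-action}), so a self-dual test form $\theta$ effectively pairs against $q(\phi)_4$ with a multiplicity of $2$; I would double-check this factor by testing on $\theta = $ a single self-dual combination $e_I + *e_I$ and tracking the normalization of $\Lambda^4_+ \hookrightarrow \Lambda^4$.

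The main obstacle I anticipate is precisely this normalization of the degree-$4$ pairing: one must be careful whether $q(\phi)_4$ is regarded as an element of $\Lambda^4_+$ with its induced inner product or as an element of $\Lambda^4$, and whether the self-duality projector introduces a factor of $2$ or $\tfrac12$ — getting this wrong is what would turn a $16$ into an $8$ or a $32$. A secondary point requiring care is the sign/phase convention: $c(i\omega)$ is self-adjoint (the statement only makes sense with $\langle c(i\omega)\phi,\phi\rangle$ real), so I need the identity $s_2 c(\omega) = i\,c(\omega)$ being self-adjoint from the paper's conventions, consistent with $E_\phi$ being Hermitian; I would verify that $c(i\omega)^* = c(i\omega)$ and $c(\theta)^* = c(\theta)$ on $S_+$ using Lemma~\ref{Clifford-Hodge} before identifying traces with inner products against $\phi$. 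Once the two constants $8$ and $16$ are confirmed, the lemma follows immediately by linearity in $\omega$ and $\theta$.
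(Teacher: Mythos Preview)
Your approach is correct and shares its overall structure with the paper's proof: both reduce the lemma to determining the constants $a_1,a_2$ for which $\langle c(\cdot),c(\cdot)\rangle_{\mathrm{HS}}=a_k\langle\cdot,\cdot\rangle_{\Lambda^*}$ on $i\Lambda^2$ and $\Lambda^4_+$ respectively, and then use $\tr(E_\phi\,S)=\langle S\phi,\phi\rangle$ for trace-free $S$. Where you differ is in how the constants are found. The paper invokes the real structure on $S_+$ in dimension~8 to split $i\su(S_+)$ into two irreducible $\Spin(8)$-summands, applies Schur's lemma to guarantee a single scaling constant on each, and then evaluates on a K\"ahler example (the actions of $i\omega$ and $\omega^2$ for the K\"ahler form $\omega$) to read off $a_1=8$ and $a_2=16$. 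Your route is more elementary: you compute $\tr\bigl(c(e_I)c(e_I)^*\bigr)=\dim S_+=8$ directly on basis forms, bypassing both Schur's lemma and the K\"ahler computation. The price is exactly the normalisation issue you flag: the constant $8$ is correct on $\Lambda^4$, but the map $c\colon\Lambda^4\to\End(S_+)$ has kernel $\Lambda^4_-$, so on $\Lambda^4_+$ the constant doubles to $16$; your proposed test on $\theta=e_I+*e_I$ confirms this, since $c(e_I+*e_I)=2c(e_I)$ on $S_+$ gives $|c(\theta)|^2_{\mathrm{HS}}=32$ against $|\theta|^2_{\Lambda^4}=2$. Your method has the advantage of needing no representation theory; the paper's has the advantage that both constants drop out of a single concrete example without any self-duality bookkeeping.
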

\begin{proof}
Recall that Clifford multiplication gives an isomorphism
\[
c \colon i\Lambda^2 \oplus \Lambda^4_+ \to i\su(S_+)
\]
In dimension~8, the bundle $S_+$ carries a real structure. Accordingly,  $i\su(S_+) = V_1 \oplus V_2$ splits into those endomorphisms which commute with the real structure and those which anti-commute. These correspond under $c$ to the action of $\Lambda^4_+$ and  $i\Lambda^2$ respectively. The maps $c \colon i\Lambda^2 \to V_1$ and $c \colon \Lambda^4_+ \to V_2$ now correspond to isomorphisms of \emph{irreducible} $\Spin(8)$ representations. It follows from Schur's Lemma that there are constants $a_1,a_2$ such that for all $i\omega \in i\Lambda^2$ and $\theta \in \Lambda^4_+$,
\begin{align*}
|c(i\omega)|^2 &= a_1|\omega|^2\\
|c(\theta)|^2 & = a_2 |\theta|^2
\end{align*}

To find $a_1, a_2$ we need to just compute examples. One such example is provided by a Kähler manifold with Kähler form $\omega$. We have $S_+ = \Omega^{0,0}\oplus \Omega^{0,2} \oplus \Omega^{0,4}$. With respect to this splitting,
\[
c(i\omega)
	=
		\begin{pmatrix}
		4 & 0 & 0\\
		0 & 0 & 0\\
		0 & 0 & -4
		\end{pmatrix},\qquad
c(\omega^2)
	=
		\begin{pmatrix}
		-12 & 0 & 0\\
		0 & 4 & 0\\
		0 & 0 & -12
		\end{pmatrix}
\]
From this we see that
\begin{align*}
\tr \left( c(i\omega)^2\right) &= 8 |\omega|^2\\
\tr \left( c(\omega^2)^2\right) &= 16 |\omega^2|^2
\end{align*}
So $a_1 = 8$ and $a_2 = 16$. 

We can now compute
\begin{align*}
\left\langle \theta, q(\phi)_4 \right\rangle
	&=
		\frac{1}{16} \left\langle c(\theta), E_\phi \right\rangle\\
	&=	
		\frac{1}{16} \tr \left( c(\theta) \circ \phi^* \otimes \phi \right)\\
	&=
		\frac{1}{16}
			\sum \left\langle e_i ,\phi  \right\rangle \left\langle c(\theta)(\phi), e_i  \right\rangle\\
	&=
		\frac{1}{16} \left\langle c(\theta)(\phi), \phi \right\rangle
\end{align*}
as claimed. (In the first line, we use the fact that $a_2= 16$, so that $c$ scales the inner-product by $16$. In the second line we use the fact that $c(\theta)$ is trace-free and so we can replace $E_\phi$ by $\phi^*\otimes \phi$; recall~\eqref{Ephi} for $E_\phi$.)  The proof for $i\omega$ is similar.
\end{proof}

\begin{proof}[Proof of Theorem~\ref{energy-minima}]
Recall the Weitzenb\"ock formula of Proposition~\ref{4m-dim-Weitzenbock-proposition} which (together with the formula of Lemma~\ref{Q-8D} for $Q(\beta)$) implies
\[
\int |D_{A,\beta} \phi|^2
=
		\int |\nabla_{A,\beta}\phi|^2
		+
		\frac{s}{4}|\phi|^2
		+
		\frac{1}{2}\left \langle c(F_A+ 2i \diff^*\beta +2d^+ \beta)(\phi), \phi \right\rangle
		-
		2 |c(\beta)(\phi)|^2
		-
		4 |\beta|^2 |\phi|^2
\]
By Lemma~\ref{innerpoduct-relations}, 
\[
\frac{1}{2}\left \langle c(F_A+ 2i \diff^*\beta +2d^+ \beta)(\phi), \phi \right\rangle
=
4 \left\langle 
F_A + 2i \diff^*\beta, q(\phi)_2
\right\rangle
+
16 \left\langle 
 \diff^+\beta, q(\phi)_4
 \right\rangle
\]
So, by completing the square we obtain
\begin{multline*}
\int |D_{A,\beta} \phi|^2
+
2 |F_A + 2i\diff^*\beta - q(\phi)_2|^2
+
4 |2d^+\beta - q(\phi)_4|^2
\\
	=
		\int  |\nabla_{A,\beta}\phi|^2
		+
		\frac{s}{4}|\phi|^2 
		+
		2 |F_A+2i\diff^*\beta|^2 +2 |q(\phi)_2|^2 \\
		+
		16 |\diff^+\beta|^2 + 4|q(\phi)_4|^2 
		-
		2 |c(\beta)(\phi)|^2
		-
		4 |\beta|^2 |\phi|^2
\end{multline*}
Next we use the fact that $F_A$ and $i\diff^*\beta$ are $L^2$-orthogonal (since $\diff F_A=0$). We also use
\[
0 = \int \diff (\beta \wedge \diff \beta) = \int |\diff^+ \beta|^2 - \int |\diff^-\beta|^2
\]
which shows that $\int |\diff^+\beta|^2 = \frac{1}{2} \int |\diff \beta|^2$. This gives
\begin{multline*}
\int |D_{A,\beta} \phi|^2
+
2 |F_A + 2i\diff^*\beta - q(\phi)_2|^2
+
4 |2d^+\beta - q(\phi)_4|^2
\\
	=
		\int  |\nabla_{A,\beta}\phi|^2
		+
		\frac{s}{4}|\phi|^2 
		+
		2 |F_A|^2+ 8|\diff^*\beta|^2 +2 |q(\phi)_2|^2 \\
		+
		8 |\diff \beta|^2 + 4|q(\phi)_4|^2 
		-
		2 |c(\beta)(\phi)|^2
		-
		4 |\beta|^2 |\phi|^2
\end{multline*}
Now, again by Lemma~\ref{innerpoduct-relations}, and the definition~\ref{Ephi} of $E_\phi$, we have
\[
2|q(\phi)_2|^2 + 4|q(\phi)_4|^2 
	= 
		\frac{1}{4}\left\langle E_\phi (\phi), \phi \right\rangle
	=
		\frac{7}{32}|\phi|^4
\]
So, completing the square on the terms involving $s|\phi|^2$ and $|\phi|^4$ we obtain 
\[
\int |D_{A,\beta} \phi|^2
+
2 |F_A + 2i\diff^*\beta - q(\phi)_2|^2
+
4 |2d^+\beta - q(\phi)_4|^2
	=
		\E(A,\beta,\phi)
		-
		\frac{1}{14} s^2
\]
The left-hand side is non-negative, and equal to zero if and only if $(A,\beta,\phi)$ solve the 8-dimensional Seiberg--Witten equations.
\end{proof}	

\small{
\bibliography{SW-bibliography} 
\bibliographystyle{acm}
}

\end{document}